\newtheorem{theorem}{Theorem}[section]
\newtheorem{lemma}[theorem]{Lemma}
\newtheorem{corollary}[theorem]{Corollary}
\newenvironment{proof}
{\bigskip\noindent{\sc Proof.}\ \ \rm }{\hfill$\Box$\bigskip}
\title{A H\"older--Young--Lieb inequality for norms of Gaussian
Wick products}
\author{Paolo Da Pelo$^1$, Alberto Lanconelli$^1$ \quad and \quad  Aurel I. Stan$^2$}
\date{\empty}
\begin{document}

\maketitle
\begin{center}
{\noindent
\begin{tabular}{cc}
& $^1$Dipartimento di Matematica\\
& Universita' degli Studi di Bari\\
& Via E. Orabona, 4\\
& 70125 Bari - Italia\\
& E-mail: \emph{dapelo@dm.uniba.it}\\
\end{tabular}}
{\noindent
\begin{tabular}{cc}
& $^1$Dipartimento di Matematica\\
& Universita' degli Studi di Bari\\
& Via E. Orabona, 4\\
& 70125 Bari - Italia\\
& E-mail: \emph{lanconelli@dm.uniba.it}\\
\end{tabular}}
{\noindent
\begin{tabular}{cc}
& $^2$Department of Mathematics\\
& Ohio State University at Marion\\
& 1465 Mount Vernon Avenue\\
& Marion, OH 43302, U.S.A.\\
& E-mail: {\em stan.7@osu.edu}\\
\end{tabular}
}
\end{center}

\numberwithin{equation}{section}

\bigskip

\begin{abstract} An important connection between the finite dimensional
Gaussian Wick product and Lebesgue convolution product will be
proven first. Then  this connection will be used to prove an
important H\"older inequality for the norms of Gaussian Wick
products, reprove Nelson hypercontractivity inequality, and prove a
more general inequality whose marginal cases are the H\"older and
Nelson inequalities mentioned before. We will show that there is a
deep connection between the Gaussian H\"older inequality and classic
H\"older inequality, between the Nelson hypercontractivity and
classic Young inequality with the sharp constant, and between the
third more general inequality and an extension by Lieb of the Young
inequality with the best constant. Since the Gaussian probability
measure exists even in the infinite dimensional case, the above
three inequalities can be extended, via a classic Fatou's lemma
argument, to the infinite dimensional framework.
\end{abstract}

Key words and phrases: Wick product, second quantization operator,
convolution product, H\"older inequality, Young inequality,
Lieb inequality, exponential functions.\\

AMS 2000 classification: 44A35, 60H40, 60H10.

\section{Introduction}

If $(\Omega$, ${\mathcal F}$, $P)$ is a probability space and $H$ is
a closed subspace of $L^2(\Omega$, ${\mathcal F}$, $P)$, such that
every element $h$ from $H$ is normally distributed, with mean zero,
then $H$ is called a ({\em centered}) {\em Gaussian Hilbert space}.
If $H$ is a separable Gaussian Hilbert space and ${\mathcal F}(H)$
denotes the smallest sub--sigma algebra of ${\mathcal F}$, with
respect to which all random variables $h$ from $H$ are measurable,
then it was proven in \cite{kss2002} that for any two
complex--valued functions $\varphi$ and $\psi$ in $L^2(\Omega$,
${\mathcal F}(H)$, $P)$, and any two positive numbers $u$ and $v$,
such that $(1/u) + (1/v) = 1$, the Wick product of
$\Gamma((1/\sqrt{u})I)\varphi$ and $\Gamma((1/\sqrt{v}I))\psi$,
denoted by $\Gamma((1/\sqrt{u})I)\varphi \diamond
\Gamma((1/\sqrt{v})I)\psi$, belongs to $L^2(\Omega$, ${\mathcal
F}(H)$, $P)$ and the following inequality holds:
\begin{eqnarray}
\left\|\Gamma\left(\frac{1}{\sqrt{u}}I\right)\varphi \diamond
\Gamma\left(\frac{1}{\sqrt{v}}I\right)\psi\right\|_2 & \leq &
\|\varphi\|_2 \cdot \|\psi\|_2, \label{kuosaitostan}
\end{eqnarray}
where $\Gamma(cI)$ denotes the second quantization operator of $c$
times the identity operator $I$, for any complex constant $c$, and
$\| \cdot \|_2$ the $L^2$--norm. The proof was based on the
orthogonal structure of the space $L^2(\Omega$, ${\mathcal F}(H)$,
$P)$ (Fock decomposition) and Cauchy--Buniakovski--Schwarz
inequality. The authors of \cite{kss2002} regarded inequality
(\ref{kuosaitostan}) as a Young inequality for White Noise Analysis,
thinking that the Wick product, in the Gaussian case, is an analogue
of the convolution product, from the classic Fourier Analysis.
However, after discussing with other mathematicians, they were
convinced that this inequality should be called a H\"older
inequality for White Noise Analysis, since the Wick product is an
analogue of the classic product of two series.
\par The Wick product can be defined not only in
the Gaussian case, but also for any probability measure $\mu$ on
${\mathbb R}$ having finite moments of all orders. In \cite{ls2008}
it was proven that for any probability measure $\mu$, that is not a
delta measure (that means whose support does not reduce to a single
point), if $r \geq 2$ is a fixed number, and $u$ and $v$ are
positive numbers, such that, the inequality
\begin{eqnarray}
\left\|\Gamma\left(\frac{1}{\sqrt{u}}I\right)\varphi \diamond
\Gamma\left(\frac{1}{\sqrt{v}}I\right)\psi\right\|_r & \leq &
\|\varphi\|_r \cdot \|\psi\|_r, \label{rkss}
\end{eqnarray}
holds for any $\varphi$ and $\psi$ in $L^r({\mathbb R}$, $\mu$), we
must have:
\begin{eqnarray}
\frac{1}{u} + \frac{1}{v} & \leq & 1. \label{minimal}
\end{eqnarray}
Therefore, if one can prove inequality (\ref{kuosaitostan}), then it
is the best inequality that he (she) can get not only in the
Gaussian case, but also for all non--trivial probability measures.
\par In \cite{ls2010} it was proven, in the Gaussian case, that if
$u$ and $v$ are positive numbers, such that $(1/u) + (1/v) = 1$,
then any two real valued functions $\varphi$ and $\psi$ in
$L^1(\Omega$, ${\mathcal F}(H)$, $P)$, the Wick product of
$\Gamma((1/\sqrt{u})I)\varphi \diamond \Gamma((1/\sqrt{v})I)\psi$
belongs to $L^1(\Omega$, ${\mathcal F}(H)$, $P)$, and the following
inequality holds:
\begin{eqnarray}
\left\|\Gamma\left(\frac{1}{\sqrt{u}}I\right)\varphi \diamond
\Gamma\left(\frac{1}{\sqrt{v}}I\right)\psi\right\|_1 & \leq &
\|\varphi\|_1 \cdot \|\psi\|_1. \label{lanconellistan1}
\end{eqnarray}
In \cite{ls2010} it was also proven, in the Gaussian case, that for
any two real valued functions $\varphi$ and $\psi$ in
$L^{\infty}(\Omega$, ${\mathcal F}(H)$, $P)$, and any two positive
numbers $u$ and $v$, such that $(1/u) + (1/v) = 1$, the Wick product
of $\Gamma((1/\sqrt{u})I)\varphi \diamond \Gamma((1/\sqrt{v})I)\psi$
belongs to $L^{\infty}(\Omega$, ${\mathcal F}(H)$, $P)$, and the
following inequality holds:
\begin{eqnarray}
\left\|\Gamma\left(\frac{1}{\sqrt{u}}I\right)\varphi \diamond
\Gamma\left(\frac{1}{\sqrt{v}}I\right)\psi\right\|_{\infty} & \leq &
\|\varphi\|_{\infty} \cdot \|\psi\|_{\infty}.
\label{lanconellistan2}
\end{eqnarray}
To prove the inequalities (\ref{lanconellistan1}) and
(\ref{lanconellistan2}), the authors proved first a Jensen
inequality for Gaussian Wick products, inspired by the Jensen
inequality from \cite{jan97}.
\par In this paper, we will prove first a lemma that connects the
Gaussian Wick product to the classic convolution product of the
Lebesgue measure. We will then use this lemma to prove the following
inequalities.
\par Let $p \in [1$, $\infty]$. If $u$ and $v$ are positive numbers,
such that $(1/u) + (1/v) = 1$, then for any two complex valued
functions $\varphi$ and $\psi$ in $L^p(\Omega$, ${\mathcal F}(H)$,
$P)$, the Wick product $\Gamma((1/\sqrt{u})I)\varphi \diamond
\Gamma((1/\sqrt{v})I)\psi$ belongs to $L^p(\Omega$, ${\mathcal
F}(H)$, $P)$, and the following inequality holds:
\begin{eqnarray}
\left\|\Gamma\left(\frac{1}{\sqrt{u}}I\right)\varphi \diamond
\Gamma\left(\frac{1}{\sqrt{v}}I\right)\psi\right\|_p & \leq &
\|\varphi\|_p \cdot \|\psi\|_p. \label{dls1}
\end{eqnarray}
We will show that via the lemma connecting the Gaussian Wick product
to the Lebesgue convolution product, inequality (\ref{dls1}) reduces
to the classic H\"older inequality.
\par Let $1 < p \leq r < \infty$. Then
for any $\varphi$ in $L^p(\Omega$, ${\mathcal F}(H)$ $P)$ and any
$\psi$ in $L^{\infty}(\Omega$, ${\mathcal F}(H)$ $P)$,
$\Gamma(\sqrt{p - 1}/\sqrt{r - 1})\varphi \diamond \Gamma(\sqrt{r -
p}/\sqrt{r - 1})\psi$ belongs to $L^{r}(\Omega$, ${\mathcal F}(H)$
$P)$ and the following inequality holds:
\begin{eqnarray}
\left\|\left[\Gamma\left(\frac{\sqrt{p - 1}}{\sqrt{r -
1}}\right)\varphi\right] \diamond \left[\Gamma\left(\frac{\sqrt{r -
p}}{\sqrt{r - 1}}\right)\psi\right]\right\|_r & \leq & \|\varphi\|_p
\cdot \|\psi\|_{\infty}. \label{dls2}
\end{eqnarray}
In particular, if we choose $\psi = 1$ (the constant random variable
equal to $1$), the we get the classic Nelson hypercontractivity
inequality:
\begin{eqnarray}
\left\|\Gamma\left(\frac{\sqrt{p - 1}}{\sqrt{r -
1}}\right)\varphi\right\|_r & \leq & \|\varphi\|_p.
\end{eqnarray}
We will show that via the lemma connecting the Gaussian Wick product
to the Lebesgue convolution product, this inequality reduces to the
Young inequality with the best constant proven by Beckner and
Brascamp--Lieb in \cite{b1975} and \cite{bl1976}, respectively.
\par Finally we will prove that if $u$ and $v$ are positive numbers,
such that $(1/u) + (1/v) = 1$, and $p$, $q$, and $r$ are in $[1$,
$\infty]$, such that:
\begin{eqnarray}
\frac{1}{r - 1} & = & \frac{1}{u(p - 1)} + \frac{1}{v(q - 1)},
\label{condition}
\end{eqnarray}
then for any $\varphi$ in $L^p(\Omega$, ${\mathcal F}(H)$, $P)$ and
any $\psi$ in $L^q(\Omega$, ${\mathcal F}(H)$, $P)$,
$\Gamma((1/\sqrt{u})I)\varphi \diamond \Gamma((1/\sqrt{v})I)\psi$
belongs to $L^r(\Omega$, ${\mathcal F}(H)$, $P)$, and the following
inequality holds:
\begin{eqnarray}
\left\|\Gamma\left(\frac{1}{\sqrt{u}}I\right)\varphi \diamond
\Gamma\left(\frac{1}{\sqrt{v}}I\right)\psi\right\|_r & \leq &
\|\varphi\|_p \cdot \|\psi\|_q. \label{dls3}
\end{eqnarray}
We will show that via the lemma connecting the Gaussian Wick product
to the Lebesgue convolution product, this inequality is connected to
the fully generalized Young's inequality proven by Lieb in
\cite{l1990}. See also page 100 of \cite{ll2001}.
\par It is easy to see that (\ref{dls1}) and (\ref{dls2}) are
particular cases of (\ref{dls3}), namely, (\ref{dls3}) reduces to
(\ref{dls1}) in the particular case $p = q = r$, and (\ref{dls3})
reduces to (\ref{dls2}) when $q = \infty$. In fact, condition
(\ref{condition}) tells us that $1/(r - 1)$ is a convex combination
of $1/(p - 1)$ and $1/(q - 1)$, and so, if we assume that $p \leq
q$, then $p \leq r \leq q$. If we fix the left--endpoint $p$, of the
interval $[p$, $q]$, and let the right--endpoint $q$ vary from $p$
to $\infty$, then inequalities (\ref{dls1}) and (\ref{dls2}) are the
``marginal" cases of (\ref{dls3}): $q = p$ and $q = \infty$,
respectively. So, one might say that proving (\ref{dls3}) makes the
proofs of (\ref{dls1}) and (\ref{dls2}) superfluous. However, we
prefer to prove first (\ref{dls1}), then (\ref{dls2}), and finally
(\ref{dls3}), to show how they are connected to the following
important and deep inequalities of classical Analysis: H\"older,
Young with the sharp constant, and Lieb. We will also see that as we
move from inequality (\ref{dls1}) to (\ref{dls3}), the complexity of
the proof increases more and more.
\par All the above inequalities are sharp, the equality occurring for
some exponential functions.
\par In section 2, we present a short background of the theory of
Gaussian Hilbert spaces. This background includes the definition of
the Wick product and second quantization operator of a constant
times the identity. In section 3 we prove the important lemma
connecting the Gaussian Wick product to the Lebesgue convolution
product. Finally in the last section we prove the main results of
this paper.

\section{Background} In this section we present a minimal background
about Gaussian Wick products and second quantization operators. The
frameworks in which this background can be presented are many. One
can use, for example, Hida's White Noise Theory (see \cite{k1996} or
\cite{o1994}), Malliavin Calculus, the multiple Wiener integrals, a
Fock space, or the theory of Gaussian Hilbert Spaces. All of these
theories are leading to the same notion of Wick product. Since we
are not going to use generalized functions, we are going to use the
theory of Gaussian Hilbert Spaces as outlined in \cite{jan97}. \\
\par Let $(\Omega$, ${\mathcal F}$, $P)$ be a probability space and
$H$ a closed subspace of $L^2(\Omega$, ${\mathcal F}$, $P)$, such
that every element $h$ of $H$ is normally distributed with mean
zero. We call $H$ a ({\em centered}) {\em Gaussian Hilbert space}.
We assume that $H$ is separable. For all non--negative integers $n$,
we define the space:
\begin{eqnarray*}
F_n & = & \{f(h_1, \dots, h_d) \mid d \geq 1, h_i \in H, i = 1,
\dots, d, f \ {\rm is} \ {\rm polynomial}, \ deg(f) \leq n\},
\end{eqnarray*}
where $deg(f)$ denotes the degree of the polynomial $f$. Since the
Gaussian random variables have finite moments of all orders, each
space $F_n$ is contained in $L^2(\Omega$, ${\mathcal F}$, $P)$. We
have:
\[
{\mathbb C} = F_0 \subset F_1 \subset F_2 \subset \cdots \subset
L^{2}(\Omega, {\mathcal F}, P).
\]
We define now the following spaces: $G_0 := F_0$ and for all $n \geq
1$,
\begin{eqnarray*}
G_n & := & \bar{F}_n \ominus \bar{F}_{n - 1},
\end{eqnarray*}
where $\bar{F}$ denotes the closure of $F$ in $L^2(\Omega$,
${\mathcal F}$, $P)$, for any subspace $F$ of $L^2(\Omega$,
${\mathcal F}$, $P)$. For each non--negative integer $n$, we call
$G_n$ {\em the $n$--th homogenous chaos space generated by $H$}, and
every element $\varphi$ from $G_n$, a {\em homogenous polynomial
random variable of degree $n$}. We define the following Hilbert
space:
\begin{eqnarray*}
{\mathcal H} & = & \oplus_{n = 0}^{\infty}G_n
\end{eqnarray*}
and call it {\em the chaos space generated by $H$}. Let us observe
that every random variable from ${\mathcal H}$ is measurable with
respect to the sigma--algebra ${\mathcal F}(H)$ generated by the
elements $h$ from $H$. The reciprocal is also true and the following
theorem holds (see Theorem 2.6., from page 18, in \cite{jan97}).
\begin{theorem} \label{funjanson}
\begin{eqnarray}
{\mathcal H} & = & L^2(\Omega, {\mathcal F}(H), P).
\end{eqnarray}
\end{theorem}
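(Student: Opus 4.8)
The plan is to prove the two inclusions separately. The inclusion $\mathcal{H} \subseteq L^2(\Omega, \mathcal{F}(H), P)$ is immediate and has essentially already been noted: each $G_n$ consists of $L^2$-limits of polynomials in the variables $h \in H$, all of which are $\mathcal{F}(H)$-measurable and square integrable, so every element of the orthogonal sum $\mathcal{H} = \oplus_{n} G_n$ is as well. The whole difficulty lies in the reverse inclusion, which I would reformulate as the statement that the orthogonal complement $\mathcal{H}^{\perp}$ inside $L^2(\Omega, \mathcal{F}(H), P)$ is trivial; equivalently, that the polynomial random variables $\bigcup_{n} \bar{F}_n$ are dense. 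Note that $F_n \subseteq \bar{F}_n = G_0 \oplus \cdots \oplus G_n \subseteq \mathcal{H}$, so orthogonality to $\mathcal{H}$ is the same as orthogonality to every polynomial in finitely many elements of $H$.

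To attack this I would first reduce to a finite-dimensional Gaussian problem. Since $H$ is separable, fix an orthonormal basis $(e_k)_{k \geq 1}$ of $H$; the $e_k$ are then i.i.d. standard normal random variables and $\mathcal{F}(H) = \sigma(e_1, e_2, \ldots)$. Set $\mathcal{F}_d := \sigma(e_1, \ldots, e_d)$, so that $\mathcal{F}_d \uparrow \mathcal{F}(H)$. Suppose $\xi \in L^2(\Omega, \mathcal{F}(H), P)$ satisfies $\xi \perp \mathcal{H}$; then $E[\xi \cdot \overline{p(e_1, \ldots, e_d)}] = 0$ for every $d$ and every polynomial $p$. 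Writing $\xi_d := E[\xi \mid \mathcal{F}_d]$, and using that $p(e_1, \ldots, e_d)$ is $\mathcal{F}_d$-measurable, this says precisely that $\xi_d$, viewed as an element of $L^2(\mathbb{R}^d, \gamma_d)$ with $\gamma_d$ the standard $d$-dimensional Gaussian measure, is orthogonal to every polynomial.

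The heart of the argument is then the classical completeness statement: polynomials are dense in $L^2(\mathbb{R}^d, \gamma_d)$. I would prove this by the entire-function method. If $f \in L^2(\gamma_d)$ is orthogonal to all monomials, then because the Gaussian density decays faster than any exponential one has $\int_{\mathbb{R}^d} e^{2\langle b, x\rangle}\, d\gamma_d(x) = e^{2|b|^2} < \infty$ for every $b$, so by Cauchy--Schwarz the transform $z \mapsto \int_{\mathbb{R}^d} f(x)\, e^{\langle z, x\rangle}\, d\gamma_d(x)$ is well defined and, by dominated convergence together with Morera's theorem, entire in $z \in \mathbb{C}^d$. All its Taylor coefficients at the origin are constant multiples of the inner products $\langle f, x^{\alpha}\rangle_{\gamma_d}$, hence vanish, so the transform is identically zero; specializing $z$ to the imaginary axis shows that the Fourier transform of the finite complex measure $f\, d\gamma_d$ vanishes, whence $f = 0$ $\gamma_d$-a.e. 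Applying this with $f = \xi_d$ gives $\xi_d = 0$ for every $d$.

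Finally I would pass to the limit. By the martingale convergence theorem (Lévy's upward theorem), since $\mathcal{F}_d \uparrow \mathcal{F}(H)$ and $\xi$ is $\mathcal{F}(H)$-measurable, $\xi_d = E[\xi \mid \mathcal{F}_d] \to E[\xi \mid \mathcal{F}(H)] = \xi$ in $L^2$. As each $\xi_d = 0$, this forces $\xi = 0$, so $\mathcal{H}^{\perp} = \{0\}$ and $\mathcal{H} = L^2(\Omega, \mathcal{F}(H), P)$. I expect the finite-dimensional completeness step to be the main obstacle: one must exploit the precise Gaussian tail decay to guarantee analyticity of the transform, and it is exactly there that Gaussianity is essential, since for a general measure with all moments finite the associated moment problem may be indeterminate and polynomials need not be dense. (An alternative route to the same completeness fact is to expand the exponentials $\exp(te_k - t^2/2) = \sum_n (t^n/n!) H_n(e_k)$ via the Hermite generating function, showing directly that the exponentials, which are total in $L^2(\gamma_d)$, already lie in $\mathcal{H}$.)
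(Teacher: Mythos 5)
Your proof is correct, but note that the paper itself does not prove this statement at all: it is quoted from the literature (Theorem 2.6, page 18, of Janson's \emph{Gaussian Hilbert Spaces}, reference \cite{jan97}), so there is no internal proof to compare against. What you have written is the standard argument --- and in substance it is Janson's own: reduce to finitely many coordinates by conditioning on ${\mathcal F}_d = \sigma(e_1, \dots, e_d)$, establish completeness of polynomials in $L^2({\mathbb R}^d, \gamma_d)$ via analyticity of the Fourier--Laplace transform $z \mapsto \int f(x) e^{\langle z, x \rangle}\, d\gamma_d(x)$ together with uniqueness of Fourier transforms of finite complex measures, and then pass to the limit by L\'evy's upward martingale convergence theorem. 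All the steps are sound: the transform is well defined and entire because the Gaussian density integrates every exponential, its vanishing Taylor coefficients are exactly the assumed orthogonality to monomials, and restriction to imaginary arguments kills the measure $f\, d\gamma_d$. The one point you pass over silently is that ${\mathcal F}(H)$ equals $\sigma(e_1, e_2, \dots)$ only up to $P$-null sets: a general $h \in H$ is an a.s.\ limit of a subsequence of finite linear combinations of the $e_k$, hence is measurable only with respect to the augmented sigma-algebra. Since $L^2$ spaces and conditional expectations are insensitive to this augmentation, the gap is harmless, but it deserves a sentence. Your closing remark about where Gaussianity enters is apt: for a general probability measure with all moments finite the moment problem can be indeterminate and polynomials need not be dense (the lognormal distribution is the classical example), so the tail-decay estimate guaranteeing analyticity is precisely the non-formal ingredient.
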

From now on, because of Theorem \ref{funjanson}, whatever random
variables we will consider, they will be measurable with respect to
${\mathcal F}(H)$.\\
\par For every $n \geq 0$, we denote by $P_n$ the orthogonal projection
from ${\mathcal H}$ onto $G_n$. The Wick product is defined first
for any two homogenous polynomial random variables, and then
extended in a bilinear way, as explained below. For any
non--negative integers $m$ and $n$, and any $\varphi$ in $G_m$ and
$\psi$ in $G_n$, we define:
\begin{eqnarray}
\varphi \diamond \psi & = & P_{m + n}(\varphi \cdot \psi).
\label{wickdef1}
\end{eqnarray}
If $\varphi = \sum_{n = 0}^{\infty}f_n \in {\mathcal H}$ and $\psi =
\sum_{n = 0}^{\infty}g_n \in {\mathcal H}$, where $f_n$ and $g_n$
are in $G_n$, for all $n \geq 0$, then the {\em Wick product of
$\varphi$ and $\psi$}, denoted by $\varphi \diamond \psi$, is
defined as:
\begin{eqnarray}
\varphi \diamond \psi & = & \sum_{k = 0}^{\infty}\left[\sum_{p + q =
k}(f_p \diamond g_q)\right]. \label{wickdef2}
\end{eqnarray}
Of course, there might be problems with the convergence, in the
$L^2$--sense, of the series from the right--hand side of
(\ref{wickdef2}), but at least for the case when $\varphi$ and
$\psi$ are polynomial random variables (i.e., only finitely many
$f_n$ and $g_n$, $n \geq 1$, are different from zero), the Wick
product $\varphi \diamond \psi$ is well--defined.\\
\par If $c$ is a fixed complex number, then we define {\em the second
quantization operator of} $cI$, where $I$ denotes the identity
operator of $H$, by:
\begin{eqnarray}
\Gamma(cI)\varphi & = & \sum_{n = 0}^{\infty}c^nf_n,
\end{eqnarray}
for all $\varphi = \sum_{n = 0}^{\infty}f_n \in {\mathcal H}$, where
$f_n$ is in $G_n$, for all $n \geq 0$. From now we are going to drop
the letter $I$, and write simply $\Gamma(c)$ instead of
$\Gamma(cI)$. It is clear, that if $|c| \leq 1$, we have
$\Gamma(c)\varphi \in {\mathcal H}$, and the following inequality
holds:
\begin{eqnarray}
\|\Gamma(c)\varphi\|_2 & \leq & \|\varphi\|_2,
\end{eqnarray}
where $\| \cdot \|_2$ denotes the $L^2$--norm. Moreover, as it is
shown in \cite{jan97}, Theorem 4.12, page 48, if $c$ is real and
$|c| \leq 1$, then the second quantization operator $\Gamma(c)$ has
a unique continuous extension from $L^1(\Omega$, ${\mathcal F}(H)$,
$P)$ to $L^1(\Omega$, ${\mathcal F}(H)$, $P)$, that we denote also
by $\Gamma(c)$, and this extension is a bounded linear operator, of
operatorial norm equal to $1$, from $L^p(\Omega$, ${\mathcal F}(H)$,
$P)$ to $L^p(\Omega$, ${\mathcal F}(H)$, $P)$, for all $1 \leq p
\leq
\infty$.\\
\par The second quantization operators are distributive with respect
to the Wick product, in the following sense:
\begin{lemma}
For all $c$ in ${\mathbb C}$, such that $|c| \leq 1$, and all
$\varphi$ and $\psi$ in $L^2(\Omega$, ${\mathcal F}(H)$, $P)$, such
that $\varphi \diamond \xi$ belongs to $L^2(\Omega$, ${\mathcal
F}(H)$, $P)$, we have:
\begin{eqnarray}
\Gamma(c)(\varphi \diamond \psi) & = & \Gamma(c)\varphi \diamond
\Gamma(c)\psi.
\end{eqnarray}
\end{lemma}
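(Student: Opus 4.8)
The plan is to establish the identity first for homogeneous polynomial random variables and then to extend it by the bilinearity built into the definition (\ref{wickdef2}) of the Wick product. Suppose first that $\varphi \in G_m$ and $\psi \in G_n$. By definition (\ref{wickdef1}) the product $\varphi \diamond \psi = P_{m+n}(\varphi \cdot \psi)$ lives in the single chaos $G_{m+n}$, so $\Gamma(c)$ acts on it simply by multiplication by $c^{m+n}$; that is, $\Gamma(c)(\varphi \diamond \psi) = c^{m+n}(\varphi \diamond \psi)$. On the other hand $\Gamma(c)\varphi = c^m \varphi$ and $\Gamma(c)\psi = c^n \psi$, and since $\diamond$ is bilinear we obtain $\Gamma(c)\varphi \diamond \Gamma(c)\psi = c^{m+n}(\varphi \diamond \psi)$. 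The two expressions coincide, which settles the homogeneous case.

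For the general case I would write $\varphi = \sum_{m \geq 0} f_m$ and $\psi = \sum_{n \geq 0} g_n$ with $f_m \in G_m$ and $g_n \in G_n$, and group the Wick product according to total degree as in (\ref{wickdef2}), namely $\varphi \diamond \psi = \sum_{k \geq 0} h_k$ with $h_k := \sum_{p+q=k}(f_p \diamond g_q) \in G_k$. Applying $\Gamma(c)$, which by its very definition multiplies the $k$-th chaos component by $c^k$, gives $\Gamma(c)(\varphi \diamond \psi) = \sum_{k \geq 0} c^k h_k$. The key algebraic observation is that on each term $f_p \diamond g_q$ contributing to $h_k$ one has $c^k = c^{p+q} = c^p c^q$, so that $c^k h_k = \sum_{p+q=k}(c^p f_p) \diamond (c^q g_q)$ by the homogeneous case applied term by term. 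Recognizing $\sum_p c^p f_p = \Gamma(c)\varphi$ and $\sum_q c^q g_q = \Gamma(c)\psi$, the resulting double sum is precisely the expansion (\ref{wickdef2}) of $\Gamma(c)\varphi \diamond \Gamma(c)\psi$, which gives the claim.

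The main point requiring care is convergence, and I expect it to be the only real obstacle. The hypothesis guarantees that $\varphi \diamond \psi = \sum_k h_k$ converges in $L^2$, so $\sum_k \|h_k\|_2^2 < \infty$; since $|c| \leq 1$ we then have $\sum_k |c|^{2k}\|h_k\|_2^2 < \infty$, which shows both that $\sum_k c^k h_k$ converges in $L^2$ and that the rearranged series defining $\Gamma(c)\varphi \diamond \Gamma(c)\psi$ is itself $L^2$-convergent, so that this Wick product is well defined in $L^2$. The interchange of $\Gamma(c)$ with the infinite sum is then not a separate step but a direct consequence of the definition of $\Gamma(c)$ as the operator acting chaos-component-wise; alternatively one may invoke the contractivity $\|\Gamma(c)\cdot\|_2 \leq \|\cdot\|_2$, valid for $|c| \leq 1$, to pass $\Gamma(c)$ through the $L^2$-convergent partial sums and then let the number of terms tend to infinity.
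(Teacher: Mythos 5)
Your proof is correct: the paper itself states this lemma as background without any proof, and your chaos-expansion argument (homogeneous case via $\Gamma(c)f_m = c^m f_m$ and bilinearity, then degree-wise grouping as in the definition of the Wick product, with $L^2$-convergence of $\sum_k c^k h_k$ secured by orthogonality of the chaoses and $|c| \leq 1$) is exactly the standard argument the authors are implicitly relying on. Nothing is missing; in particular you correctly read the hypothesis ``$\varphi \diamond \xi \in L^2$'' as the obvious typo for $\varphi \diamond \psi \in L^2$, i.e.\ $L^2$-convergence of the series $\sum_k h_k$, which is precisely what your convergence step uses.
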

The second quantization operators are very important in assuring the
convergence of the Wick product of two random variables, due to the
following theorems, from \cite{kss2002} and \cite{ls2010},
respectively.
\begin{theorem}
For any $u$ and $v$ positive numbers, such that $(1/u) + (1/v) = 1$,
and for any $\varphi$ and $\psi$ in $L^2(\Omega$, ${\mathcal F}(H)$,
$P)$, the Wick product $\Gamma(1/\sqrt{u})\varphi \diamond
\Gamma(1/\sqrt{v})\psi$ belongs to $L^2(\Omega$, ${\mathcal F}(H)$,
$P)$, and the following inequality holds:
\begin{eqnarray}
\left\|\Gamma\left(\frac{1}{\sqrt{u}}\right)\varphi \diamond
\Gamma\left(\frac{1}{\sqrt{v}}\right)\psi\right\|_2 & \leq &
\|\varphi\|_2 \cdot \|\psi\|_2.
\end{eqnarray}
\end{theorem}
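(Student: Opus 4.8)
The plan is to reduce everything to the chaos decomposition and to exploit the orthogonality of the homogeneous chaos spaces $G_k$. Write $\varphi = \sum_{p \geq 0} f_p$ and $\psi = \sum_{q \geq 0} g_q$ with $f_p \in G_p$ and $g_q \in G_q$. I would first treat the case in which $\varphi$ and $\psi$ are polynomial random variables (only finitely many $f_p$, $g_q$ nonzero), so that all manipulations are finite sums, and recover the general $L^2$ statement at the end by a limiting argument. Applying the second quantization operators termwise gives $\Gamma(1/\sqrt{u})\varphi = \sum_p u^{-p/2} f_p$ and $\Gamma(1/\sqrt{v})\psi = \sum_q v^{-q/2} g_q$, and by bilinearity of the Wick product together with $f_p \diamond g_q \in G_{p+q}$, I would group the product by total degree $k = p+q$: the component of $\Gamma(1/\sqrt{u})\varphi \diamond \Gamma(1/\sqrt{v})\psi$ lying in $G_k$ is $S_k := \sum_{p+q=k} u^{-p/2} v^{-q/2}(f_p \diamond g_q)$. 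Since the spaces $G_k$ are mutually orthogonal, $\|\Gamma(1/\sqrt{u})\varphi \diamond \Gamma(1/\sqrt{v})\psi\|_2^2 = \sum_k \|S_k\|_2^2$, and the whole problem reduces to estimating each $\|S_k\|_2$.

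The crucial ingredient is a single-level estimate: for $f_p \in G_p$ and $g_q \in G_q$ one has $\|f_p \diamond g_q\|_2 \leq \binom{p+q}{p}^{1/2}\|f_p\|_2 \|g_q\|_2$. I would prove this through the canonical isometry between $G_n$ and the symmetric tensor power $H^{\odot n}$, under which the Wick product corresponds to the symmetric tensor product and, with the normalization $\|f_n\|_2^2 = n!\,|F_n|^2$, one has $\|f_p \diamond g_q\|_2^2 = (p+q)!\,|F_p \odot G_q|^2$. Because symmetrization is an orthogonal projection, $|F_p \odot G_q| \leq |F_p|\,|G_q|$, and the factor $(p+q)!/(p!\,q!)$ produces exactly the stated binomial constant. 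In the one-dimensional model, where $:X^p: \diamond :X^q: = :X^{p+q}:$, this bound is an equality, which is ultimately what makes the theorem sharp.

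With this estimate in hand, I would bound $\|S_k\|_2$ by the triangle inequality followed by Cauchy--Schwarz in the index $p$:
\[
\|S_k\|_2 \;\leq\; \sum_{p+q=k}\binom{k}{p}^{1/2}u^{-p/2}v^{-q/2}\|f_p\|_2\|g_q\|_2 \;\leq\; \Big(\sum_{p+q=k}\binom{k}{p}u^{-p}v^{-q}\Big)^{1/2}\Big(\sum_{p+q=k}\|f_p\|_2^2\|g_q\|_2^2\Big)^{1/2}.
\]
Here the first factor is the whole point: by the binomial theorem it equals $(u^{-1}+v^{-1})^k = 1$, precisely because $1/u + 1/v = 1$. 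Hence $\|S_k\|_2^2 \leq \sum_{p+q=k}\|f_p\|_2^2\|g_q\|_2^2$, and summing over $k$ and applying Tonelli's theorem to the nonnegative double series yields $\sum_k \|S_k\|_2^2 \leq \sum_{p,q}\|f_p\|_2^2\|g_q\|_2^2 = \|\varphi\|_2^2\,\|\psi\|_2^2$, which is the desired inequality for polynomial $\varphi$ and $\psi$.

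Finally I would remove the polynomiality assumption: applying the established inequality to the partial sums $\sum_{p \leq N} f_p$ and $\sum_{q \leq N} g_q$ shows that their Wick products form a Cauchy sequence in $L^2$, bounded by $\|\varphi\|_2\|\psi\|_2$, so $\Gamma(1/\sqrt{u})\varphi \diamond \Gamma(1/\sqrt{v})\psi$ is a well-defined element of $L^2(\Omega, {\mathcal F}(H), P)$ and the bound passes to the limit. The main obstacle is the single-level estimate: everything else is orthogonality and Cauchy--Schwarz, but that step requires the symmetric-tensor identification of the chaos spaces and the fact that symmetrization does not increase the Hilbert norm. Pinning down the combinatorial constant $\binom{p+q}{p}$ exactly is what allows the binomial sum to collapse to $1$ under the conjugacy relation $1/u + 1/v = 1$.
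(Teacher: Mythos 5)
Your proof is correct, but it follows a genuinely different route from the one this paper takes for this statement. What you have reconstructed is, in essence, the original argument of \cite{kss2002}, which the paper cites for this theorem without reproducing; indeed, the introduction describes that proof as resting on ``the orthogonal structure of the space $L^2(\Omega, {\mathcal F}(H), P)$ (Fock decomposition) and Cauchy--Buniakovski--Schwarz inequality'', which is precisely your combination of the single-level bound $\|f_p \diamond g_q\|_2 \leq \binom{p+q}{p}^{1/2}\|f_p\|_2\|g_q\|_2$ (via the symmetric-tensor identification of the chaos spaces), Cauchy--Schwarz within each total degree $k$, and the binomial collapse $\sum_{p+q=k}\binom{k}{p}u^{-p}v^{-q} = (1/u + 1/v)^k = 1$. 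The paper itself never argues this way: it obtains the $L^2$ inequality only as the special case $p=2$ of its finite-dimensional H\"older theorem and its infinite-dimensional extension, whose proofs run through Lemma \ref{convolution} (the identity turning Gaussian Wick products into Lebesgue convolutions), the classical H\"older inequality on ${\mathbb R}^d$, approximation by cylinder functions, and Fatou's lemma. The trade-offs are clear. Your argument is intrinsically Hilbertian: it works directly in infinite dimensions with no finite-dimensional reduction or almost-sure-convergence bookkeeping, it establishes $L^2$ convergence of the defining series (\ref{wickdef2}) for the scaled Wick product as a by-product, and it makes sharpness transparent (your single-level bound is an equality in the one-dimensional chaos). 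But it is locked to the exponent $2$, because everything rests on orthogonality of the spaces $G_k$. The convolution route of the paper gives up that simplicity in exchange for all $p \in [1,\infty]$ simultaneously, and it is the mechanism that scales up to the Young- and Lieb-type results (Theorems \ref{Nelson} and \ref{fullholdertheorem}) that are the paper's actual goal.
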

\begin{theorem}
For any $u$ and $v$ positive numbers, such that $(1/u) + (1/v) = 1$,
the bilinear operator $T_{u, v} : L^2(\Omega$, ${\mathcal F}(H)$,
$P) \times L^2(\Omega$, ${\mathcal F}(H)$, $P) \to L^2(\Omega$,
${\mathcal F}(H)$, $P)$, defined as:
\begin{eqnarray}
T_{u, v}(\varphi, \psi) & = &
\Gamma\left(\frac{1}{\sqrt{u}}\right)\varphi \diamond
\Gamma\left(\frac{1}{\sqrt{v}}\right)\psi,
\end{eqnarray}
for any $\varphi$ and $\psi$ in $L^2(\Omega$, ${\mathcal F}(H)$,
$P)$, admits a unique continuous linear extension $\tilde{T}_{u, v}$
from $L^1(\Omega$, ${\mathcal F}(H)$, $P) \times L^1(\Omega$,
${\mathcal F}(H)$, $P)$ to $L^1(\Omega$, ${\mathcal F}(H)$, $P)$.
Moreover, if $\varphi$ and $\psi$ are real valued, then we have:
\begin{eqnarray}
\left\|\Gamma\left(\frac{1}{\sqrt{u}}\right)\varphi \diamond
\Gamma\left(\frac{1}{\sqrt{v}}\right)\psi\right\|_1 & \leq &
\|\varphi\|_1 \cdot \|\psi\|_1
\end{eqnarray}
and
\begin{eqnarray}
\left\|\Gamma\left(\frac{1}{\sqrt{u}}\right)\varphi \diamond
\Gamma\left(\frac{1}{\sqrt{v}}\right)\psi\right\|_{\infty} & \leq &
\|\varphi\|_{\infty} \cdot \|\psi\|_{\infty}.
\end{eqnarray}
\end{theorem}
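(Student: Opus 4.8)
The plan is to reduce all three claims—the $L^\infty$ bound, the $L^1$ bound, and the existence of the continuous extension $\tilde{T}_{u,v}$—to the representation formula that the paper establishes in Section~3, which expresses the second--quantized Wick product as an average of an ordinary product over an auxiliary independent Gaussian (the convolution--type representation). Working first in the finite--dimensional model $H=\mathbb{R}^d$ with standard Gaussian measure $\gamma_d$, I would prove that for suitable $\varphi,\psi$,
\[
\left[\Gamma\!\left(\tfrac{1}{\sqrt{u}}\right)\varphi \diamond \Gamma\!\left(\tfrac{1}{\sqrt{v}}\right)\psi\right](x) = \int_{\mathbb{R}^d} \varphi\!\left(\frac{x}{\sqrt{u}} + \frac{y}{\sqrt{v}}\right)\psi\!\left(\frac{x}{\sqrt{v}} - \frac{y}{\sqrt{u}}\right)\, d\gamma_d(y).
\]
The cleanest way to obtain this is to test it on the Wick exponentials $\mathcal{E}_\lambda(x) = e^{\langle\lambda,x\rangle - |\lambda|^2/2}$, for which $\Gamma(c)\mathcal{E}_\lambda = \mathcal{E}_{c\lambda}$ and $\mathcal{E}_\lambda \diamond \mathcal{E}_\mu = \mathcal{E}_{\lambda+\mu}$; a direct Gaussian integration of the right--hand side reproduces $\mathcal{E}_{\lambda/\sqrt{u}+\mu/\sqrt{v}}$ precisely because $(1/u)+(1/v)=1$. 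Since the Wick exponentials span a dense subspace, the identity then propagates by bilinearity and continuity.

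The decisive structural fact is that the linear map $(x,y)\mapsto\bigl(x/\sqrt{u}+y/\sqrt{v},\; x/\sqrt{v}-y/\sqrt{u}\bigr)$ is orthogonal: its two rows have squared norms $(1/u)+(1/v)=1$ and are mutually orthogonal because $1/\sqrt{uv}-1/\sqrt{uv}=0$. Equivalently, if $Y$ is an independent standard Gaussian, then $\bigl(X/\sqrt{u}+Y/\sqrt{v},\;X/\sqrt{v}-Y/\sqrt{u}\bigr)$ is again a pair of independent standard Gaussians. This is exactly where the constraint $(1/u)+(1/v)=1$ enters, and it is the single hypothesis that makes everything work.

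Granting the representation, the two inequalities follow almost immediately. For the $L^\infty$ bound I would estimate pointwise,
\[
\left|\left[\Gamma\!\left(\tfrac{1}{\sqrt{u}}\right)\varphi \diamond \Gamma\!\left(\tfrac{1}{\sqrt{v}}\right)\psi\right](x)\right| \le \|\varphi\|_\infty \|\psi\|_\infty \int_{\mathbb{R}^d} d\gamma_d(y) = \|\varphi\|_\infty\|\psi\|_\infty,
\]
and take the essential supremum in $x$. For the $L^1$ bound I would bring the absolute value inside, apply Fubini, and change variables by the orthogonal map above; the invariance of the product measure $d\gamma_d(x)\,d\gamma_d(y)$ under that rotation turns the double integral into $\|\varphi\|_1\|\psi\|_1$. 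The continuous bilinear extension $\tilde{T}_{u,v}$ to $L^1\times L^1$ is then routine: $L^2$ is dense in $L^1$ on a probability space, and the $L^1$ bound, together with bilinearity, shows that $T_{u,v}(\varphi_n,\psi_n)$ is Cauchy in $L^1$ whenever $\varphi_n\to\varphi$ and $\psi_n\to\psi$ in $L^1$, which yields both existence and uniqueness of the extension.

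The main obstacle is establishing and legitimately using the representation formula rather than the inequalities themselves. Concretely, I expect to have to (i) pass from the finite--dimensional model to a general separable Gaussian Hilbert space, reducing a general $\varphi$ to a cylinder function of finitely many coordinates via the chaos decomposition and then invoking a Fatou argument to reach the infinite--dimensional case; (ii) justify that the right--hand side integral is well defined a.e. and that Fubini and the change of variables apply for merely integrable $\varphi,\psi$, where the Wick product itself is only available through the continuous extension; and (iii) control the density/approximation step so that the identity proved on Wick exponentials or polynomial random variables survives the limit in the correct norm. The real--valuedness hypothesis is convenient for the approximation in $L^1$ and for matching the original argument of \cite{ls2010}, although the pointwise estimates above do not actually require it.
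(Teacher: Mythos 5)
Your proof is correct, but it is not the paper's proof --- in fact the paper offers no proof of this theorem at all: it is imported as background from \cite{ls2010}, and the paper's own machinery could not reprove it without circularity, because the complete proof of the convolution lemma in Section 3 (and hence every theorem in Section 4) invokes precisely this $L^1$ inequality in order to make sense of, and to pass to the limit in, the Wick product of two $L^1$ functions. Your route --- representing $\Gamma(1/\sqrt{u})\varphi\diamond\Gamma(1/\sqrt{v})\psi$ as the Gaussian average $\int\varphi(x/\sqrt{u}+y/\sqrt{v})\,\psi(x/\sqrt{v}-y/\sqrt{u})\,d\gamma_d(y)$, checked on Wick exponentials and extended by bilinearity and continuity --- is essentially the device of the cited reference \cite{ls2010}, and the key computation is right: the relation $(1/u)+(1/v)=1$ is exactly what makes the map $(x,y)\mapsto\bigl(x/\sqrt{u}+y/\sqrt{v},\,x/\sqrt{v}-y/\sqrt{u}\bigr)$ orthogonal, so the triangle inequality plus rotation invariance of $\gamma_d\otimes\gamma_d$ yields the $L^1$ bound, the trivial pointwise estimate yields the $L^\infty$ bound, and the bilinear Cauchy-sequence argument yields existence and uniqueness of $\tilde{T}_{u,v}$. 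The one step you should make precise is the propagation of the identity from exponentials to $L^2\times L^2$: the clean statement is that both sides are continuous bilinear maps from $L^2\times L^2$ into $L^1$ (the left side by the Kuo--Sait\^o--Stan $L^2$ theorem quoted just before this one, followed by the embedding of $L^2$ into $L^1$; the right side by Cauchy--Schwarz together with the same orthogonality), and they agree on the dense span of exponentials, hence everywhere. By comparison, the paper's own related results in Section 4 reach these inequalities through the Lebesgue convolution identity and the classical H\"older inequality, which is heavier but unifies them with hypercontractivity and Lieb's inequality; your argument is more elementary and self-contained, and --- as you note --- it never uses real-valuedness, so it already yields the complex-valued case, which is precisely the improvement this paper advertises as one of its contributions (obtained there by the longer route).
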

In this paper, we will remove the condition ``real valued" from the
last sentence of the previous theorem.\\
\par There is an important family of random variables in this
theory, that have some beautiful properties with respect to the Wick
product and second quantization operators. These functions are
called the ({\em renormalized}){\em exponential random variables},
and are defined as follows. For any $\xi \in H_c$ (where $H_c$
denotes the complexification of $H$), we define the {\em exponential
function} ${\mathcal E}_{\xi}$ {\em generated by} $\xi$, by the
formula:
\begin{eqnarray}
{\mathcal E}_{\xi} & := & \sum_{n = 0}^{\infty}\frac{1}{n!}
\xi^{\diamond n}, \label{expdef1}
\end{eqnarray}
where $\xi^{\diamond n} : = \xi \diamond \xi \diamond \cdots
\diamond \xi$ ($n$ times). As a random variable, ${\mathcal
E}_{\xi}$ can be written as:
\begin{eqnarray}
{\mathcal E}_{\xi}(\omega) & = & e^{\xi(\omega) - \frac{1}{2}\langle
\xi, \xi \rangle}, \label{expdef2}
\end{eqnarray}
for all $\omega \in \Omega$, where:
\begin{eqnarray}
\langle f, g \rangle & := & E[f \cdot g],
\end{eqnarray}
for all $f$ and $g$ in $H_c$, and $E$ denotes the expectation. It
can be easily seen that ${\mathcal E}_{\xi}$ belongs to
$L^p(\Omega$, ${\mathcal F}(H)$, $P)$, for all $1 \leq p < \infty$,
and all $\xi$ in $H_c$. The family of exponential functions is
closed with respect to the Wick product and second quantization
operators, as illustrated by the following lemma (see \cite{k1996}
and \cite{o1994}).
\begin{lemma}
For all $\xi$ and $\eta$ in $H_c$, and all $c$ in ${\mathbb C}$, we
have:
\begin{eqnarray}
{\mathcal E}_{\xi} \diamond {\mathcal E}_{\eta} & = & {\mathcal
E}_{\xi + \eta}
\end{eqnarray}
and
\begin{eqnarray}
\Gamma(c){\mathcal E}_{\xi} & = & {\mathcal E}_{c\xi}.
\end{eqnarray}
\end{lemma}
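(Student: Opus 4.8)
The plan is to work directly from the series definition (\ref{expdef1}) of the exponential functions, exploiting the algebraic structure of the Wick product on the homogeneous chaos spaces together with the orthogonality that controls convergence in $L^2$. One might be tempted to argue instead from the pointwise formula (\ref{expdef2}); however, the \emph{ordinary} product of two exponentials is $e^{\langle\xi,\eta\rangle}{\mathcal E}_{\xi+\eta}$, which is not the Wick product, so the chaos-series route is the natural one.

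First I would dispose of the second identity, which is essentially immediate. By definition $\Gamma(c)$ multiplies the component of a random variable lying in $G_n$ by $c^n$. Since the $n$-th chaos component of ${\mathcal E}_\xi$ is $\frac{1}{n!}\xi^{\diamond n}\in G_n$, and since the Wick product is homogeneous so that $(c\xi)^{\diamond n}=c^n\,\xi^{\diamond n}$, applying $\Gamma(c)$ term by term gives $\Gamma(c){\mathcal E}_\xi=\sum_{n}\frac{c^n}{n!}\xi^{\diamond n}=\sum_{n}\frac{1}{n!}(c\xi)^{\diamond n}={\mathcal E}_{c\xi}$. The only thing to verify is that this term-by-term action is legitimate, which follows from the $L^2$-convergence of the defining series established below.

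For the first identity I would expand both factors via (\ref{expdef1}) and use the bilinearity of $\diamond$. Collecting all contributions of total homogeneity degree $k$ — a \emph{finite} sum, since only the pairs with $m+n=k$ contribute and $\xi^{\diamond m}\diamond\eta^{\diamond n}\in G_{m+n}$ — yields the degree-$k$ term $\sum_{m+n=k}\frac{1}{m!\,n!}\,\xi^{\diamond m}\diamond\eta^{\diamond n}$. Here I would invoke the Wick binomial theorem $(\xi+\eta)^{\diamond k}=\sum_{m+n=k}\binom{k}{m}\xi^{\diamond m}\diamond\eta^{\diamond n}$, valid because $\diamond$ is commutative and associative on $\bigoplus_{n}G_n$ (it realizes this graded space as a commutative algebra). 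This rewrites the degree-$k$ term as $\frac{1}{k!}(\xi+\eta)^{\diamond k}$, and summing over $k$ produces exactly ${\mathcal E}_{\xi+\eta}$ in the form prescribed by the Wick product definition (\ref{wickdef2}).

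The main obstacle is the bookkeeping of convergence. I would control it using the standard norm identity $\|\xi^{\diamond n}\|_2^2=n!\,\langle\xi,\xi\rangle^n$, which gives $\|{\mathcal E}_\xi\|_2^2=e^{\langle\xi,\xi\rangle}<\infty$ and, more to the point, $\|\frac{1}{k!}(\xi+\eta)^{\diamond k}\|_2^2=\langle\xi+\eta,\xi+\eta\rangle^k/k!$; summing over $k$ returns $e^{\langle\xi+\eta,\xi+\eta\rangle}$, so ${\mathcal E}_{\xi+\eta}\in L^2$ and the series converges. Because distinct chaos spaces $G_k$ are mutually orthogonal and the inner sum over $m+n=k$ is finite, no delicate rearrangement of a genuinely infinite double series is required: the grouping by degree is precisely the prescription (\ref{wickdef2}), and hence the identity holds as an equality in $L^2(\Omega,{\mathcal F}(H),P)$.
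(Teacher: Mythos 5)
Your argument is correct in substance, but note that the paper itself offers no proof of this lemma: it is stated as background with a citation to \cite{k1996} and \cite{o1994}, so there is no internal proof to compare against. Your chaos-expansion route --- bilinearity of $\diamond$, grouping by total degree exactly as prescribed by (\ref{wickdef2}), the Wick binomial theorem $(\xi+\eta)^{\diamond k}=\sum_{m+n=k}\binom{k}{m}\xi^{\diamond m}\diamond\eta^{\diamond n}$, and orthogonality of distinct chaos spaces to control $L^2$-convergence --- is essentially the standard argument in those references, and it has the merit of being self-contained given the paper's definitions (\ref{expdef1}) and (\ref{wickdef2}). You are also right to avoid the pointwise formula (\ref{expdef2}), since the ordinary product of exponentials differs from the Wick product by the factor $e^{\langle\xi,\eta\rangle}$.

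Two points deserve care. First, the Wick binomial theorem rests on commutativity and associativity of $\diamond$ on $\bigoplus_n G_n$; associativity is nowhere stated in the paper, and while it does follow from the definition (\ref{wickdef1}) --- because $(I-P_{m+n})(\varphi\psi)$ lies in $\bar F_{m+n-1}$, so multiplying it by an element of $G_l$ produces something in $\bar F_{m+n+l-1}$, which is orthogonal to $G_{m+n+l}$ --- you should supply this argument or an explicit citation rather than assert it parenthetically. Second, your norm identities are the real-variable ones, whereas the lemma concerns $\xi\in H_c$: since the paper's pairing $\langle f,g\rangle = E[fg]$ is bilinear, not Hermitian, $\langle\xi,\xi\rangle$ can be complex or even zero for $\xi\neq 0$ (take $\xi=h_1+ih_2$ with $h_1,h_2$ orthonormal), so the correct statements are $\|\xi^{\diamond n}\|_2^2=n!\,\langle\xi,\bar\xi\rangle^n$ and $\|{\mathcal E}_\xi\|_2^2=e^{\langle\xi,\bar\xi\rangle}$, not $n!\,\langle\xi,\xi\rangle^n$ and $e^{\langle\xi,\xi\rangle}$. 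This does not break the proof --- orthogonality of the $G_k$ together with square-summability of the correct Hermitian norms still yields $L^2$-convergence --- but as written the convergence estimate, which you yourself single out as the main obstacle, is stated incorrectly for precisely the complex vectors the lemma is about.
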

Finally, the exponential functions are important in defining the
$S$--transform. If $\varphi \in L^2(\Omega$, ${\mathcal F}(H)$, $P)$
and $\xi \in H_c$, then the {\em $S$--transform} of $\varphi$ at
$\xi$ is defined by the formula:
\begin{eqnarray}
(S\varphi)(\xi) & = & E\left[\varphi \cdot {\mathcal
E}_{\xi}\right].
\end{eqnarray}
The $S$--transform is a unitary operator from $L^2(\Omega$,
${\mathcal F}(H)$, $P)$ onto a Hilbert space of holomorphic
functions, which we are not going to describe, since it is not
important in this paper. If one considers the $S$--transform as an
analogue of Fourier or Laplace transforms from the classic Analysis,
then the Wick product becomes automatically an analogue of the
convolution product, due to the following easy to check property.
\begin{lemma}
For any two functions $\varphi$ and $\psi$ in $L^2(\Omega$,
${\mathcal F}(H)$, $P)$, such that $\varphi \diamond \psi$ belongs
to $L^2(\Omega$, ${\mathcal F}(H)$, $P)$, we have:
\begin{eqnarray}
S(\varphi \diamond \psi) & = & (S\varphi) \cdot (S\psi).
\end{eqnarray}
\end{lemma}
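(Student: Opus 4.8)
The plan is to verify the identity first on the total family of exponential random variables, where both the Wick product and the $S$--transform can be computed in closed form, and then to propagate it to arbitrary $\varphi,\psi$ by bilinearity together with a density and continuity argument. Observe first that all the pairings involved are well defined: for $\varphi,\psi\in L^2(\Omega,\mathcal{F}(H),P)$ and $\xi\in H_c$ we have $\mathcal{E}_\xi\in L^2$, so by the Cauchy--Schwarz inequality $\varphi\cdot\mathcal{E}_\xi$ is integrable and $(S\varphi)(\xi)$ makes sense; the hypothesis $\varphi\diamond\psi\in L^2$ guarantees in the same way that the left--hand side $S(\varphi\diamond\psi)(\xi)=E[(\varphi\diamond\psi)\cdot\mathcal{E}_\xi]$ is defined.

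First I would compute the $S$--transform of an exponential. Using the pointwise representation $\mathcal{E}_\zeta(\omega)=e^{\zeta(\omega)-\frac12\langle\zeta,\zeta\rangle}$ one checks the elementary pointwise identity
\[
\mathcal{E}_\eta\cdot\mathcal{E}_\xi=e^{\langle\eta,\xi\rangle}\,\mathcal{E}_{\eta+\xi},
\]
since $\tfrac12\langle\eta+\xi,\eta+\xi\rangle-\tfrac12\langle\eta,\eta\rangle-\tfrac12\langle\xi,\xi\rangle=\langle\eta,\xi\rangle$. Because the higher chaos components of $\mathcal{E}_\zeta$ have zero mean (equivalently, by the Gaussian moment generating function), one has $E[\mathcal{E}_\zeta]=1$ for every $\zeta\in H_c$, and taking expectations yields
\[
(S\mathcal{E}_\eta)(\xi)=E[\mathcal{E}_\eta\cdot\mathcal{E}_\xi]=e^{\langle\eta,\xi\rangle}.
\]
With this in hand the identity on exponentials is immediate: by the lemma on the closure of the exponential family under the Wick product, $\mathcal{E}_\eta\diamond\mathcal{E}_{\eta'}=\mathcal{E}_{\eta+\eta'}$, so
\[
S(\mathcal{E}_\eta\diamond\mathcal{E}_{\eta'})(\xi)=(S\mathcal{E}_{\eta+\eta'})(\xi)=e^{\langle\eta+\eta',\xi\rangle}=e^{\langle\eta,\xi\rangle}e^{\langle\eta',\xi\rangle}=(S\mathcal{E}_\eta)(\xi)\cdot(S\mathcal{E}_{\eta'})(\xi).
\]

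To reach the general statement I would descend to polynomial random variables by expanding $\mathcal{E}_{t\eta}=\sum_{n\ge 0}\frac{t^n}{n!}\eta^{\diamond n}$ in the scalar parameter $t$ and matching coefficients in the identity just proved; the bilinearity of $\diamond$ and of the map $(\varphi,\psi)\mapsto(S\varphi)(S\psi)$, together with the fact that the Wick powers $\eta^{\diamond n}$ span $G_n$ (by polarization), then yields the identity for all finite chaos expansions. Finally, for general $\varphi,\psi\in L^2$ with $\varphi\diamond\psi\in L^2$, one approximates $\varphi$ and $\psi$ by the partial sums of their chaos decompositions and passes to the limit, using the continuity (indeed the unitarity onto its image) of $S$ on $L^2$.

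The step I expect to be the main obstacle is precisely this last limiting argument: the Wick product is \emph{not} a bounded bilinear map on $L^2\times L^2$, so the passage from polynomial random variables to arbitrary $\varphi,\psi$ cannot rely on naive continuity of $\diamond$. The hypothesis $\varphi\diamond\psi\in L^2$ is exactly what is needed to control this limit; one must verify that the truncated Wick products converge to $\varphi\diamond\psi$ strongly enough to transfer the pairing against $\mathcal{E}_\xi$ to the limit, while the products of the truncated $S$--transforms converge to $(S\varphi)(\xi)\cdot(S\psi)(\xi)$ for each fixed $\xi$. Everything else reduces to routine, if somewhat tedious, bookkeeping of the chaos components.
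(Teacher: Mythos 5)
The paper never actually proves this lemma --- it is introduced as an ``easy to check property'' and stated without proof --- so your argument has to stand entirely on its own. Its first stages do stand: the pointwise identity $\mathcal{E}_\eta\mathcal{E}_\xi=e^{\langle\eta,\xi\rangle}\mathcal{E}_{\eta+\xi}$, the formula $(S\mathcal{E}_\eta)(\xi)=e^{\langle\eta,\xi\rangle}$, and the resulting identity on exponentials are correct, and the descent to homogeneous chaos degrees is workable. One caveat there: the Wick powers $\eta^{\diamond n}$ span only a \emph{dense} subspace of $G_n$, so to get the identity on all of $G_m\times G_n$ you also need the fixed--degree bound $\|f\diamond g\|_2\le\binom{m+n}{m}^{1/2}\|f\|_2\,\|g\|_2$ for $f\in G_m$, $g\in G_n$; with that bound in hand, this part really is routine.

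The genuine gap is the step you yourself flag as the main obstacle and then leave undone, and the mechanism you propose for it --- showing that the truncations $\varphi_N\diamond\psi_N$ converge to $\varphi\diamond\psi$ ``strongly enough'' --- is the wrong one. Writing $c_k:=\sum_{p+q=k}f_p\diamond g_q$, the hypothesis says only that the diagonal series $\sum_k c_k$ converges in $L^2$; the square truncation $\varphi_N\diamond\psi_N$ differs from $\sum_{k\le N}c_k$ by the corner terms $\sum_{p,q\le N,\,p+q>N}f_p\diamond g_q$, which the hypothesis does not control, so no strong convergence of $\varphi_N\diamond\psi_N$ should be expected. What closes the argument instead is an estimate absent from your proposal: for $f_p\in G_p$ and $\xi\in H_c$, only the $p$--th chaos component of $\mathcal{E}_\xi$ pairs nontrivially with $f_p$, so
\[
\left|(Sf_p)(\xi)\right|=\frac{1}{p!}\left|E\!\left[f_p\,\xi^{\diamond p}\right]\right|
\le\|f_p\|_2\,\frac{\|\xi\|^p}{\sqrt{p!}},\qquad \|\xi\|^2:=E\left[|\xi|^2\right],
\]
whence, by Cauchy--Schwarz, $\sum_{p\ge 0}|(Sf_p)(\xi)|\le\|\varphi\|_2\,e^{\|\xi\|^2/2}<\infty$: the chaos series of any $S$--transform converges \emph{absolutely}. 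With this, the proof finishes without any strong limit. Since $\sum_k c_k$ converges in $L^2$ and pairing against $\mathcal{E}_\xi\in L^2$ is continuous, $S(\varphi\diamond\psi)(\xi)=\sum_k (Sc_k)(\xi)=\sum_k\sum_{p+q=k}(Sf_p)(\xi)(Sg_q)(\xi)$ by the homogeneous--degree identity; this last double sum is the Cauchy product of two absolutely convergent series, hence equals $\bigl(\sum_p(Sf_p)(\xi)\bigr)\bigl(\sum_q(Sg_q)(\xi)\bigr)=(S\varphi)(\xi)\,(S\psi)(\xi)$. Note that the hypothesis $\varphi\diamond\psi\in L^2$ is used only to make the left--hand side well defined and computable term by term. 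With this estimate inserted, your outline becomes a complete proof; without it, the final step is a genuine hole.
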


\section{An connection between the Gaussian Wick product
and Lebesgue convolution product}

 Let $d$ be a fixed positive integer. Let $d_Nx$ denote the
normalized Lebesgue measure on ${\mathbb R}^d$,
$(1/\sqrt{2\pi})^ddx$, and $\mu$ the standard Gaussian probability
measure on ${\mathbb R}^d$, i.e., $d\mu = e^{-\langle x, x
\rangle/2}d_Nx$. If $X_i : \Omega \to {\mathbb R}$, $i = \{1$, $2$,
$\dots$, $d\}$ are independent standard normal random variables, and
${\mathcal F}$ is the sigma--algebra generated by them, then any
random variable $Y : \Omega \to {\mathbb C}$, that is measurable
with respect ${\mathcal F}$, can be written as $Y = g(X_1$, $X_2$,
$\dots$, $X_d)$, where $g : {\mathbb R}^d \to {\mathbb C}$ is a
Borel measurable function. From now on we will write $g(x)$ with a
lower case $x$ instead of the upper case $X$, and do the
computations of integrals in terms of the probability distribution
$\mu$ of $X$, where $X := (X_1$, $X_2$, $\dots$, $X_d)$. Observe
that in terms of distributions, for any $p \geq 1$, the $L^p$ norm
of a function $f(x)$ with respect to the Gaussian measure $\mu$ is
the same as the $L^p$ norm of $f(x)e^{-\langle x, x \rangle/(2p)}$
with respect to the normalized Lebesgue measure $d_Nx$. This simple
fact will be used throughout this paper. Everything will be done
using the normalized Lebesgue measure. Throughout this paper, for
any $p \in [1$, $\infty]$, we will denote by $\| \cdot \|_p$ and
$\|| \cdot |\|_p$ the $L^p$ norms with respect to the Gaussian
measure $\mu$ and normalized Lebesgue measure $d_Nx$, respectively.
\par We are now presenting a connection between the Gaussian Wick product
$\diamond$ and convolution product $\star$ with respect to the
normalized Lebesgue measure.

\begin{lemma}\label{convolution}
Let $u$ and $v$ be positive numbers, such that: $(1/u) + (1/v) = 1$.
Then for any $\varphi$ and $\psi$ in $L^1({\mathbb R}^d$, $\mu)$, we
have that $\Gamma(1/\sqrt{u})\varphi \diamond
\Gamma(1/\sqrt{v})\psi$ belongs to $L^1({\mathbb R}^d$, $\mu)$ and:
\begin{eqnarray}
& \ & \left[\varphi
\left(\frac{1}{\sqrt{v}}x\right)e^{-\frac{\langle x, x
\rangle}{2v}}\right] \star \left[\psi
\left(\frac{1}{\sqrt{u}}x\right)e^{-\frac{\langle x, x
\rangle}{2u}}\right]
\nonumber\\
& = & \left[\Gamma\left(\frac{1}{\sqrt{u}}\right)\varphi \diamond
\Gamma\left(\frac{1}{\sqrt{v}}\right)\psi\right]
\left(\frac{1}{\sqrt{uv}}x\right)e^{-\frac{1}{2uv}\langle x, x
\rangle}, \label{convWick}
\end{eqnarray}
\end{lemma}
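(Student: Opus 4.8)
The plan is to reduce (\ref{convWick}) to the case of renormalized exponential functions and then verify it by an explicit Gaussian integral. First I would observe that both sides of (\ref{convWick}) are continuous bilinear maps from $L^1(\mathbb{R}^d,\mu)\times L^1(\mathbb{R}^d,\mu)$ into $L^1(\mathbb{R}^d, d_Nx)$. On the right, the Wick product $(\varphi,\psi)\mapsto\Gamma(1/\sqrt u)\varphi\diamond\Gamma(1/\sqrt v)\psi$ takes values in $L^1(\mu)$ by the extension theorem of \cite{ls2010} quoted in Section~2, while the substitution operator $F\mapsto F(x/\sqrt{uv})\,e^{-\langle x,x\rangle/(2uv)}$ is a bounded isomorphism onto $L^1(d_Nx)$, since the change of variables $x=\sqrt{uv}\,z$ shows its $L^1(d_Nx)$ norm equals $(uv)^{d/2}\|F\|_{L^1(\mu)}$. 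On the left, the operators $\varphi\mapsto\varphi(x/\sqrt v)e^{-\langle x,x\rangle/(2v)}$ and $\psi\mapsto\psi(x/\sqrt u)e^{-\langle x,x\rangle/(2u)}$ are likewise bounded into $L^1(d_Nx)$ by the same change of variables, and convolution against the translation invariant measure $d_Nx$ is bounded from $L^1(d_Nx)\times L^1(d_Nx)$ to $L^1(d_Nx)$ by Young's inequality. Hence it suffices to verify (\ref{convWick}) on a dense family, and I would use the renormalized exponential functions $\mathcal{E}_\xi$.

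For this reduction I need the renormalized exponentials to be total in $L^1(\mu)$. In the present coordinates an element $\xi\in H_c$ is a linear form $\xi(x)=\langle a,x\rangle$ with $a\in\mathbb{C}^d$, and $\mathcal{E}_\xi(x)=e^{\langle a,x\rangle-\frac12\langle a,a\rangle}$. If $g\in L^\infty(\mu)$ annihilates every $\mathcal{E}_\xi$ with $a\in\mathbb{R}^d$, then the entire function $z\mapsto\int e^{\langle z,x\rangle}g(x)e^{-\langle x,x\rangle/2}\,dx$ vanishes on $\mathbb{R}^d$, hence identically, so $g=0$; by $L^1$--$L^\infty$ duality the linear span of the $\mathcal{E}_\xi$ is dense in $L^1(\mu)$. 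Consequently, once (\ref{convWick}) holds for all pairs $(\mathcal{E}_\xi,\mathcal{E}_\eta)$, bilinearity extends it to the product of the dense spans and continuity extends it to all of $L^1(\mu)\times L^1(\mu)$.

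It then remains to check (\ref{convWick}) for $\varphi=\mathcal{E}_\xi$, $\psi=\mathcal{E}_\eta$ with $\xi(x)=\langle a,x\rangle$ and $\eta(x)=\langle b,x\rangle$. Using the identities $\Gamma(c)\mathcal{E}_\xi=\mathcal{E}_{c\xi}$ and $\mathcal{E}_\xi\diamond\mathcal{E}_\eta=\mathcal{E}_{\xi+\eta}$ from Section~2, the right-hand Wick product collapses to a single exponential, $\Gamma(1/\sqrt u)\mathcal{E}_\xi\diamond\Gamma(1/\sqrt v)\mathcal{E}_\eta=\mathcal{E}_{a/\sqrt u+b/\sqrt v}$, so the right-hand side of (\ref{convWick}) becomes a fully explicit Gaussian exponential in $x$ after the rescaling $x\mapsto x/\sqrt{uv}$ and multiplication by $e^{-\langle x,x\rangle/(2uv)}$. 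For the left-hand side I would write $f(x)=e^{\frac{1}{\sqrt v}\langle a,x\rangle-\frac12\langle a,a\rangle-\frac{1}{2v}\langle x,x\rangle}$ and $g(x)=e^{\frac{1}{\sqrt u}\langle b,x\rangle-\frac12\langle b,b\rangle-\frac{1}{2u}\langle x,x\rangle}$ and compute $(f\star g)(x)=\int f(x-y)g(y)\,d_Ny$ directly. Expanding $\langle x-y,x-y\rangle$ produces a quadratic in the integration variable whose leading coefficient is $-\frac12(\frac1u+\frac1v)=-\frac12$, precisely because $1/u+1/v=1$; the resulting Gaussian integral $\int e^{-\frac12\langle y,y\rangle+\langle w,y\rangle}\,dy=(2\pi)^{d/2}e^{\frac12\langle w,w\rangle}$ cancels the normalizing constant of $d_Nx$, and completing the square reproduces, term by term in $\langle a,x\rangle$, $\langle b,x\rangle$, $\langle x,x\rangle$, $\langle a,a\rangle$, $\langle b,b\rangle$ and $\langle a,b\rangle$, exactly the exponent obtained on the right, where the simplifications $1-1/v=1/u$ and $1-1/u=1/v$ are used at each step.

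The arithmetic of matching the two exponents is routine; the part requiring the most care is the soft reduction, namely verifying that the two maps are genuinely continuous into the single space $L^1(d_Nx)$ and that the renormalized exponentials are total in $L^1(\mu)$, since only then does agreement on exponentials propagate to all $L^1$ functions. I expect this density-and-continuity step, rather than the Gaussian computation, to be the main obstacle.
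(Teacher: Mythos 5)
Your proposal is correct and follows essentially the same route as the paper: verify the identity for (linear combinations of) renormalized exponentials by completing the square in a Gaussian integral, then extend to all of $L^1(\mu)\times L^1(\mu)$ by density, using Young's inequality for the convolution side and the Lanconelli--Stan $L^1$ extension theorem from \cite{ls2010} for the Wick side. The only differences are presentational: you phrase the extension step as joint continuity of two bilinear maps rather than as a sequential limit, and you supply a duality/analyticity proof of the density of exponentials in $L^1(\mu)$, which the paper simply asserts.
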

where the convolution product in the left--hand side is computed
with respect to the normalized Lebesgue measure $d_Nx$.

We prove first an easier version of this lemma.

\begin{lemma}\label{connection}
Let ${\mathcal E} = \{\sum_{i = 1}^nc_ie^{\langle \xi_i, x \rangle -
(1/2)\langle \xi_i, \xi_i \rangle} \mid n \in {\mathbb N}, c_i \in
{\mathbb C}, \xi_i \in {\mathbb C}^d, \forall i \in \{1, 2, \dots,
n\}\}$. Let $u$ and $v$ be positive numbers, such that $(1/u) +
(1/v) = 1$. For any $\varphi$ and $\psi$ in ${\mathcal E}$, we have:
\begin{eqnarray}
& \ & \left\{\left[\Gamma(\sqrt{u})\varphi\right]
\left(\frac{x}{\sqrt{v}}\right)e^{-\frac{\langle x, x
\rangle}{2v}}\right\} \star \left\{\left[\Gamma(\sqrt{v})\psi\right]
\left(\frac{x}{\sqrt{u}}\right)e^{-\frac{\langle x, x
\rangle}{2u}}\right\}
\nonumber\\
& = & (\varphi \diamond
\psi)\left(\frac{x}{\sqrt{uv}}\right)e^{-\frac{\langle x, x
\rangle}{2uv}}. \label{fundamental}
\end{eqnarray}
In particular, replacing $\varphi$ and $\psi$ by
$\Gamma(1/\sqrt{u})\varphi$ and $\Gamma(1/\sqrt{v})\psi$,
respectively, we obtain that (\ref{convWick}) holds for any two
functions that are linear combinations of exponential functions.
\end{lemma}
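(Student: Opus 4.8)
The plan is to exploit bilinearity to reduce to single exponential functions, and then to compute the convolution on the left-hand side of (\ref{fundamental}) directly as a Gaussian integral. Since the second quantization operator $\Gamma$ is linear, multiplication by a fixed Gaussian weight is linear, the convolution $\star$ is bilinear in its two arguments, and the Wick product $\diamond$ is bilinear, both sides of (\ref{fundamental}) depend bilinearly on the pair $(\varphi, \psi)$. As every element of ${\mathcal E}$ is a finite linear combination of exponential functions ${\mathcal E}_\xi(x) = e^{\langle \xi, x\rangle - (1/2)\langle \xi, \xi\rangle}$, it therefore suffices to verify (\ref{fundamental}) when $\varphi = {\mathcal E}_\xi$ and $\psi = {\mathcal E}_\eta$ for arbitrary $\xi, \eta \in {\mathbb C}^d$.

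For this case I would use the two identities $\Gamma(c){\mathcal E}_\xi = {\mathcal E}_{c\xi}$ and ${\mathcal E}_\xi \diamond {\mathcal E}_\eta = {\mathcal E}_{\xi + \eta}$ to rewrite both factors on the left and the single function on the right of (\ref{fundamental}) as explicit elementary exponentials whose exponents are affine-plus-quadratic in $x$ and in the integration variable $y$. Writing out $(f \star g)(x) = \int_{{\mathbb R}^d} f(x - y) g(y)\, d_N y$ and collecting the terms quadratic in $y$, the coefficient of $\langle y, y\rangle$ comes out to be $-\tfrac12(1/v + 1/u) = -\tfrac12$. This is the crux of the argument: the hypothesis $(1/u) + (1/v) = 1$ is exactly what forces the two Gaussian weights to combine so that the surviving factor $e^{-\langle y, y\rangle/2}$, together with the normalized Lebesgue measure $d_N y$, reconstitutes the standard Gaussian measure $d\mu(y)$. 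The convolution thus collapses into a Gaussian expectation $\int_{{\mathbb R}^d} e^{\langle w, y\rangle}\, d\mu(y)$, with $w = w(x, \xi, \eta)$ an explicit affine function of $x$.

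I would then evaluate this expectation by the moment-generating identity $\int_{{\mathbb R}^d} e^{\langle w, y\rangle}\, d\mu(y) = e^{\langle w, w\rangle/2}$, add back the $y$-free part of the exponent, and simplify. Setting $a := \sqrt{u/v}$ and $b := \sqrt{v/u}$, so that $ab = 1$, and using the relations $1 - (1/v) = 1/u$ and $1 - (1/u) = 1/v$, a short bookkeeping of the coefficients of $\langle \xi, \xi\rangle$, $\langle \eta, \eta\rangle$, $\langle \xi, \eta\rangle$, $\langle x, x\rangle$, $\langle \xi, x\rangle$, and $\langle \eta, x\rangle$ shows that the total exponent equals $\tfrac{1}{\sqrt{uv}}\langle \xi + \eta, x\rangle - \tfrac12\langle \xi + \eta, \xi + \eta\rangle - \tfrac{1}{2uv}\langle x, x\rangle$. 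This is precisely the exponent of $({\mathcal E}_{\xi + \eta})(x/\sqrt{uv})\, e^{-\langle x, x\rangle/(2uv)}$, which by ${\mathcal E}_\xi \diamond {\mathcal E}_\eta = {\mathcal E}_{\xi + \eta}$ is the right-hand side of (\ref{fundamental}).

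The one genuinely delicate point is that, because $\xi$ and $\eta$ are complex, the vector $w$ is complex-valued. I would address this by noting that the convolution integral converges for all complex $\xi, \eta$ (the real part of the integrand is dominated by the $-\langle y, y\rangle/2$ term) and that both sides of the moment-generating identity are entire functions of $w$, so the identity extends from real $w$ to complex $w$ by analytic continuation. Finally, the concluding assertion of the lemma---that (\ref{convWick}) holds for linear combinations of exponentials---follows at once by replacing $\varphi$ and $\psi$ with $\Gamma(1/\sqrt{u})\varphi$ and $\Gamma(1/\sqrt{v})\psi$ in (\ref{fundamental}) and using $\Gamma(\sqrt{u})\Gamma(1/\sqrt{u}) = \Gamma(1) = I$, so no further computation is required.
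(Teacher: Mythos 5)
Your proposal is correct and follows essentially the same route as the paper's proof: reduce by bilinearity to $\varphi = {\mathcal E}_\xi$, $\psi = {\mathcal E}_\eta$, observe that the hypothesis $(1/u)+(1/v)=1$ makes the coefficient of $\langle y, y\rangle$ equal to $-\tfrac12$, and evaluate the resulting Gaussian integral (your moment-generating identity is exactly the paper's completion of the square). If anything, you are more careful than the paper on one point: the paper silently shifts the contour by the complex vector $w$ when completing the square, whereas you justify this by analytic continuation in $w$.
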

\begin{proof}
Since both sides of (\ref{fundamental}) are bilinear with respect to
$\varphi$ and $\psi$, it is enough to check the relation for
$\varphi(x) = e^{\langle \xi, x \rangle - (1/2)\langle \xi, \xi
\rangle}$ and $\psi(x) = e^{\langle \eta, x \rangle - (1/2)\langle
\eta, \eta \rangle}$, where $\xi$ and $\eta$ are arbitrarily fixed
vectors in ${\mathbb C}^d$. Indeed, for these functions we have:
\begin{eqnarray*}
& \ & \left\{\left[\Gamma(\sqrt{u})\varphi\right]
\left(\frac{\cdot}{\sqrt{v}}\right)e^{-\frac{\langle \cdot, \cdot
\rangle}{2v}}\right\} \star \left\{\left[\Gamma(\sqrt{v})\psi\right]
\left(\frac{\cdot}{\sqrt{u}}\right)e^{-\frac{\langle \cdot,
\cdot \rangle}{2u}}\right\}(x)\\
& = & \left[e^{\frac{\sqrt{u}}{\sqrt{v}}\langle \xi, \cdot \rangle -
\frac{u}{2}\langle \xi, \xi \rangle - \frac{\langle \cdot, \cdot
\rangle}{2v}}\right] \star \left[e^{\frac{\sqrt{v}}{\sqrt{u}}\langle
\eta, \cdot \rangle - \frac{v}{2}\langle \eta, \eta \rangle -
\frac{\langle \cdot, \cdot \rangle}{2u}}\right](x)\\
& = & \int_{{\mathbb R}^d}e^{\frac{\sqrt{u}}{\sqrt{v}}\langle \xi, x
- y \rangle - \frac{u}{2}\langle \xi, \xi \rangle - \frac{\langle x
- y, x - y \rangle}{2v}} \cdot e^{\frac{\sqrt{v}}{\sqrt{u}}\langle
\eta, y \rangle - \frac{v}{2}\langle \eta, \eta \rangle -
\frac{\langle y,
y \rangle}{2u}}d_Ny\\
& = & e^{\frac{\sqrt{u}}{\sqrt{v}}\langle \xi, x \rangle -
\frac{u}{2}\langle \xi, \xi \rangle - \frac{v}{2}\langle \eta, \eta
\rangle - \frac{1}{2v}\langle x, x \rangle}\int_{{\mathbb
R}^d}e^{-\frac{1}{2}\langle y, y \rangle + \left\langle
-\frac{\sqrt{u}}{\sqrt{v}}\xi + \frac{\sqrt{v}}{\sqrt{u}}\eta +
\frac{1}{v}x, y \right\rangle}d_Ny.
\end{eqnarray*}
We now perform the classic trick of completing the square, in the
exponential, by subtracting and adding $(1/2)\langle
-(\sqrt{u}/\sqrt{v})\xi + (\sqrt{v}/\sqrt{u})\eta + (1/v)x,
-(\sqrt{u}/\sqrt{v})\xi + (\sqrt{v}/\sqrt{u})\eta + (1/v)x \rangle$,
a factor that does not depend on the variable of integration $y$,
and can be taken out of the integral. Thus, we obtain:
\begin{eqnarray*}
& \ & \left\{\left[\Gamma(\sqrt{u})\varphi\right]
\left(\frac{\cdot}{\sqrt{v}}\right)e^{-\frac{\langle \cdot, \cdot
\rangle}{2v}}\right\} \star \left\{\left[\Gamma(\sqrt{v})\psi\right]
\left(\frac{\cdot}{\sqrt{u}}\right)e^{-\frac{\langle \cdot,
\cdot \rangle}{2u}}\right\}(x)\\
& = & e^{\frac{\sqrt{u}}{\sqrt{v}}\langle \xi, x \rangle -
\frac{u}{2}\langle \xi, \xi \rangle - \frac{v}{2}\langle \eta, \eta
\rangle - \frac{1}{2v}\langle x, x \rangle} \cdot
e^{\frac{1}{2}\left\langle -\frac{\sqrt{u}}{\sqrt{v}}\xi +
\frac{\sqrt{v}}{\sqrt{u}}\eta + \frac{1}{v}x,
-\frac{\sqrt{u}}{\sqrt{v}}\xi + \frac{\sqrt{v}}{\sqrt{u}}\eta +
\frac{1}{v}x \right\rangle}\\
& \ & \times \int_{{\mathbb R}^d}e^{-\frac{1}{2}\left\langle y -
\frac{\sqrt{u}}{\sqrt{v}}\xi + \frac{\sqrt{v}}{\sqrt{u}}\eta +
\frac{1}{v}x , y - \frac{\sqrt{u}}{\sqrt{v}}\xi +
\frac{\sqrt{v}}{\sqrt{u}}\eta + \frac{1}{v}x \right\rangle}d_Ny\\
& = & e^{-\frac{u}{2}\left(1 - \frac{1}{v}\right)\langle \xi, \xi
\rangle - \langle \xi, \eta \rangle - \frac{v}{2}\left(1 -
\frac{1}{u}\right)\langle \eta, \eta \rangle +
\frac{\sqrt{u}}{\sqrt{v}}\left(1 - \frac{1}{v}\right)\langle \xi, x
\rangle + \frac{1}{\sqrt{uv}}\langle \eta, x \rangle -
\frac{1}{2v}\left(1 - \frac{1}{v}\right)\langle x, x \rangle}
\int_{{\mathbb R}^d}e^{-\frac{1}{2}\langle z, z \rangle}d_Nz\\
& = & e^{-\frac{u}{2}\cdot \frac{1}{u}\langle \xi, \xi \rangle -
\langle \xi, \eta \rangle - \frac{v}{2} \cdot \frac{1}{v}\langle
\eta, \eta \rangle + \frac{\sqrt{u}}{\sqrt{v}} \cdot
\frac{1}{u}\langle \xi, x \rangle + \frac{1}{\sqrt{uv}} \cdot
\langle \eta, x \rangle -
\frac{1}{2v} \cdot \frac{1}{u}\langle x, x \rangle} \cdot 1\\
& = & e^{-\frac{1}{2}\langle \xi + \eta, \xi + \eta \rangle +
\langle \xi + \eta, \frac{1}{\sqrt{uv}}x \rangle - \frac{1}{2uv}
\cdot \langle x, x \rangle}\\
& = & (\varphi \diamond
\psi)\left(\frac{x}{\sqrt{uv}}\right)e^{-\frac{\langle x, x
\rangle}{2uv}},
\end{eqnarray*}
since $(\varphi \diamond \psi)(x) = e^{\langle \xi + \eta, x \rangle
- (1/2)\langle \xi + \eta, \xi + \eta \rangle}$.
\end{proof}

A complete proof of Lemma \ref{convolution} is the following:

\begin{proof}
Since ${\mathcal E}$ is dense in $L^1({\mathbb R}^d$, $\mu)$, there
exist two sequences $\{f_n\}_{n \geq 1}$ and $\{g_n\}_{n \geq 1}$ of
elements of ${\mathcal E}$, such that: $\parallel f_n - \varphi
\parallel_1 \to 0$ and $\parallel g_n - \psi
\parallel_1 \to 0$, as $n \to \infty$. For each $n \geq 1$, we have:
\begin{eqnarray*}
& \ & \left[f_n \left(\frac{x}{\sqrt{v}}\right)e^{-\frac{\langle x,
x \rangle }{2v}}\right] \star \left[g_n
\left(\frac{x}{\sqrt{u}}\right)e^{-\frac{\langle x, x
\rangle}{2u}}\right]
\nonumber\\
& = & \left[\Gamma\left(\frac{1}{\sqrt{u}}\right)f_n \diamond
\Gamma\left(\frac{1}{\sqrt{v}}\right)g_n\right]
\left(\frac{x}{\sqrt{uv}}\right)e^{-\frac{\langle x, x
\rangle}{2uv}}.
\end{eqnarray*}
The left--hand side converges to $\displaystyle \left[\varphi
\left(\frac{x}{\sqrt{v}}\right)e^{-\frac{\langle x, x
\rangle}{2v}}\right] \star \left[\psi
\left(\frac{x}{\sqrt{u}}\right)e^{-\frac{\langle x, x
\rangle}{2u}}\right]$, in $L^1({\mathbb R}^d$, $\mu)$, as $n \to
\infty$, by Young inequality for the normalized Lebesgue measure.
The right--hand side converges to
$\left[\Gamma\left(\frac{1}{\sqrt{u}}\right)\varphi \diamond
\Gamma\left(\frac{1}{\sqrt{v}}\right)\psi\right]\left(\frac{x}{\sqrt{uv}}\right)
e^{-\frac{\langle x, x \rangle}{2uv}}$, by Lanconelli--Stan
inequality, from \cite{ls2010}, about the $L^1$ norms for Wick
products.
\end{proof}

\section{Main results}

\begin{theorem} {\bf (Finite dimensional H\"older inequality for Gaussian Wick
products.)} \ Let $d \in {\mathbb N}$ and $p \in [1$, $\infty]$ be
fixed. Let $\mu$ denote the standard Gaussian probability measure on
${\mathbb R}^d$. Let $u$ and $v$ be positive numbers, such that
$(1/u) + (1/v) = 1$. Then for any $\varphi$ and $\psi$ in
$L^p({\mathbb R}^d$, $\mu)$, $\Gamma(1/\sqrt{u})\varphi \diamond
\Gamma(1/\sqrt{v})\psi \in L^p({\mathbb R}^d$, $\mu)$ and the
following inequality holds:
\begin{eqnarray}
\left\|\Gamma\left(\frac{1}{\sqrt{u}}\right)\varphi \diamond
\Gamma\left(\frac{1}{\sqrt{v}}\right)\psi\right\|_p & \leq &
\|\varphi \|_p \cdot \| \psi \|_p. \label{Holder}
\end{eqnarray}
\end{theorem}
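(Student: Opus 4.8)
The plan is to transport the Gaussian Wick inequality to the Lebesgue side via Lemma \ref{convolution}, where it becomes an instance of the classic H\"older inequality. First I would use the dictionary established in Section 3: for a function $f$ on ${\mathbb R}^d$, its $L^p$ norm with respect to the Gaussian measure $\mu$ equals $\||f(x)e^{-\langle x,x\rangle/(2p)}|\|_p$, the normalized-Lebesgue $L^p$ norm of the twisted function. Applying Lemma \ref{convolution} to the Wick product on the left-hand side of \eqref{Holder}, I would rewrite the quantity I wish to bound as a convolution with respect to $d_Nx$. The key point is to arrange the Gaussian-to-Lebesgue exponential weights so that the weight attached to the Wick product matches exactly the convolution kernel produced by Lemma \ref{convolution}.

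Next I would compute the relevant Lebesgue $L^p$ norm of the convolution. Writing $F(x) := \varphi(x/\sqrt{v})e^{-\langle x,x\rangle/(2v)}$ and $G(x) := \psi(x/\sqrt{u})e^{-\langle x,x\rangle/(2u)}$, Lemma \ref{convolution} identifies $F\star G$, evaluated and rescaled, with the weighted Wick product. After unwinding the scalings $x\mapsto x/\sqrt{v}$, $x\mapsto x/\sqrt{u}$ and the factor $1/\sqrt{uv}$, a change of variables shows that the $L^p(\mu)$ norm of $\Gamma(1/\sqrt{u})\varphi\diamond\Gamma(1/\sqrt{v})\psi$ coincides (up to the intrinsic normalization of $d_Nx$) with $\||F\star G|\|_p$, while $\|\varphi\|_p = \||F|\|_p$ and $\|\psi\|_p = \||G|\|_p$ after the corresponding substitutions. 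Thus the Gaussian inequality \eqref{Holder} is equivalent to the statement $\||F\star G|\|_p \le \||F|\|_p\cdot\||G|\|_p$.

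The heart of the argument is then the pointwise bound on the convolution. For $1<p<\infty$ I would apply H\"older's inequality inside the convolution integral: writing $(F\star G)(x)=\int F(x-y)G(y)\,d_Ny$ and splitting the integrand, one estimates $|(F\star G)(x)|\le \int |F(x-y)|\,|G(y)|\,d_Ny$, and a careful insertion of H\"older with exponents designed to reproduce $L^p$ norms---exploiting that $d_Ny$ is a scaled probability-like measure and that the Gaussian weights in $F$ and $G$ combine to furnish the missing normalization---yields the claimed product bound. The endpoint cases $p=1$ and $p=\infty$ are handled directly: $p=1$ is precisely the Lanconelli--Stan $L^1$ estimate already invoked in the proof of Lemma \ref{convolution}, and $p=\infty$ follows from the elementary sup-norm bound on a convolution against a probability density.

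The step I expect to be the main obstacle is the bookkeeping in the second paragraph: the three distinct scalings $1/\sqrt{u}$, $1/\sqrt{v}$, $1/\sqrt{uv}$ together with the Gaussian exponential weights must be matched so that the H\"older exponents come out exactly right and the normalization constant is $1$ rather than some residual power of $u$ or $v$. Once that alignment is verified, the inequality reduces transparently to classical H\"older, and sharpness follows by tracing the equality case of H\"older back through Lemma \ref{connection}, where exponential functions $\mathcal{E}_\xi$ produce equality.
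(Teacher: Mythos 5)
Your overall strategy---transport the Wick product to the Lebesgue side through Lemma \ref{convolution} and estimate the resulting convolution---is indeed the paper's strategy, but the reduction you state in your second paragraph contains a genuine error that breaks the argument. First, the identification $\|\varphi\|_p = \||F|\|_p$ with $F(x)=\varphi(x/\sqrt{v})e^{-\langle x,x\rangle/(2v)}$ contradicts the dictionary you yourself quote in your first paragraph: the Gaussian $L^p$ norm of $\varphi$ corresponds to the Lebesgue $L^p$ norm of $\varphi(x)e^{-\langle x,x\rangle/(2p)}$, i.e.\ only the fraction $1/p$ of the Gaussian weight may be absorbed into the function, so your identification holds only for $p=1$. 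Second, and more seriously, the statement you reduce to, $\||F\star G|\|_p \le \||F|\|_p\cdot\||G|\|_p$, is not the classic H\"older inequality and is false in general: for the Lebesgue convolution, Young's inequality gives $\||F\star G|\|_p\le\||F|\|_1\cdot\||G|\|_p$, and the scaling $F(x)\mapsto F(\lambda x)$, $G(x)\mapsto G(\lambda x)$ with $\lambda\to 0$ shows that $\||F\star G|\|_p\le\||F|\|_p\cdot\||G|\|_p$ cannot hold for $p>1$. So the claimed equivalence is not a correct reduction, and no choice of H\"older exponents applied to $F\star G$ at that level of generality can produce it.

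What actually makes the proof work---and what is missing from your proposal---is a specific splitting of the Gaussian weights by the conjugate exponent. Writing $1 = 1/p + 1/p'$, one absorbs only $e^{-\langle\cdot,\cdot\rangle/(2pv)}$ into $f$ and $e^{-\langle\cdot,\cdot\rangle/(2pu)}$ into $g$ (so that $\||f|\|_p$ and $\||g|\|_p$ equal the Gaussian norms $\|\varphi\|_p$ and $\|\psi\|_p$ up to the scaling factors $v^{d/(2p)}$ and $u^{d/(2p)}$), multiplies the identity (\ref{convWick}) by $e^{\langle x,x\rangle/(2p'uv)}$, and observes that the three leftover exponential factors combine, precisely because $1/u+1/v=1$, into the perfect square $e^{-\frac{1}{2p'}\langle y-\frac{1}{v}x,\,y-\frac{1}{v}x\rangle}$. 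H\"older with exponents $(p,p')$ is then applied \emph{pointwise inside the $y$-integral}, pairing $|f(x-y)g(y)|$ against this Gaussian kernel, whose $L^{p'}(d_Ny)$ norm is exactly $1$; what remains is $\left(\int_{{\mathbb R}^d}|f(x-y)|^p|g(y)|^p\,d_Ny\right)^{1/p}$, and raising to the power $p$, integrating in $x$, and applying Tonelli (the trivial $L^1$ bound, not an $L^p$ convolution inequality) yields $\||f|\|_p^p\cdot\||g|\|_p^p$, after which the residual powers of $u$ and $v$ cancel exactly when converting back to Gaussian norms. Your third paragraph gestures toward the Gaussian weights ``furnishing the missing normalization,'' but without the conjugate-exponent splitting, the perfect-square identity for the leftover kernel, and the final Fubini step, the estimate does not reduce to any classical inequality you can invoke.
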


\begin{proof} Let $p'$ be the conjugate of $p$, i.e., $(1/p) + (1/p') = 1$.
Let's multiply both sides of formula (\ref{convWick}) by $e^{\langle
x, x \rangle/(2p'uv)}$. We get:
\begin{eqnarray*}
& \ & \left[\Gamma\left(\frac{1}{\sqrt{u}}\right)\varphi \diamond
\Gamma\left(\frac{1}{\sqrt{v}}\right)\psi\right]
\left(\frac{x}{\sqrt{uv}}\right)e^{-\frac{\langle
x, x \rangle}{2uv}\left(1 - \frac{1}{p'}\right)}\\
& = & e^{\frac{\langle x, x \rangle}{2p'uv}} \left\{\left[\varphi
\left(\frac{x}{\sqrt{v}}\right)e^{-\frac{\langle x, x
\rangle}{2v}}\right] \star
\left[\psi\left(\frac{x}{\sqrt{u}}\right)e^{-\frac{\langle x, x
\rangle}{2u}}\right]\right\}\\
& = & \int_{{\mathbb R}^d}\left[\varphi\left(\frac{x -
y}{\sqrt{v}}\right)e^{-\frac{\langle x - y, x - y
\rangle}{2pv}}\right]e^{-\frac{\langle x - y, x - y \rangle}{2p'v}}
\left[\psi\left(\frac{y}{\sqrt{u}}\right)e^{-\frac{\langle y, y
\rangle}{2pu}}\right]e^{-\frac{\langle y, y \rangle}{2p'u}} \cdot
e^{\frac{\langle x, x \rangle}{2p'uv}}d_Ny.
\end{eqnarray*}
Let $f(x) := \varphi((1/\sqrt{v})x)\exp(-[1/(2pv)]\langle x, x
\rangle)$ and $g(x) := \psi((1/\sqrt{u})x)e(-[1/(2pu)]\langle x, x
\rangle)$. With these notations we get:
\begin{eqnarray*}
& \ & \left[\Gamma\left(\frac{1}{\sqrt{u}}\right)\varphi \diamond
\Gamma\left(\frac{1}{\sqrt{v}}\right)\psi\right]
\left(\frac{x}{\sqrt{uv}}\right)e^{-\frac{\langle
x, x \rangle}{2uv} \cdot \frac{1}{p}}\\
& = & \int_{{\mathbb R}^d}f(x - y)g(y)e^{-\frac{1}{2p'}\left\langle
y - \frac{1}{v}x, y - \frac{1}{v}x\right\rangle}d_Ny.
\end{eqnarray*}
Putting the modulus in both sides, then introducing it in the
integral in the right (triangle inequality), and then applying the
H\"older inequality to the pair $(p$, $p')$, we get:
\begin{eqnarray*}
& \ & \left|\left[\Gamma\left(\frac{1}{\sqrt{u}}\right)\varphi
\diamond \Gamma\left(\frac{1}{\sqrt{v}}\right)\psi\right]
\left(\frac{x}{\sqrt{uv}}\right)e^{-\frac{\langle
x, x \rangle}{2puv}}\right|\\
& \leq & \left[\int_{{\mathbb R}^d}|f(x - y)g(y)|^p
d_Ny\right]^{\frac{1}{p}} \left[\int_{{\mathbb
R}^d}\left(e^{-\frac{1}{2p'}\left\langle y - \frac{1}{v}x, y -
\frac{1}{v}x\right\rangle}\right)^{p'}d_Ny\right]^{\frac{1}{p'}}\\
& \leq & \left[\int_{{\mathbb R}^d}|f(x - y)|^p|g(y)|^p
d_Ny\right]^{\frac{1}{p}} \cdot 1.
\end{eqnarray*}
Let us first raise the last inequality to the power $p$, then
integrate it with respect to $x$, and apply Fubini (actually
Tonelli) theorem. We obtain:
\begin{eqnarray*}
& \ & \int_{{\mathbb R}^d}
\left|\left[\Gamma\left(\frac{1}{\sqrt{u}}\right)\varphi \diamond
\Gamma\left(\frac{1}{\sqrt{v}}\right)\psi\right]
\left(\frac{x}{\sqrt{uv}}\right)e^{-\frac{\langle
x, x \rangle}{2puv}}\right|^pd_Nx\\
& \leq & \int_{{\mathbb R}^d}\left[\int_{{\mathbb R}^d}|f(x -
y)|^p|g(y)|^p
d_Ny\right]d_Nx\\
& = & \int_{{\mathbb R}^d} |g(y)|^p \left[\int_{{\mathbb R}^d} |f(x
- y)|^p
d_Nx\right]d_Ny\\
& = & \int_{{\mathbb R}^d} |g(y)|^p \||f|\|_p^pd_Ny\\
& = & \||f|\|_p^p\int_{{\mathbb R}^d} |g(y)|^p d_Ny\\
& = & \||f|\|_p^p \cdot \||g|\|_p^p.
\end{eqnarray*}
Raising both sides of this inequality to the power $1/p$, we get:
\begin{eqnarray*}
& \ & \left[\int_{{\mathbb R}^d}
\left|\left[\Gamma\left(\frac{1}{\sqrt{u}}\right)\varphi \diamond
\Gamma\left(\frac{1}{\sqrt{v}}\right)\psi\right]
\left(\frac{x}{\sqrt{uv}}\right)\right|^pe^{-\frac{\langle x, x
\rangle}{2uv}}d_Nx\right]^{1/p}\\
& \leq & \||f|\|_p \cdot \||g|\|_p\\
& = & \left[\int_{{\bf
R}^d}\left|\varphi\left(\frac{x}{\sqrt{v}}\right)\right|^pe^{-\frac{\langle
x, x \rangle}{2v}}d_Nx\right]^{1/p} \cdot \left[\int_{{\bf
R}^d}\left|\psi\left(\frac{x}{\sqrt{u}}\right)\right|^pe^{-\frac{\langle
x, x \rangle}{2u}}d_Nx\right]^{1/p}.
\end{eqnarray*}
Making now the changes of variable: $x_1 := (1/\sqrt{uv})x$ in the
integral
\begin{eqnarray*}
& \ & \int_{{\mathbb R}^d}
\left|\left[\Gamma\left(\frac{1}{\sqrt{u}}\right)\varphi \diamond
\Gamma\left(\frac{1}{\sqrt{v}}\right)\psi\right]
\left(\frac{x}{\sqrt{uv}}\right)\right|^pe^{-\frac{\langle x, x
\rangle}{2uv}}d_Nx,
\end{eqnarray*}
$x' := (1/\sqrt{v})x$ and $x'' := (1/\sqrt{u})x$ in the integrals:
\begin{eqnarray*}
& \ & \int_{{\bf
R}^d}\left|\varphi\left(\frac{x}{\sqrt{v}}\right)\right|^pe^{-\frac{\langle
x, x \rangle}{2v}}d_Nx
\end{eqnarray*}
and
\begin{eqnarray*}
& \ & \int_{{\bf
R}^d}\left|\psi\left(\frac{x}{\sqrt{u}}\right)\right|^pe^{-\frac{\langle
x, x \rangle}{2u}}d_Nx,
\end{eqnarray*}
respectively, and dividing both sides by $(uv)^{d/(2p)}$, since
$d\mu = \exp(-\langle x, x \rangle/2)d_Nx$, we get:
\begin{eqnarray*}
\left\|\Gamma\left(\frac{1}{\sqrt{u}}\right)\varphi \diamond
\Gamma\left(\frac{1}{\sqrt{v}}\right)\psi\right\|_p & \leq &
\|\varphi\|_p \cdot \|\psi\|_p.
\end{eqnarray*}
\end{proof}

\begin{theorem} {\bf (General H\"older inequality for
Gaussian Wick products.)} \ Let $H$ be a separable Gaussian Hilbert
space, $p \in [1$, $\infty]$, and $u$ and $v$ positive numbers, such
that $(1/u) + (1/v) = 1$. Let ${\mathcal F}(H)$ be the
sigma--algebra generated by the random variables $h$ from $H$. Then
for any $\varphi$ and $\psi$ in $L^p(\Omega$, ${\mathcal F}(H)$,
$P)$, $\Gamma(1/\sqrt{u})\varphi \diamond \Gamma(1/\sqrt{v})\psi \in
L^p(\Omega$, ${\mathcal F}(H)$, $P)$ and the following inequality
holds:
\begin{eqnarray}
\left\|\Gamma\left(\frac{1}{\sqrt{u}}\right)\varphi \diamond
\Gamma\left(\frac{1}{\sqrt{v}}\right)\psi\right\|_p & \leq &
\|\varphi\|_p \cdot \|\psi\|_p.
\end{eqnarray}
\end{theorem}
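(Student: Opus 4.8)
The plan is to deduce the infinite-dimensional statement from the finite-dimensional Hölder inequality (\ref{Holder}) just proven, by a conditional-expectation approximation combined with Fatou's lemma, exactly as announced in the abstract. Fix an orthonormal basis $\{e_i\}_{i\ge 1}$ of the separable Gaussian Hilbert space $H$, put $H_d=\mathrm{span}(e_1,\dots,e_d)$, let $\mathcal{F}_d=\mathcal{F}(H_d)$, and denote by $\mathbb{E}_d=E[\,\cdot\mid\mathcal{F}_d]$ the associated conditional expectation. Since the $e_i$ are independent standard Gaussians, the space $L^p(\Omega,\mathcal{F}_d,P)$ is canonically the space $L^p(\mathbb{R}^d,\mu)$ appearing in the previous theorem, so inequality (\ref{Holder}) applies verbatim to any pair of $\mathcal{F}_d$-measurable functions. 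Moreover $\bigvee_d\mathcal{F}_d=\mathcal{F}(H)$, since $H$ is the closed span of the $e_i$.

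First I would record three properties of $\mathbb{E}_d$. It is a contraction on every $L^p(\Omega,\mathcal{F}(H),P)$, $1\le p\le\infty$; it commutes with each second quantization operator, $\mathbb{E}_d\Gamma(c)=\Gamma(c)\mathbb{E}_d$; and it is multiplicative for the Wick product, $\mathbb{E}_d(\alpha\diamond\beta)=(\mathbb{E}_d\alpha)\diamond(\mathbb{E}_d\beta)$. The last two identities are quickest to verify on the total family of exponential functions: writing $\xi^{(d)}$ for the orthogonal projection of $\xi$ onto $H_{d,c}$, independence of the $e_i$ gives $\mathbb{E}_d{\mathcal E}_\xi={\mathcal E}_{\xi^{(d)}}$, after which ${\mathcal E}_\xi\diamond{\mathcal E}_\eta={\mathcal E}_{\xi+\eta}$ and $\Gamma(c){\mathcal E}_\xi={\mathcal E}_{c\xi}$ immediately yield $\mathbb{E}_d({\mathcal E}_\xi\diamond{\mathcal E}_\eta)={\mathcal E}_{(\xi+\eta)^{(d)}}={\mathcal E}_{\xi^{(d)}}\diamond{\mathcal E}_{\eta^{(d)}}$ and $\mathbb{E}_d\Gamma(c){\mathcal E}_\xi=\Gamma(c)\mathbb{E}_d{\mathcal E}_\xi$. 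One also needs the consistency fact that the Wick product of two $\mathcal{F}_d$-measurable functions computed inside $\mathcal{F}(H)$ agrees with the one computed inside $\mathcal{F}(H_d)$; this holds because $G_n(H_d)\subset G_n(H)$ with compatible chaos projections, so both products are the continuous extensions, from the common dense set of exponential linear combinations, of one and the same bilinear map.

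Then I would set $\varphi_d=\mathbb{E}_d\varphi$, $\psi_d=\mathbb{E}_d\psi$ and $W_d=\Gamma(1/\sqrt{u})\varphi_d\diamond\Gamma(1/\sqrt{v})\psi_d$. By the consistency remark $W_d$ coincides with $\tilde{T}_{u,v}(\varphi_d,\psi_d)$ computed in the full space, and the finite-dimensional theorem together with the contractivity of $\mathbb{E}_d$ gives the uniform bound $\|W_d\|_p\le\|\varphi_d\|_p\,\|\psi_d\|_p\le\|\varphi\|_p\,\|\psi\|_p$. Since $L^p\subset L^1$ on a probability space, the upward martingale convergence of conditional expectations yields $\varphi_d\to\varphi$ and $\psi_d\to\psi$ in $L^1$; invoking the continuity of the extended bilinear operator $\tilde{T}_{u,v}$ on $L^1\times L^1$ recorded in Section 2 (from \cite{ls2010}), one gets $W_d=\tilde{T}_{u,v}(\varphi_d,\psi_d)\to\tilde{T}_{u,v}(\varphi,\psi)=\Gamma(1/\sqrt{u})\varphi\diamond\Gamma(1/\sqrt{v})\psi$ in $L^1$. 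Extracting an a.e.-convergent subsequence $W_{d_k}\to W$ and applying Fatou's lemma gives $\|W\|_p\le\liminf_k\|W_{d_k}\|_p\le\|\varphi\|_p\,\|\psi\|_p$; the same a.e. limit identifies $W$ with the Wick product and shows it lies in $L^p$. For $p=\infty$ the identical scheme works, Fatou being replaced by the elementary fact that an a.e. limit of functions bounded by $M$ is itself bounded by $M$.

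I expect the main obstacle to be the bookkeeping of the second paragraph rather than the limiting passage: one must check carefully that the finite-dimensional Wick product and second quantization operators are genuinely the restrictions of their infinite-dimensional counterparts, so that applying the previous theorem to $\varphi_d,\psi_d$ really controls the object $\tilde{T}_{u,v}(\varphi_d,\psi_d)$ sitting in the full space, and that this agreement survives the $L^1$-extension used when $p<2$. Once the commutation identities $\mathbb{E}_d\Gamma(c)=\Gamma(c)\mathbb{E}_d$ and $\mathbb{E}_d(\alpha\diamond\beta)=(\mathbb{E}_d\alpha)\diamond(\mathbb{E}_d\beta)$ and the consistency of the two Wick products are in place, the convergence $W_d\to W$ and the Fatou estimate are routine.
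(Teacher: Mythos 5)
Your proposal is correct and follows essentially the same route as the paper: reduce to the finite-dimensional theorem, use the $L^1\times L^1$ continuity of the Wick bilinear map $\tilde{T}_{u,v}$ from \cite{ls2010} to get $L^1$ convergence of the Wick products, pass to an almost surely convergent subsequence, and conclude with Fatou's lemma and the finite-dimensional bound. The only real difference is the choice of approximating sequence — the paper takes arbitrary $\varphi_n,\psi_n\in\cup_d L^p(\Omega,\mathcal{F}_d,P)$ with $\|\varphi_n-\varphi\|_p\to 0$ (dense by a cylinder-set argument) instead of the conditional expectations $\mathbb{E}_d\varphi,\mathbb{E}_d\psi$, but your martingale device is exactly what the paper itself uses later in the proof of Theorem \ref{fullholdertheorem}, and your extra care about the finite/infinite-dimensional consistency of the Wick product and about the case $p=\infty$ (where the paper's $E[|\cdot|^p]$ formulation of Fatou does not literally apply) only tightens points the paper leaves implicit.
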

\begin{proof}
Let $\{e_n\}_{n \geq 1}$ be an orthonormal basis of $H$. Then
$\{e_n\}_{n \geq 1}$ is a set of independent, normally distributed
random variables with mean $0$ and variance $1$. For every $d \geq
1$, let ${\mathcal F}_d$ denote the sigma--algebra generated by the
random variables $e_1$, $e_2$, $\dots$, $e_d$. It is well--known
that every function $f$ from $L^p(\Omega$, ${\mathcal F}(H)$, $P)$
can be approximated in the $p$ norm by a sequence of functions $f_n$
from $L^p({\mathcal F}_n) := L^p(\Omega$, ${\mathcal F}_n$, $P)$, $n
\geq 1$. This is due to the fact that the sigma--algebra ${\mathcal
F}(H)$ is generated by the cylinder sets, and every cylinder set is
in one of the sigma--algebras ${\mathcal F}_d$, for some $d \geq 1$.
There exist two sequences $\{\varphi_n\}_{n \geq 1}$ and
$\{\psi_n\}_{n \geq 1}$ contained in $\cup_{n \geq 1}L^{p}({\mathcal
F}_n)$, such that $\|\varphi_n - \varphi\|_p \to 0$ and $\|\psi_n -
\psi\|_p \to 0$, as $n \to \infty$. These facts imply two things:
first $\|\varphi_n\|_p \to \|\varphi\|_p$ and $\|\psi_n\|_p \to
\|\psi\|_p$, as $n \to \infty$, and second $\varphi_n \to \varphi$
and $\psi_n \to \psi$, in $L^1$, as $n \to \infty$. We know from
\cite{ls2010} that $\Gamma(1/\sqrt{u})\varphi_n \diamond
\Gamma(1/\sqrt{v})\psi_n \to \Gamma(1/\sqrt{u})\varphi \diamond
\Gamma(1/\sqrt{v})\psi$, in $L^1$, as $n \to \infty$. Since $L^1$
convergence implies almost sure convergence for a subsequence,
working eventually with a subsequence, we may assume that
$\Gamma(1/\sqrt{u})\varphi_n \diamond \Gamma(1/\sqrt{v})\psi_n \to
\Gamma(1/\sqrt{u})\varphi \diamond \Gamma(1/\sqrt{v})\psi$, almost
surely, as $n \to \infty$. Using now Fatou's lemma and the finite
dimensional inequality proven in the previous theorem, we have:
\begin{eqnarray*}
\left\|\Gamma\left(\frac{1}{\sqrt{u}}\right)\varphi \diamond
\Gamma\left(\frac{1}{\sqrt{v}}\right)\psi \right\|_p^p & = &
E\left[\left|\Gamma\left(\frac{1}{\sqrt{u}}\right)\varphi \diamond
\Gamma\left(\frac{1}{\sqrt{v}}\right)\psi\right|^p\right]\\
& = & E\left[\liminf_{n \to
\infty}\left\{\left|\Gamma\left(\frac{1}{\sqrt{u}}\right)\varphi_n
\diamond \Gamma\left(\frac{1}{\sqrt{v}}\right)\psi_n\right|^p\right\}\right]\\
& \leq & \liminf_{n \to
\infty}E\left[\left|\Gamma\left(\frac{1}{\sqrt{u}}\right)\varphi_n
\diamond
\Gamma\left(\frac{1}{\sqrt{v}}\right)\psi_n\right|^p\right]\\
& \leq & \liminf_{n \to
\infty}\left\{E\left[\left|\varphi_n\right|^p\right] \cdot
E\left[\left|\psi_n\right|^p\right]\right\}.\\
& = & E\left[\left|\varphi\right|^p\right] \cdot
E\left[\left|\psi\right|^p\right]\\
& = & \|\varphi\|_p^p \cdot \|\psi\|_p^p.
\end{eqnarray*}
\end{proof}

We are now presenting an interesting connection between the Young
inequality with the best constant, and Nelson hypercontractivity.
The Young inequality with the best constant is the following:

\begin{theorem}
Let $p$, $q$, $r \geq 1$, such that $(1/p) + (1/q) = (1/r) + 1$.
There exists a constant $C_{p, q, r; d} > 0$, such that, for any $f
\in L^p\left({\mathbb R}^d, dx\right)$ and $g \in L^q\left({\mathbb
R}^d, dx\right)$, we have:
\begin{eqnarray}
\left[\int_{{\mathbb R}^d}|(f \star g)(x)|^rdx\right]^{1/r} & \leq &
C_{p, q, r; d}\left[\int_{{\mathbb R}^d}|f(x)|^pdx\right]^{1/p}
\cdot \left[\int_{{\mathbb R}^d}|g(x)|^qdx\right]^{1/q}.
\label{sharpYoung}
\end{eqnarray}
The sharp constant $C_{p, q, r; d}$ equals $(C_pC_q/C_r)^d$, where
$C_k^2 = k^{1/k}/k'^{1/k'}$, for any $k \geq 1$, where $k'$ is the
conjugate of $k$.
\end{theorem}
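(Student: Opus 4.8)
The plan is to establish the inequality in two stages: first the existence of some finite constant, then the identification of the sharp value via Gaussian extremizers. For the first stage I would dualize the $L^r$ norm, writing $\|f \star g\|_r = \sup\{\,\int_{\mathbb{R}^d}(f\star g)(x)\,h(x)\,dx : \|h\|_{r'}\le 1\,\}$, so that the claim becomes a bound on the trilinear form $T(f,g,h)=\int_{\mathbb{R}^d}\int_{\mathbb{R}^d}f(x)g(y)h(x+y)\,dx\,dy$ by $C_{p,q,r;d}\|f\|_p\|g\|_q\|h\|_{r'}$, the exponent relation $(1/p)+(1/q)=(1/r)+1$ turning into $(1/p)+(1/q)+(1/r')=2$. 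Finiteness of the constant then follows from H\"older's inequality at the trivial corners together with Riesz--Thorin interpolation; this already yields Young's inequality, but with a non-optimal constant, and serves mainly to make the supremum defining the extremal ratio well posed.

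The substantial stage is to show that the extremal value of $T$ is attained by Gaussians. Since $|f\star g|\le |f|\star|g|$, I would first reduce to nonnegative $f,g,h$. Next I would invoke the Riesz rearrangement inequality, which (after the harmless substitution $y\mapsto -y$, using that rearrangements are insensitive to reflection) gives $T(f,g,h)\le T(f^{*},g^{*},h^{*})$ for the symmetric decreasing rearrangements while leaving all three norms unchanged; this confines the search for extremizers to radial, decreasing profiles. The heart of the argument is to promote ``symmetric decreasing'' to ``Gaussian'': here I would follow the Brascamp--Lieb dilation and variational analysis of \cite{bl1976} (or, alternatively, the two-point inequality plus central-limit-theorem route of Beckner in \cite{b1975}, or a heat-flow/semigroup interpolation argument) to show that a centered Gaussian maximizes the ratio $T(f,g,h)/(\|f\|_p\|g\|_q\|h\|_{r'})$. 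This single step delivers both the inequality with the sharp constant as an upper bound and its sharpness, since the Gaussian attains it.

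Finally I would compute the ratio explicitly for $f(x)=e^{-a|x|^2}$, $g(x)=e^{-b|x|^2}$ and the induced Gaussian $h$, carry out the resulting Gaussian integrals, and optimize over the widths $a,b$. Because a Gaussian on $\mathbb{R}^d$ factors as a product of one-dimensional Gaussians and the $L^p$ norms as well as the convolution all respect this tensor structure, the $d$-dimensional extremal value is exactly the $d$-th power of the one-dimensional one; the one-dimensional optimization then produces the stated constant $(C_pC_q/C_r)^d$ with $C_k^2=k^{1/k}/(k')^{1/k'}$. I expect the genuine obstacle to be the middle step --- proving that Gaussians, and not merely arbitrary symmetric decreasing functions, are the true maximizers --- since the rearrangement inequality by itself does not single them out, and this is precisely the delicate point resolved in \cite{b1975} and \cite{bl1976}. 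The endpoint cases $p=1$ or $q=1$, where $C_{p,q,r;d}=1$ and the bound degenerates to ordinary Young, would be handled separately, as no Gaussian extremizer exists there.
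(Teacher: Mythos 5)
The paper offers no proof of this theorem: it is stated as a known result, with the proof explicitly deferred to Beckner \cite{b1975}, Brascamp--Lieb \cite{bl1976}, and Lieb--Loss \cite{ll2001} (Theorem 4.2); the only argument the paper supplies in the vicinity is the observation that the sharp constant is unchanged when $dx$ is replaced by $c\,dx$, because $1+(1/r)=(1/p)+(1/q)$. Your outline correctly reproduces the strategy of the cited works --- dualizing to the trilinear form $\int\!\!\int f(x)g(y)h(x+y)\,dx\,dy$ (with the exponent relation becoming $(1/p)+(1/q)+(1/r')=2$), reducing to nonnegative symmetric decreasing functions via Riesz rearrangement, promoting ``symmetric decreasing'' to ``Gaussian,'' and then computing and tensorizing the Gaussian ratio --- and you accurately identify both the genuinely hard step (that Gaussians, not merely radial decreasing functions, are the maximizers) and the degenerate endpoints $p=1$ or $q=1$, where the constant is $1$ and no Gaussian extremizer exists. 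Be aware, however, that your proposal does not actually prove that hard step: you defer it to \cite{bl1976} or \cite{b1975}, which is precisely what the paper itself does for the whole theorem, so in substance your attempt and the paper rest on the same external result, with your added reductions (duality, rearrangement, tensorization) being correct but standard scaffolding surrounding it.
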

See \cite{b1975}, \cite{bl1976}, or \cite{ll2001} (Theorem 4.2) for
a proof. Let us make the observation that if we replace the Lebesgue
measure $dx$, on ${\mathbb R}^d$, by $cdx$, where $c$ is any
positive constant, then, by convoluting with respect to $cdx$, the
best constant $C_{p, q, r: d}$ from inequality (\ref{sharpYoung})
does not change. This is due to the fact that the left--hand side of
(\ref{sharpYoung}) is multiplied by $c \cdot c^{1/r} = c^{1 +
(1/r)}$, while the right--hand side must be multiplied by $c^{1/p}
\cdot c^{1/q} = c^{(1/p) + (1/q)}$, and fortunately $1 + (1/r) =
(1/p) + (1/q)$. Thus, the inequality (\ref{sharpYoung}) remains
valid, with the same sharp constant $C_{p, q, r; d}$, even for the
normalized Lebesgue measure $d_Nx$.

\begin{theorem}\label{Nelson} Let $d$ be a fixed natural number.
Let $p$ and $r$ be positive numbers such that $1 < p \leq r$. Then
for any $\varphi$ in $L^p({\mathbb R}^d$, $\mu)$ and any $\psi$ in
$L^{\infty}({\mathbb R}^d$, $\mu)$, $\Gamma(\sqrt{p - 1}/\sqrt{r -
1})\varphi \diamond \Gamma(\sqrt{r - p}/\sqrt{r - 1})\psi$ belongs
to $L^{r}({\mathbb R}^d$, $\mu)$ and the following inequality holds:
\begin{eqnarray}
\left\|\left[\Gamma\left(\frac{\sqrt{p - 1}}{\sqrt{r -
1}}\right)\varphi\right] \diamond \left[\Gamma\left(\frac{\sqrt{r -
p}}{\sqrt{r - 1}}\right)\psi\right]\right\|_r & \leq & \|\varphi\|_p
\cdot \|\psi\|_{\infty}. \label{N}
\end{eqnarray}
In particular, if we choose $\psi = 1$ (the constant random variable
equal to $1$), then we get the classic Nelson hypercontractivity
inequality:
\begin{eqnarray}
\left\|\Gamma\left(\frac{\sqrt{p - 1}}{\sqrt{r -
1}}\right)\varphi\right\|_r & \leq & \|\varphi\|_p.
\end{eqnarray}
\end{theorem}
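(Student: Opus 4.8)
The plan is to mimic exactly the structure of the proof of the finite-dimensional Hölder inequality (Theorem for \eqref{Holder}), but replace the plain Hölder inequality applied inside the convolution integral with the sharp Young inequality of the preceding theorem. First I would invoke Lemma \ref{convolution} in the form \eqref{convWick}, with the substitution that converts the two second-quantization factors $\Gamma(1/\sqrt{u})$ and $\Gamma(1/\sqrt{v})$ into $\Gamma(\sqrt{p-1}/\sqrt{r-1})$ and $\Gamma(\sqrt{r-p}/\sqrt{r-1})$. Concretely, the identity $(1/u)+(1/v)=1$ must be matched to the hypercontractivity exponents; the natural choice is to set $1/\sqrt{u}=\sqrt{p-1}/\sqrt{r-1}$ and $1/\sqrt{v}=\sqrt{r-p}/\sqrt{r-1}$, so that $(1/u)+(1/v)=(p-1)/(r-1)+(r-p)/(r-1)=1$, which is precisely the constraint demanded by the lemma. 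This is the key bookkeeping step that makes the machine turn.

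Next I would unfold \eqref{convWick} as an explicit convolution integral over $\mathbb{R}^d$ with respect to $d_Nx$, just as in the Hölder proof, writing the Wick product (evaluated at the rescaled point and multiplied by the appropriate Gaussian weight) as the convolution of two functions $f$ and $g$ built from $\varphi$ and $\psi$. The weights must be distributed so that $f$ carries a factor tuned to the exponent $p$ and $g$ carries a factor tuned to $\infty$; because $\psi\in L^\infty$, the factor built from $\psi$ should be bounded pointwise by $\|\psi\|_\infty$ times a pure Gaussian. After pulling $\|\psi\|_\infty$ out, what remains is the $L^r$ norm of a convolution of an $L^p$ function against a fixed Gaussian kernel. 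I would then apply the sharp Young inequality \eqref{sharpYoung} with the triple $(p,q,r)$ where $q$ is forced by the Young constraint $(1/p)+(1/q)=(1/r)+1$, and recognize the Gaussian kernel as an $L^q$ function whose $L^q$-norm, multiplied by the sharp constant $C_{p,q,r;d}=(C_pC_q/C_r)^d$, collapses to exactly $1$. The computation that the sharp constant cancels against the Gaussian $L^q$-norm is a finite Gaussian integral and should reduce to the identity $C_k^2=k^{1/k}/k'^{1/k'}$.

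I expect the main obstacle to be precisely this constant-cancellation step: verifying that the sharp Young constant times the $L^q$-norm of the Gaussian kernel equals $1$, and that the exponents $p$, $q$, $r$ together with the parameters $u$, $v$ determined above are mutually consistent with both the Young constraint and the Gaussian completion-of-squares. This is where the sharpness of Beckner--Brascamp--Lieb is essential; an ordinary Young inequality would only give a constant $\geq 1$ and would not yield the clean hypercontractive bound. A secondary, more routine obstacle is handling the limiting case $r=p$ (where the second factor degenerates) and confirming that the changes of variable $x_1:=(1/\sqrt{uv})x$ and its analogues convert the three normalized-Lebesgue integrals back into genuine Gaussian $L^p$, $L^r$, and $L^\infty$ norms, thereby producing \eqref{N}. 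Finally, specializing $\psi\equiv 1$ makes the $\|\psi\|_\infty$ factor equal to $1$ and the second Wick factor trivial, recovering the stated Nelson hypercontractivity inequality as an immediate corollary.
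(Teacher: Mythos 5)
Your proposal follows essentially the same route as the paper's proof: identify $u=(r-1)/(p-1)$, $v=(r-1)/(r-p)$ so that Lemma \ref{convolution} applies, bound the $\psi$ factor pointwise by $\|\psi\|_{\infty}$, complete the square to recognize the remaining integral as a convolution of $f$ with a fixed Gaussian kernel $g$, apply the sharp Young inequality with $q$ determined by $(1/p)+(1/q)=(1/r)+1$, and verify that the Beckner--Brascamp--Lieb constant times $\||g|\|_q$ and the rescaling factors cancels to exactly $1$. You also correctly flag the two genuine pressure points -- the constant cancellation (where sharpness is indispensable) and the changes of variable back to Gaussian norms -- which are precisely where the paper expends its effort.
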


\begin{proof}
Let $u := (r - 1)/(p - 1)$ and $v := (r - 1)/(r - p)$. Then we have
$(1/u) + (1/v) = 1$. Let $p'$ and $r'$ be the conjugates of $p$ and
$r$, respectively . Let's go back to the identity (\ref{convWick})
and multiply both sides of that relation by $\exp(-\langle x, x
\rangle/(2uvr'))$. We get:
\begin{eqnarray*}
& \ & \left[\Gamma\left(\frac{1}{\sqrt{u}}\right)\varphi \diamond
\Gamma\left(\frac{1}{\sqrt{v}}\right)\psi\right]
\left(\frac{x}{\sqrt{uv}}\right)e^{-\frac{\langle
x, x \rangle}{2uv}\left(1 - \frac{1}{r'}\right)}\\
& = & e^{\frac{\langle x, x \rangle}{2r'uv}}\left\{ \left[\varphi
\left(\frac{x}{\sqrt{v}}\right)e^{-\frac{\langle x, x
\rangle}{2v}}\right] \star
\left[\psi\left(\frac{x}{\sqrt{u}}\right)e^{-\frac{\langle x, x
\rangle}{2u}}\right] \right\}\\
& = & \int_{{\mathbb R}^d}\left[\varphi\left(\frac{x -
y}{\sqrt{v}}\right)e^{-\frac{\langle x - y, x - y
\rangle}{2pv}}\right]e^{-\frac{\langle x - y, x - y \rangle}{2p'v}}
\left[\psi\left(\frac{y}{\sqrt{u}}\right)e^{-\frac{\langle y, y
\rangle}{2u}}\right] \cdot e^{\frac{\langle x, x
\rangle}{2r'uv}}d_Ny.
\end{eqnarray*}
Let $f(x) := \varphi(x/\sqrt{v})\exp(-\langle x, x\rangle/(2pv))$.
We have:
\begin{eqnarray}
& \ & \left|\left[\Gamma\left(\frac{1}{\sqrt{u}}\right)\varphi
\diamond \Gamma\left(\frac{1}{\sqrt{v}}\right)\psi\right]
\left(\frac{x}{\sqrt{uv}}\right)e^{-\frac{\langle x, x
\rangle}{2uv}\left(1 - \frac{1}{r'}\right)}\right| \nonumber\\
& \leq & \int_{{\mathbb R}^d}|f(x - y)|e^{-\frac{\langle x - y, x -
y \rangle}{2p'v}}
\left|\psi\left(\frac{y}{\sqrt{u}}\right)\right|e^{-\frac{\langle y,
y \rangle}{2u}} \cdot e^{\frac{\langle x, x \rangle}{2r'uv}}d_Ny \nonumber\\
& \leq & \int_{{\mathbb R}^d}|f(x - y)|e^{-\frac{\langle x - y, x -
y \rangle}{2p'v}} \|\psi\|_{\infty}e^{-\frac{\langle y,
y \rangle}{2u}} \cdot e^{\frac{\langle x, x \rangle}{2r'uv}}d_Ny \nonumber\\
& = & \|\psi\|_{\infty}\int_{{\mathbb R}^d}|f(x - y)| \cdot
e^{-\frac{\langle x - y, x - y \rangle}{2p'v}} \cdot
e^{-\frac{\langle y, y \rangle}{2u}} \cdot
e^{\frac{\langle x, x \rangle}{2r'uv}}d_Ny \nonumber\\
& = & \|\psi\|_{\infty}\int_{{\mathbb R}^d}|f(x - y)| \cdot
e^{-\left[\frac{\langle x - y, x - y \rangle}{2p'v} + \frac{\langle
y, y \rangle}{2u} - \frac{\langle x, x \rangle}{2r'uv}\right]}d_Ny.
\label{N1}
\end{eqnarray}
Let us observe that the expression:
\begin{eqnarray*}
E(x, y) & = & \frac{\langle x - y, x - y \rangle}{2p'v} +
\frac{\langle y, y \rangle}{2u} - \frac{\langle x, x \rangle}{2r'uv}
\end{eqnarray*}
is a perfect square. Indeed, the coefficient of $\langle x$, $x
\rangle$ in $E(x$, $y)$ is:
\begin{eqnarray*}
a & = & \frac{1}{2p'v} - \frac{1}{2r'uv}\\
& = & \frac{1}{2v}\left(\frac{1}{p'} - \frac{1}{r'} \cdot
\frac{1}{u}\right)\\
& = & \frac{1}{2v}\left(\frac{p - 1}{p} - \frac{r - 1}{r} \cdot
\frac{p - 1}{r - 1}\right)\\
& = & \frac{p - 1}{2v}\left(\frac{1}{p} - \frac{1}{r}\right)\\
& = & \frac{p - 1}{2v} \cdot \frac{r - p}{pr}\\
& = & \frac{p - 1}{p} \cdot \frac{r - 1}{r} \cdot \frac{r - p}{r -
1}
\cdot \frac{1}{2v}\\
& = & \frac{1}{p'} \cdot \frac{1}{r'} \cdot \frac{1}{v} \cdot
\frac{1}{2v}\\
& = & \frac{1}{2p'r'v^2}.
\end{eqnarray*}
The coefficient of $\langle y$, $y \rangle$ in $E(x$, $y)$ is:
\begin{eqnarray*}
c & = & \frac{1}{2u} + \frac{1}{2p'v}\\
& = & \frac{1}{2}\left(\frac{1}{u} + \frac{1}{p'} \cdot
\frac{1}{v}\right)\\
& = & \frac{1}{2}\left(\frac{p - 1}{r - 1} + \frac{p - 1}{p} \cdot
\frac{r - p}{r - 1}\right)\\
& = & \frac{p - 1}{2(r - 1)}\left(1 + \frac{r - p}{p}\right)\\
& = & \frac{p - 1}{2(r - 1)} \cdot \frac{r}{p}\\
& = & \frac{1}{2} \cdot \frac{r}{r - 1} \cdot \frac{p - 1}{p}\\
& = & \frac{1}{2} \cdot r' \cdot \frac{1}{p'}\\
& = & \frac{r'}{2p'}.
\end{eqnarray*}
The coefficient of $\langle x$, $y \rangle$ is $E(x$, $y)$ is:
\begin{eqnarray*}
b & = & -\frac{1}{p'v}.
\end{eqnarray*}
Thus we have:
\begin{eqnarray*}
E(x, y) & = & a\langle x, x \rangle + b\langle x, y \rangle +
c\langle y, y \rangle\\
& = & \frac{1}{2p'r'v^2}\langle x, x \rangle - \frac{1}{p'v}\langle
x, y \rangle + \frac{r'}{2p'}\langle y, y \rangle\\
& = & \frac{1}{2p'r'v^2}\langle x - r'vy, x - r'vy\rangle.
\end{eqnarray*}
It follows now from (\ref{N1}) that:
\begin{eqnarray*}
& \ & \left|\left[\Gamma\left(\frac{1}{\sqrt{u}}\right)\varphi
\diamond \Gamma\left(\frac{1}{\sqrt{v}}\right)\psi\right]
\left(\frac{x}{\sqrt{uv}}\right)e^{-\frac{\langle x, x
\rangle}{2uv}\cdot\frac{1}{r}}\right| \\
& \leq & \|\psi\|_{\infty}\int_{{\mathbb R}^d}|f(x - y)| \cdot
e^{-\frac{1}{2p'r'v^2}\langle x - r'vy, x - r'vy\rangle}d_Ny.
\end{eqnarray*}
Let us make the change of variable $t := x - y$ in the last
integral. We obtain:
\begin{eqnarray*}
& \ & \left|\left[\Gamma\left(\frac{1}{\sqrt{u}}\right)\varphi
\diamond \Gamma\left(\frac{1}{\sqrt{v}}\right)\psi\right]
\left(\frac{x}{\sqrt{uv}}\right)e^{-\frac{\langle x, x
\rangle}{2uvr}}\right| \\
& \leq & \|\psi\|_{\infty}\int_{{\mathbb R}^d}|f(t)| \cdot
e^{-\frac{1}{2p'r'v^2}\langle x - r'v(x - t), x - r'v(x -
t)\rangle}d_Nt\\
& = & \|\psi\|_{\infty}\int_{{\mathbb R}^d}|f(t)| \cdot
e^{-\frac{1}{2p'r'v^2}\langle (1 - r'v)x + r'vt, (1 - r'v)x + r'vt
\rangle}d_Nt\\
& = & \|\psi\|_{\infty}\int_{{\mathbb R}^d}|f(t)| \cdot
e^{-\frac{1}{2p'r'v^2} \cdot (-r'v)^2\left\langle \frac{r'v -
1}{r'v}x - t,
\frac{r'v - 1}{r'v}x - t\right\rangle}d_Nt\\
& = & \|\psi\|_{\infty}\int_{{\mathbb R}^d}|f(t)| \cdot
e^{-\frac{r'}{2p'} \cdot \left\langle \frac{1}{s'}x - t,
\frac{1}{s'}x - t\right\rangle}d_Nt,
\end{eqnarray*}
where $s := r'v$ and $s'$ is the conjugate of $s$. Let us observe
that the last integral is a convolution product. Indeed, if we
define $g(x) := \exp(-[r'/(2p')] \cdot \langle x$, $x\rangle)$,
then:
\begin{eqnarray*}
& \ & \left|\left[\Gamma\left(\frac{1}{\sqrt{u}}\right)\varphi
\diamond \Gamma\left(\frac{1}{\sqrt{v}}\right)\psi\right]
\left(\frac{x}{\sqrt{uv}}\right)e^{-\frac{\langle x, x
\rangle}{2uvr}}\right| \\
& \leq & \|\psi\|_{\infty}\int_{{\mathbb R}^d}|f(t)| \cdot
e^{-\frac{r'}{2p'} \cdot \left\langle \frac{1}{s'}x - t,
\frac{1}{s'}x - t\right\rangle}d_Nt\\
& = & \|\psi\|_{\infty} \cdot \left[f \star
g\right]\left(\frac{1}{s'}x\right),
\end{eqnarray*}
for all $x \in {\mathbb R}^d$. Replacing $x$ by $s'x$, in the last
inequality, we obtain:
\begin{eqnarray}
\left|\left[\Gamma\left(\frac{1}{\sqrt{u}}\right)\varphi \diamond
\Gamma\left(\frac{1}{\sqrt{v}}\right)\psi\right]
\left(\frac{s'x}{\sqrt{uv}}\right)e^{-\frac{s'^2\langle x, x
\rangle}{2uvr}}\right| & \leq & \|\psi\|_{\infty} \cdot \left[f
\star g\right](x),
\end{eqnarray}
for all $x \in {\mathbb R}^d$. Raising this inequality to the power
$r$ and integrating with respect to the normalized Lebesgue measure
$d_Nx$, we get:
\begin{eqnarray*}
& \ & \left\{\int_{{\mathbb
R}^d}\left|\left[\Gamma\left(\frac{1}{\sqrt{u}}\right)\varphi
\diamond \Gamma\left(\frac{1}{\sqrt{v}}\right)\psi\right]
\left(\frac{s'x}{\sqrt{uv}}\right)e^{-\frac{s'^2\langle
x, x \rangle}{2uvr}}\right|^rd_Nx\right\}^{1/r}\\
& \leq & \|\psi\|_{\infty} \cdot \left\{\int_{{\mathbb R}^d}|(f
\star g)(x)|^rd_Nx\right\}^{1/r}.
\end{eqnarray*}
Making, the change of variable $x' := (s'/\sqrt{uv})x$ in the
integral from the left, we obtain:
\begin{eqnarray}
\left(\frac{\sqrt{uv}}{s'}\right)^{d/r}\left\|\left|
\left[\Gamma\left(\frac{1}{\sqrt{u}}\right)\varphi \diamond
\Gamma\left(\frac{1}{\sqrt{v}}\right)\psi\right] \cdot
e^{-\frac{1}{2r}\langle \cdot, \cdot \rangle} \right|\right\|_r &
\leq & \|\psi\|_{\infty} \cdot \|| f \star g |\|_r.
\end{eqnarray}
The left hand--side is a Gaussian $L^r$--norm, and so, we get:
\begin{eqnarray}
\left(\frac{\sqrt{uv}}{s'}\right)^{d/r}\left\|
\Gamma\left(\frac{1}{\sqrt{u}}\right)\varphi \diamond
\Gamma\left(\frac{1}{\sqrt{v}}\right)\psi\right\|_r & \leq &
\|\psi\|_{\infty} \cdot \|| f \star g |\|_r. \label{preYoung}
\end{eqnarray}
Since $r \geq p$, $(1/r) + 1 - (1/p) \leq 1$. Thus there exists $q
\geq 1$, such that $(1/r) + 1 = (1/p) + (1/q)$. We apply now the
Young inequality with the sharp constant, in the right side of
(\ref{preYoung}), and obtain:
\begin{eqnarray}
\left\| \Gamma\left(\frac{1}{\sqrt{u}}\right)\varphi \diamond
\Gamma\left(\frac{1}{\sqrt{v}}\right)\psi\right\|_r & \leq &
\left(\frac{s'}{\sqrt{uv}}\right)^{d/r} \cdot \|\psi\|_{\infty}
\cdot \||f \star g|\|_r \nonumber\\
& \leq & \left(\frac{s'}{\sqrt{uv}}\right)^{d/r} \cdot
\|\psi\|_{\infty} \cdot (C_pC_q/C_r)^d \||f|\|_p \cdot \||g|\|_q,
\label{N2}
\end{eqnarray}
where $C_p^2 = p^{1/p}/p'^{1/p'}$. Since $f(x) =
\varphi(x/\sqrt{v})\exp(-\langle x, x \rangle/(2pv))$, it is easy to
see that:
\begin{eqnarray}
\||f|\|_p & = & (\sqrt{v})^{d/p}\|\varphi\|_p.
\end{eqnarray}
Because $g(x) := \exp(-[r'/(2p')] \cdot \langle x$, $x\rangle)$, it
is not hard to see that:
\begin{eqnarray*}
\||g|\|_q & = & \left(\sqrt{\frac{p'}{qr'}}\right)^{d/q}.
\end{eqnarray*}
Thus, inequality (\ref{N2}) becomes:
\begin{eqnarray*}
\left\| \Gamma\left(\frac{1}{\sqrt{u}}\right)\varphi \diamond
\Gamma\left(\frac{1}{\sqrt{v}}\right)\psi\right\|_r & \leq &
\left(\frac{s'}{\sqrt{uv}}\right)^{d/r}(C_pC_q/C_r)^d
(\sqrt{v})^{d/p}\left(\sqrt{\frac{p'}{qr'}}\right)^{d/q}
\|\varphi\|_p\|\psi\|_{\infty}.
\end{eqnarray*}
Therefore, to prove (\ref{N}), we just need to show that:
\begin{eqnarray}
\left(\frac{s'}{\sqrt{uv}}\right)^{d/r} \cdot (C_pC_q/C_r)^d \cdot
(\sqrt{v})^{d/p} \cdot \left(\sqrt{\frac{p'}{qr'}}\right)^{d/q} & =
& 1,
\end{eqnarray}
which (by raising both sides to the power $2/d$) is equivalent to:
\begin{eqnarray*}
\frac{C_p^2C_q^2s'^{2/r}p'^{1/q}v^{1/p -
1/r}}{C_r^2u^{1/r}q^{1/q}r'^{1/q}} & = & 1.
\end{eqnarray*}
Since $1/p - 1/r = 1 - 1/q$ and $1 - 1/q = 1/q'$, we have to prove
that:
\begin{eqnarray}
\frac{C_p^2C_q^2s'^{2/r}p'^{1/q}v^{1/q'}}
{C_r^2u^{1/r}q^{1/q}r'^{1/q}} & = & 1. \label{Nc=1}
\end{eqnarray}
Let:
\begin{eqnarray}
C & := & \frac{C_p^2C_q^2s'^{2/r}p'^{1/q}v^{1/q'}}
{C_r^2u^{1/r}q^{1/q}r'^{1/q}}. \label{sNc=1}
\end{eqnarray}
To prove (\ref{Nc=1}) we will write $u$, $v$, and $s'$ in terms of
$p$, $q$, $r$ and their conjugates. We have:
\begin{eqnarray}
u & = & \frac{r - 1}{p - 1} \nonumber\\
& = & \frac{r}{p} \cdot \frac{1 - \frac{1}{r}}{1 - \frac{1}{p}} \nonumber\\
& = & \frac{r}{p} \cdot \frac{\frac{1}{r'}}{\frac{1}{p'}} \nonumber\\
& = & \frac{rp'}{pr'}. \label{u}
\end{eqnarray}
We also have:
\begin{eqnarray*}
v & = & \frac{r - 1}{r - p}\\
& = & \frac{r\left(1 - \frac{1}{r}\right)}{pr\left(\frac{1}{p} -
\frac{1}{r}\right)}\\
& = & \frac{1}{p} \cdot \frac{1 - \frac{1}{r}}{\frac{1}{p} -
\frac{1}{r}}.
\end{eqnarray*}
Let's remember that $(1/p) - (1/r) = 1/q'$. Thus, we obtain:
\begin{eqnarray}
v & = & \frac{1}{p} \cdot \frac{1 - \frac{1}{r}}{\frac{1}{p} -
\frac{1}{r}} \nonumber\\
& = & \frac{1}{p} \cdot \frac{\frac{1}{r'}}{\frac{1}{q'}} \nonumber\\
& = & \frac{q'}{pr'}. \label{v}
\end{eqnarray}
Finally, we have:
\begin{eqnarray*}
s' & = & \frac{s}{s - 1}\\
& = & \frac{r'v}{r'v - 1}.
\end{eqnarray*}
Replacing $v$ by $q'/(pr')$ we get:
\begin{eqnarray*}
s' & = & \frac{r'v}{r'v - 1} \\
& = & \frac{r'\frac{q'}{pr'}}{r'\frac{q'}{pr'} - 1} \\
& = & \frac{\frac{q'}{p}}{\frac{q'}{p} - 1}.
\end{eqnarray*}
Dividing both the numerator and denominator of the last fraction by
$q'$ we get:
\begin{eqnarray}
s' & = & \frac{\frac{1}{p}}{\frac{1}{p} - \frac{1}{q'}} \nonumber\\
& = & \frac{\frac{1}{p}}{\frac{1}{p} - \left(1 - \frac{1}{q}\right)} \nonumber\\
& = & \frac{\frac{1}{p}}{\frac{1}{p} + \frac{1}{q} - 1} \nonumber\\
& = & \frac{\frac{1}{p}}{\frac{1}{r}} \nonumber\\
& = & \frac{r}{p}. \label{s'}
\end{eqnarray}
Let us substitute $C_p^2$, $C_q^2$, $C_r^2$, $u$, $v$, and $s'$, in
the formula (\ref{sNc=1}). We have:
\begin{eqnarray*}
C & = & \frac{C_p^2C_q^2s'^{2/r}p'^{1/q}v^{1/q'}}
{C_r^2u^{1/r}q^{1/q}r'^{1/q}}\\
& = & \frac{\frac{p^{1/p}}{p'^{1/p'}} \cdot
\frac{q^{1/q}}{q'^{1/q'}} \cdot \left(\frac{r}{p}\right)^{2/r} \cdot
p'^{1/q}\cdot
\left(\frac{q'}{pr'}\right)^{1/q'}}{\frac{r^{1/r}}{r'^{1/r'}} \cdot
\left(\frac{rp'}{pr'}\right)^{1/r} \cdot q^{1/q} \cdot r'^{1/q}}.
\end{eqnarray*}
Observe that the factors $q^{1/q}$, $q'^{1/q'}$, and $r^{2/r}$
cancel. Collecting the powers of $p$, $p'$, and $r'$, we obtain:
\begin{eqnarray*}
C & = & p^{\frac{1}{p} - \frac{1}{r} - \frac{1}{q'}} \cdot
p'^{\frac{1}{q} - \frac{1}{p'} - \frac{1}{r}} \cdot r'^{\frac{1}{r'}
+ \frac{1}{r} - \frac{1}{q'} - \frac{1}{q}}\\
& = & p^{\frac{1}{p} - \frac{1}{r} - \left(1 - \frac{1}{q}\right)}
\cdot p'^{\frac{1}{q} - \left(1 - \frac{1}{p}\right) - \frac{1}{r}}
\cdot r'^{\left(\frac{1}{r'} + \frac{1}{r}\right) -
\left(\frac{1}{q'} + \frac{1}{q}\right)}\\
& = & p^{\left(\frac{1}{p} + \frac{1}{q}\right) - \left(\frac{1}{r}
+ 1\right)} \cdot p'^{\left(\frac{1}{p} + \frac{1}{q}\right) -
\left(\frac{1}{r} + 1\right)} \cdot r'^{1 - 1}\\
& = & p^0 \cdot p'^0 \cdot r'^0\\
& = & 1.
\end{eqnarray*}
\end{proof}

Passing from the finite dimensional case to the infinite dimensional
case, can be done in the same way as before, using Fatou's lemma.\\

\par We now present a more general H\"older inequality. The proof of
this inequality uses the following theorem of Lieb (see \cite{l1990}
or \cite{ll2001} (page 100)).

\begin{theorem} \label{Lieb}
Fix $k > 1$, integers $n_1$, $\dots$, $n_k$ and numbers $p_1$,
$\dots$, $p_k \geq 1$. Let $M \geq 1$ and let $B_i$ (for $i = 1$,
$\dots$, $k$) be a linear mapping from ${\mathbb R}^M$ to ${\mathbb
R}^{n_i}$. Let $Z : {\mathbb R}^M \to {\mathbb R}^+$ be some fixed
Gaussian function,
\begin{eqnarray*}
Z(x) & = & \exp\left[-\langle x, Jx \rangle\right]
\end{eqnarray*}
with $J$ a real, positive--semidefinite $M \times M$ matrix
(possible zero).
\par For functions $f_i$ in $L^{p_i}({\mathbb R}^{n_i})$ consider the
integral $I_Z$ and the number $C_Z$:
\begin{eqnarray}
I_Z(f_1, \dots, f_k) & = & \int_{{\mathbb R}^M}Z(x)\prod_{i =
1}^kf_i(B_ix)dx \label{I_Z}
\end{eqnarray}
\begin{eqnarray}
C_Z & := & \sup\{I_Z(f_1, \dots, f_k) \mid \||f_i|\|_{p_i} = 1 \ for
\ i = 1, \dots, k\}. \label{C_Z}
\end{eqnarray}
Then $C_Z$ is determined by restricting the $f$'s to be Gaussian
functions, i.e.,
\begin{eqnarray*}
C_Z & = & \sup\{I_Z(f_1, \dots, f_k) \mid \||f_i|\|_{p_i} = 1 \ and
\ f_i(x) =
c_i\exp[-\langle x, J_ix \rangle]\\
& \ & with \ c_i > 0, \ and \ J_i \ a \ real, \ symmetric, \
positive-definite \ n_i \times n_i \ matrix\}.
\end{eqnarray*}
\end{theorem}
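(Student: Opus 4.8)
The plan is to prove the sharper assertion that the supremum $C_Z$ is attained and is determined by the Gaussian family, from which the displayed characterization follows. First I would dispose of the degenerate exponents: if some $p_i = 1$ the corresponding extremizer degenerates and I recover the statement by approximating $p_i \downarrow 1$ and passing to the limit in $C_Z$; likewise a $J_i$ that is only positive--semidefinite in the conclusion is obtained by approximating with positive--definite data and using that $C_Z$ depends continuously (indeed monotonically) on $J$. I may therefore assume $p_1, \dots, p_k > 1$. A preliminary reduction shows $C_Z < \infty$ whenever the problem is nondegenerate (if $C_Z = \infty$ there is nothing to prove, since Gaussians already make $I_Z$ diverge): since $J \geq 0$ we have $0 < Z \leq 1$, and a Brascamp--Lieb type bound controlled by the ranks of the $B_i$ gives finiteness. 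The essential feature to exploit is that the ratio
\[
\mathcal{R}(f_1, \dots, f_k) \;=\; \frac{I_Z(f_1, \dots, f_k)}{\prod_{i=1}^k \||f_i|\|_{p_i}}
\]
is invariant under a natural noncompact group of dilations and translations of the $f_i$ compatible with the $B_i$, and it is exactly this symmetry, together with a rotation trick, that forces extremizers to be Gaussian.

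The conceptual engine is multiplicativity under tensor powers. Consider $N$ independent copies of the problem on $(\mathbb{R}^M)^N$, with weight $Z^{\otimes N}$, block maps $B_i^{\oplus N}$, and targets $(\mathbb{R}^{n_i})^N$. A direct computation gives $C_{Z^{\otimes N}} = C_Z^{\,N}$, and tensor powers $f_i^{\otimes N}$ of near-extremizers are near-extremizers of the tensorized problem. The tensorized problem carries a symmetry absent from the original: any orthogonal rotation mixing the $N$ copies — in particular, for $N = 2$, the planar rotation $(x,y) \mapsto (x\cos\theta - y\sin\theta,\ x\sin\theta + y\cos\theta)$ applied simultaneously on every pair of copies in $\mathbb{R}^M$ and in each $\mathbb{R}^{n_i}$ — commutes with $B_i^{\oplus N}$ and preserves $Z^{\otimes N}$, because $\langle x, Jx\rangle + \langle y, Jy\rangle$ is rotation invariant. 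Hence such rotations map extremizers to extremizers. This is the structural reason Gaussians must win: the Gaussians are precisely the product functions stable under these mixing rotations.

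To finish rigorously I would carry out the central limit argument, which sidesteps the delicate uniqueness question. Starting from a near-optimal configuration, I tensorize to the $N$-fold problem and apply a rotation that symmetrizes the $N$ copies; the relevant marginals of the rotated tensor product then converge, by the central limit theorem, to a Gaussian as $N \to \infty$, while by multiplicativity the ratio $\mathcal{R}$ is unchanged at every stage. In the limit one reads off that Gaussian test functions achieve $C_Z$, which is the assertion. An alternative, more algebraic route is to first prove existence of a genuine maximizer by a concentration--compactness argument — normalizing with the dilation and translation symmetries to prevent spreading and concentration, extracting a weak limit in the reflexive spaces $L^{p_i}$ (here $p_i > 1$ is used), and ruling out dichotomy via a strict binding inequality for $C_Z$ — and then, using uniqueness of the maximizer modulo the symmetry group, to conclude from the rotation trick that each $f_i$ satisfies the functional equation forcing $g(x)g(y)$ to be, up to the symmetries, invariant under all planar rotations of $(x,y)$. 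Such a $g$ is, up to translation, of the form $c\exp[-\langle x, J_i x\rangle]$ by the classical Herschel--Maxwell characterization of the Gaussian.

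The main obstacle, in either route, is the loss of compactness caused by the noncompact symmetry group: a maximizing sequence can spread, concentrate, or split, so the hard analytic work is the concentration--compactness normalization and the verification of the strict binding inequality that excludes mass splitting. The central limit route trades this for the (also nontrivial) task of controlling the $N \to \infty$ limit of the rotated marginals uniformly in the relevant norms; establishing that the ratio does not drop in the limit — equivalently, the requisite uniform integrability — is where I expect the real effort to lie. I would regard settling this compactness/limit step as the crux, the remaining steps being the by-now-standard tensorization identity $C_{Z^{\otimes N}} = C_Z^{\,N}$ and the rotation-invariance characterization of Gaussians.
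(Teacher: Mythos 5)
First, a point of comparison: the paper offers no proof of Theorem \ref{Lieb} at all. It is quoted verbatim from Lieb \cite{l1990} (see also \cite{ll2001}, p.~100) and used as a black box to deduce Corollary \ref{corlieb} and, through it, Theorem \ref{fullholdertheorem}. So there is no argument of the authors to measure yours against; what you have undertaken is a proof of Lieb's theorem itself. Your outline does follow the strategy that genuinely works in the literature --- invariance of the normalized ratio under tensor powers, the $O(2)$ rotation trick (your verification that the mixing rotation commutes with $B_i^{\oplus N}$ and preserves $Z^{\otimes N}$ is correct), and a central-limit-theorem passage to Gaussians, as in \cite{cll04} and in later treatments of Brascamp--Lieb constants --- so the approach is not wrong in kind.

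As a proof, however, it has concrete gaps. (i) The claim that ``a direct computation gives $C_{Z^{\otimes N}} = C_Z^{\,N}$'' is only half true: $C_{Z^{\otimes N}} \geq C_Z^{\,N}$ is immediate (tensorize test functions), but the reverse inequality --- which is exactly what your phrase ``tensor powers of near-extremizers are near-extremizers of the tensorized problem'' requires --- is essentially as deep as the theorem being proved; in \cite{l1990} multiplicativity of $C_Z$ over products is a principal theorem, not a computation. The CLT route can be organized to avoid this circularity, since it needs only the elementary identity $\mathcal{R}(f_1^{\otimes N}, \dots, f_k^{\otimes N}) = \mathcal{R}(f_1, \dots, f_k)^N$ together with multiplicativity of the \emph{Gaussian-restricted} constant, but that last fact is again a statement requiring proof, and your text does not separate these ingredients. (ii) The steps you yourself flag as the crux --- the upper semicontinuity or uniform integrability guaranteeing that the ratio does not drop in the CLT limit, or, on your alternative route, the concentration--compactness existence of maximizers and the strict binding inequality --- are precisely where the entire content of the theorem resides; everything you actually carry out is the routine part. (iii) The degenerate cases are dismissed without justification: continuity of $C_Z$ as $p_i \downarrow 1$ is unproven, and the assertion that $C_Z = \infty$ forces the Gaussian supremum to be infinite is itself a piece of the theorem (the finiteness statement of Brascamp--Lieb theory), not something that can be assumed. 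In short, you have drawn an accurate map of a known proof with all the difficult segments marked but not traversed; it cannot stand as a proof of Theorem \ref{Lieb}.
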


\begin{corollary} \label{corlieb}
Let $p$, $q$, $r \geq 1$. Let $B_1$ and $B_2$ be linear maps from
${\mathbb R}^2$ to ${\mathbb R}^2$, and $J$ a real,
positive--semidefinite $2 \times 2$ matrix (possible zero). For $f$
in $L^p({\mathbb R}^2)$ and $g$ in $L^q({\mathbb R}^2)$, we consider
the product:
\begin{eqnarray}
\left(f \star_{B_1, B_2, J} g\right)(x) & = & \int_{{\mathbb
R}}f(B_1(x, y))g(B_2(x, y))e^{-\langle (x, y), J(x, y)\rangle}d_Ny.
\end{eqnarray}
We define:
\begin{eqnarray}
C & := & \sup\{\|| f \star_{B_1, B_2, J} g |\|_r \mid \||f |\|_p =
\|| g |\|_q = 1\}.
\end{eqnarray}
Then $C$ is determined by restricting $f$ and $g$ to be Gaussian
functions.

\begin{proof}
Let $r'$ be the conjugate of $r$. For any $k \geq 1$, we denote by
${\mathcal G}_k$ the set of Gaussian functions of $L^k$--norm equal
to $1$. Using the duality between $L^r$ and $L^{r'}$, Lieb's
theorem, and H\"older inequality, we have:
\begin{eqnarray*}
C & = & \sup\{\|| f \star_{B_1, B_2, J} g |\|_r \mid \||f |\|_p =
\|| g |\|_q = 1\}\\
& = & \sup\left\{\left|\int_{{\mathbb R}}(f \star_{B_1, B_2, J}
g)(x)h(x)d_Nx\right|
\left| \right. \||f |\|_p = \|| g |\|_q = \||h|\|_{r'} = 1\right\}\\
& = & \sup\left\{\left|\int_{{\mathbb R}}\int_{{\mathbb R}}f(B_1(x,
y))g(B_2(x, y))h(x)e^{-\langle (x, y), J(x,
y)\rangle}d_Nxd_Ny\right|
\left|\right.\right.\\
& \ & \ \ \ \ \ \ \ \left.\||f |\|_p = \|| g |\|_q =
\||h|\|_{r'} = 1\right\}\\
& \leq & \sup\left\{\left|\int_{{\mathbb R}}\int_{{\mathbb
R}}f(B_1(x, y))g(B_2(x, y))h(x)e^{-\langle (x, y), J(x,
y)\rangle}d_Nxd_Ny\right| \left|\right. f \in {\mathcal G}_p, g \in
{\mathcal G}_q,
h \in {\mathcal G}_{r'}\right\}\\
& \leq & \sup\left\{\||f \star_{B_1, B_2, J} g|\|_r \cdot
\||h|\|_{r'} \mid f \in {\mathcal G}_p, g \in {\mathcal G}_q,
h \in {\mathcal G}_{r'}\right\}\\
& = & \sup\left\{\||f \star_{B_1, B_2, J} g|\|_r \cdot 1 \mid f \in
{\mathcal G}_p, g \in {\mathcal G}_q\right\}.
\end{eqnarray*}
\end{proof}

\end{corollary}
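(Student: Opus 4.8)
The plan is to combine three ingredients: the duality between $L^r$ and its conjugate space $L^{r'}$, Lieb's theorem (Theorem \ref{Lieb}), and the H\"older inequality. First I would replace the $L^r$ norm in the definition of $C$ by its dual characterization: for any function $F$ one has $\||F|\|_r = \sup\{|\int_{\mathbb R}F(x)h(x)\,d_Nx| : \||h|\|_{r'} = 1\}$, where $r'$ is the conjugate exponent of $r$. Applying this with $F = f \star_{B_1, B_2, J} g$ and unfolding the definition of the product $\star_{B_1, B_2, J}$ turns $C$ into a triple supremum, over unit-norm $f$, $g$ and $h$, of the absolute value of the double integral $\int_{\mathbb R}\int_{\mathbb R} f(B_1(x,y))\,g(B_2(x,y))\,h(x)\,e^{-\langle (x,y), J(x,y)\rangle}\,d_Nx\,d_Ny$.

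The key observation, and the real content of the argument, is that this double integral is precisely an instance of the integral $I_Z$ of Theorem \ref{Lieb}. One takes the ambient dimension $M = 2$ (the space of the pair $(x,y)$), the number of functions $k = 3$, target dimensions $n_1 = n_2 = 2$ and $n_3 = 1$, the three linear maps $B_1$, $B_2$ and the coordinate projection $B_3 : (x,y) \mapsto x$, and the Gaussian weight $Z(x,y) = e^{-\langle (x,y), J(x,y)\rangle}$; the hypothesis that $J$ is positive--semidefinite is exactly what makes $Z$ an admissible Gaussian function in Lieb's theorem. The normalization of the Lebesgue measure is immaterial here, since the conclusion we need is the qualitative statement that the extremizers may be taken Gaussian. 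Lieb's theorem then yields that the supremum of this integral over all unit-norm $f$, $g$, $h$ does not exceed its supremum when $f$, $g$, $h$ are restricted to the Gaussian classes $\mathcal{G}_p$, $\mathcal{G}_q$ and $\mathcal{G}_{r'}$.

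Finally I would undo the duality: applying the H\"older inequality to the pairing gives $|\int_{\mathbb R}(f \star_{B_1, B_2, J} g)(x)h(x)\,d_Nx| \le \||f \star_{B_1, B_2, J} g|\|_r\,\||h|\|_{r'}$, and since $\||h|\|_{r'} = 1$ the variable $h$ drops out, leaving $C \le \sup\{\||f \star_{B_1, B_2, J} g|\|_r : f \in \mathcal{G}_p,\ g \in \mathcal{G}_q\}$. The reverse inequality is immediate, because the Gaussian functions form a subset of the unit-norm functions of $L^p$ and $L^q$, so the two suprema coincide and $C$ is indeed determined by Gaussian $f$ and $g$. I expect the only genuine difficulty to be the middle step: correctly matching the double integral to the template of Theorem \ref{Lieb}, in particular recognizing that the dual variable $h$ enters through the one-dimensional projection map $B_3$. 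The surrounding duality and H\"older estimates are just the routine device for passing between the $L^r$ norm of the bilinear product and the trilinear integral that Lieb's theorem controls, and notably the argument needs no sharpness in the H\"older step.
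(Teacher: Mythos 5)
Your proposal is correct and follows essentially the same route as the paper's own proof: dualize the $L^r$ norm against $L^{r'}$, recognize the resulting trilinear integral as an instance of $I_Z$ in Lieb's theorem (with the dual variable $h$ entering through the projection $(x,y)\mapsto x$), and then use H\"older to fold the pairing back into the norm $\||f\star_{B_1,B_2,J}g|\|_r$ so that $h$ drops out. Your version is in fact slightly more careful than the paper's, since you make explicit both the matching of parameters $(M,k,n_i,B_3,Z)$ in Lieb's theorem and the trivial reverse inequality that the paper leaves implicit.
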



\begin{theorem} \label{fullholdertheorem} {\bf (Full H\"older
inequality for Gaussian Hilbert spaces.)} Let $H$ be a separable
Gaussian Hilbert space. Let $u$, $v$, $p$, $q$, and $r$ be numbers
greater than $1$,  such that:
\begin{eqnarray*}
\frac{1}{u} + \frac{1}{v} & = & 1
\end{eqnarray*}
and
\begin{eqnarray}
\frac{1}{u(p - 1)} + \frac{1}{v(q - 1)} & = & \frac{1}{r - 1}.
\label{cond1}
\end{eqnarray}
Then for all $\varphi$ in $L^p(\Omega$, ${\mathcal F}(H)$, $P)$ and
$\psi$ in $L^q(\Omega$, ${\mathcal F}(H)$, $P)$,
$\Gamma(1/\sqrt{u})\varphi \diamond \Gamma(1/\sqrt{v})\psi$ belongs
to $L^r(\Omega$, ${\mathcal F}(H)$, $P)$ and the following
inequality holds:
\begin{eqnarray}
\left\|\Gamma(1/\sqrt{u})\varphi \diamond
\Gamma(1/\sqrt{v})\psi\right\|_r & \leq & \|\varphi\|_p \cdot
\|\psi\|_q. \label{fullholder}
\end{eqnarray}
\end{theorem}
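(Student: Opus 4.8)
The plan is to follow the same two-stage strategy as the two preceding proofs: first establish the inequality in finite dimensions on $\mathbb{R}^d$ with the standard Gaussian measure $\mu$, and then lift it to an arbitrary separable Gaussian Hilbert space by the Fatou approximation argument already used for the general Hölder theorem. For the finite-dimensional step I would start once more from the fundamental identity (\ref{convWick}) and multiply both sides by the Gaussian factor $e^{\langle x,x\rangle/(2r'uv)}$, where $r'$ is the conjugate of $r$, chosen so that the right-hand side becomes $[\Gamma(1/\sqrt u)\varphi\diamond\Gamma(1/\sqrt v)\psi](x/\sqrt{uv})\,e^{-\langle x,x\rangle/(2ruv)}$, the integrand whose $L^r(d_Nx)$ norm is, after the rescaling $x\mapsto x/\sqrt{uv}$, the Gaussian norm $\|\Gamma(1/\sqrt u)\varphi\diamond\Gamma(1/\sqrt v)\psi\|_r$. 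Splitting the weights on the left through $\frac{1}{2v}=\frac{1}{2pv}+\frac{1}{2p'v}$ and $\frac{1}{2u}=\frac{1}{2qu}+\frac{1}{2q'u}$ and setting $f(x):=\varphi(x/\sqrt v)e^{-\langle x,x\rangle/(2pv)}$ and $g(x):=\psi(x/\sqrt u)e^{-\langle x,x\rangle/(2qu)}$, the left-hand side takes the shape of a generalized convolution
\[
(f\star_{B_1,B_2,J}g)(x)=\int_{\mathbb{R}^d}f(x-y)\,g(y)\,e^{-\langle(x,y),J(x,y)\rangle}\,d_Ny,
\]
with $B_1(x,y)=x-y$, $B_2(x,y)=y$, and $\langle(x,y),J(x,y)\rangle=\frac{\langle x-y,x-y\rangle}{2p'v}+\frac{\langle y,y\rangle}{2q'u}-\frac{\langle x,x\rangle}{2r'uv}$, computed exactly as the form $E(x,y)$ in the proof of Theorem \ref{Nelson}. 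This is precisely the object treated in Corollary \ref{corlieb}.

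The crucial step is to invoke Corollary \ref{corlieb} (a consequence of Lieb's Theorem \ref{Lieb}): the sharp constant $C=\sup\{\||f\star_{B_1,B_2,J}g|\|_r:\||f|\|_p=\||g|\|_q=1\}$ is attained on Gaussian functions. Hence, after tracking the powers of $u$ and $v$ that the changes of variables $x\mapsto x/\sqrt{uv}$, $x\mapsto x/\sqrt v$, $x\mapsto x/\sqrt u$ produce when the Lebesgue norms of $f$, $g$, and $f\star g$ are turned back into $\|\varphi\|_p$, $\|\psi\|_q$, and $\|\Gamma(1/\sqrt u)\varphi\diamond\Gamma(1/\sqrt v)\psi\|_r$, it suffices to bound the ratio over Gaussian $f$ and $g$. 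This is a concrete Gaussian integral; the optimum over the Gaussian parameters is attained at the Gaussians corresponding to exponential functions, and for $\varphi=\mathcal{E}_\xi$, $\psi=\mathcal{E}_\eta$ everything is explicit: the closure properties of the exponential family recalled in Section 2 give $\Gamma(1/\sqrt u)\mathcal{E}_\xi\diamond\Gamma(1/\sqrt v)\mathcal{E}_\eta=\mathcal{E}_{\xi/\sqrt u+\eta/\sqrt v}$, and with $\|\mathcal{E}_\zeta\|_s=e^{\frac{s-1}{2}\langle\zeta,\zeta\rangle}$ the inequality (\ref{fullholder}) reduces to the nonnegativity of the quadratic form
\[
\frac{p-1}{2}\langle\xi,\xi\rangle+\frac{q-1}{2}\langle\eta,\eta\rangle-\frac{r-1}{2}\left\langle\frac{\xi}{\sqrt u}+\frac{\eta}{\sqrt v},\frac{\xi}{\sqrt u}+\frac{\eta}{\sqrt v}\right\rangle\ge 0.
\]

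The main obstacle is precisely this Gaussian optimization: carrying out the integral over all Gaussian $f$, $g$, verifying that the extremal covariances are the ones coming from exponential functions, and matching the normalizing constants so that the overall inequality carries constant exactly $1$. The clean reformulation above makes the role of the hypothesis visible: a short computation shows the $2\times 2$ coefficient matrix of the displayed form (in each coordinate) has vanishing determinant exactly when $\frac{1}{u(p-1)}+\frac{1}{v(q-1)}=\frac{1}{r-1}$, i.e. under (\ref{cond1}); since each summand on the left of (\ref{cond1}) is then at most $\frac{1}{r-1}$, the diagonal coefficients are nonnegative, so the form is positive semidefinite with a one-dimensional kernel. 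This is the generalized-Young analogue of the cancellation of the powers of $p$, $p'$, $r'$ at the end of the proof of Theorem \ref{Nelson}, and the kernel direction yields the equality case, i.e. the sharpness through exponential functions announced in the introduction. Finally, the passage from $\mathbb{R}^d$ to a general separable Gaussian Hilbert space $H$ is routine: approximating $\varphi\in L^p$ and $\psi\in L^q$ by cylinder functions measurable with respect to finitely many coordinates, using the $L^1$-continuity of the Wick product from \cite{ls2010} to extract an almost surely convergent subsequence of the products $\Gamma(1/\sqrt u)\varphi_n\diamond\Gamma(1/\sqrt v)\psi_n$, and applying Fatou's lemma with the finite-dimensional bound, yields (\ref{fullholder}) in full generality.
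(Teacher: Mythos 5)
Your finite-dimensional setup coincides with the paper's: multiply (\ref{convWick}) by $e^{\langle x,x\rangle/(2r'uv)}$, split the Gaussian weights using $\frac{1}{2v}=\frac{1}{2pv}+\frac{1}{2p'v}$ and $\frac{1}{2u}=\frac{1}{2qu}+\frac{1}{2q'u}$, recognize that the exponent $E(x,y)$ is a perfect square, and view the left-hand side as a generalized convolution to which Lieb's theorem applies; the Fatou lift to infinite dimensions is also fine (the paper uses conditional expectations and the Martingale Convergence Theorem, but cylinder-function approximation works equally well). The genuine gap is the step you yourself call ``the main obstacle'' and then never execute: the Gaussian optimization. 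Corollary \ref{corlieb} reduces the sharp constant to a supremum over \emph{centered} Gaussian trial functions $f(x)=c_1e^{-sx^2/2}$, $g(x)=c_2e^{-tx^2/2}$ with \emph{arbitrary} variance parameters $s,t>0$. Your quadratic-form computation instead verifies inequality (\ref{fullholder}) on the exponential functions $\mathcal{E}_\xi$, $\mathcal{E}_\eta$; under the substitution $\varphi\mapsto f(x)=\varphi(x/\sqrt v)e^{-\langle x,x\rangle/(2pv)}$ these become Gaussians with \emph{fixed} quadratic part ($s=1/(pv)$, $t=1/(qu)$) and a varying linear term. These are two different families of trial functions, and since the kernel $e^{-E(x,y)}$ is not translation invariant, a bound on one family implies nothing about the supremum over the other. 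Your positive-semidefiniteness computation under (\ref{cond1}) is correct as far as it goes, but it establishes only that the constant $1$ cannot be improved (the sharpness assertion announced in the introduction), not that the supremum over all admissible $f,g$ --- equivalently, by Lieb, over all centered Gaussians --- is at most $1$, which is what the theorem claims.

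Concretely, what is missing is the core of the paper's proof: (i) the Minkowski-inequality tensorization (Claim 1 in the paper's argument), needed because Corollary \ref{corlieb} is stated in one dimension, whereas you apply it directly on ${\mathbb R}^d$; and (ii) the explicit evaluation of the Gaussian integrals for arbitrary $s,t>0$, which reduces the theorem to the supremum identity (\ref{sup=1}), proved in the paper by Jensen's inequality for the concave function $\ln$, using the two identities (\ref{J1}) and (\ref{J2}) in which hypothesis (\ref{cond1}) enters. The fact that this supremum is attained precisely at $s=1/(pv)$, $t=1/(qu)$ --- that is, at the Gaussians corresponding to constant (or, after translation, exponential) $\varphi$ and $\psi$ --- is the \emph{conclusion} of that computation, not something that can be assumed at the outset; assuming it is exactly where your argument becomes circular.
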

\begin{proof} Let $p'$, $q'$, and $r'$ be the conjugates of $p$,
$q$, and $r$, respectively. Since:
\begin{eqnarray*}
\frac{1}{p - 1} & = & \frac{p}{p - 1} - 1\\
& = & p' - 1
\end{eqnarray*}
and similarly $1/(q - 1) = q' - 1$, and $1/(r - 1) = r' - 1$,
condition (\ref{cond1}) is equivalent to:
\begin{eqnarray*}
r' - 1 & = & \frac{1}{r - 1}\\
& = & \frac{1}{u(p - 1)} + \frac{1}{v(q - 1)}\\
& = & \frac{1}{u}\left(p' - 1\right) + \frac{1}{v}\left(q' -
1\right)\\
& = & \frac{1}{u} \cdot p' + \frac{1}{v} \cdot q' -
\left(\frac{1}{u} + \frac{1}{v}\right)\\
& = & \frac{1}{u} \cdot p' + \frac{1}{v} \cdot q' - 1.
\end{eqnarray*}
That means, we have:
\begin{eqnarray}
\frac{p'}{u} + \frac{q'}{v} & = & r'. \label{cond2}
\end{eqnarray}

Following the same steps as before, it is enough to check the
inequality in the finite dimensional case. Multiplying both sides of
the convolution identity (\ref{convWick}) by $\exp(1/(2uvr')\langle
x, x \rangle)$, we obtain:
\begin{eqnarray*}
& \ & \left[\Gamma\left(\frac{1}{\sqrt{u}}\right)\varphi \diamond
\Gamma\left(\frac{1}{\sqrt{v}}\right)\psi\right]
\left(\frac{x}{\sqrt{uv}}\right)e^{-\frac{\langle
x, x \rangle}{2uv}\left(1 - \frac{1}{r'}\right)}\\
& = & e^{\frac{\langle x, x \rangle}{2r'uv}} \left\{\left[\varphi
\left(\frac{x}{\sqrt{v}}\right)e^{-\frac{\langle x, x
\rangle}{2v}}\right] \star
\left[\psi\left(\frac{x}{\sqrt{u}}\right)e^{-\frac{\langle x, x
\rangle}{2u}}\right]\right\}\\
& = & \int_{{\mathbb R}^d}\left[\varphi\left(\frac{x -
y}{\sqrt{v}}\right)e^{-\frac{\langle x - y, x - y
\rangle}{2pv}}\right]e^{-\frac{\langle x - y, x - y \rangle}{2p'v}}
\left[\psi\left(\frac{y}{\sqrt{u}}\right)e^{-\frac{\langle y, y
\rangle}{2qu}}\right]e^{-\frac{\langle y, y \rangle}{2q'u}} \cdot
e^{\frac{\langle x, x \rangle}{2r'uv}}d_Ny.
\end{eqnarray*}
Let $f(x) := \varphi(x/\sqrt{v})\exp(-\langle x, x\rangle/(2pv))$
and $g(x) := \psi(x/\sqrt{u})\exp(-\langle x, x\rangle/(2qu))$. We
have:
\begin{eqnarray}
& \ & \left|\left[\Gamma\left(\frac{1}{\sqrt{u}}\right)\varphi
\diamond \Gamma\left(\frac{1}{\sqrt{v}}\right)\psi\right]
\left(\frac{x}{\sqrt{uv}}\right)e^{-\frac{\langle x, x
\rangle}{2uv} \cdot \frac{1}{r}}\right| \nonumber\\
& \leq & \int_{{\mathbb R}^d}|f(x - y)|e^{-\frac{\langle x - y, x -
y \rangle}{2p'v}}|g(y)|e^{-\frac{\langle y,
y \rangle}{2q'u}} \cdot e^{\frac{\langle x, x \rangle}{2r'uv}}d_Ny \nonumber\\
& = & \int_{{\mathbb R}^d}|f(x - y)| \cdot |g(y)| \cdot
e^{-\frac{\langle x - y, x - y \rangle}{2p'v} - \frac{\langle y, y
\rangle}{2q'u} + \frac{\langle x, x \rangle}{2r'uv}}d_Ny. \label{L1}
\end{eqnarray}
As before, we are now showing that the expression:
\begin{eqnarray*}
E(x, y) & = & \frac{\langle x - y, x - y \rangle}{2p'v} +
\frac{\langle y, y \rangle}{2q'u} - \frac{\langle x, x
\rangle}{2r'uv}
\end{eqnarray*}
is a perfect square. Indeed, the coefficient of $\langle x$, $x
\rangle$ in $E(x$, $y)$ is:
\begin{eqnarray*}
a & = & \frac{1}{2p'v} - \frac{1}{2r'uv}\\
& = & \frac{1}{2p'r'v}\left(r' - \frac{p'}{u}\right)\\
{\rm from} \
(\ref{cond2}) & = & \frac{1}{2p'r'v} \cdot \frac{q'}{v}\\
& = & \frac{1}{2p'q'r'} \cdot \left(\frac{q'}{v}\right)^2.
\end{eqnarray*}
The coefficient of $\langle y$, $y \rangle$ in $E(x$, $y)$ is:
\begin{eqnarray*}
c & = & \frac{1}{2q'u} + \frac{1}{2p'v}\\
& = & \frac{1}{2p'q'}\left(\frac{p'}{u} + \frac{q'}{v}\right)\\
{\rm from} \
(\ref{cond2}) & = & \frac{1}{2p'q'} \cdot r'\\
& = & \frac{1}{2p'q'r'} \cdot r'^2.
\end{eqnarray*}
The coefficient of $\langle x$, $y \rangle$ is $E(x$, $y)$ is:
\begin{eqnarray*}
b & = & -\frac{1}{p'v}\\
& = & -\frac{1}{p'q'r'} \cdot \left(\frac{q'}{v} \cdot r'\right).
\end{eqnarray*}
Thus we have:
\begin{eqnarray*}
E(x, y) & = & a\langle x, x \rangle + b\langle x, y \rangle +
c\langle y, y \rangle\\
& = & \frac{1}{2p'q'r'}\left[\left(\frac{q'}{v}\right)^2\langle x, x
\rangle - 2\left(\frac{q'}{v} \cdot r'\right)\langle
x, y \rangle + r'^2\langle y, y \rangle\right]\\
& = & \frac{1}{2p'q'r'}\left\langle \frac{q'}{v}x - r'y,
\frac{q'}{v}x - r'y\right\rangle.
\end{eqnarray*}
It follows now from (\ref{L1}) that:
\begin{eqnarray}
& \ & \left|\left[\Gamma\left(\frac{1}{\sqrt{u}}\right)\varphi
\diamond \Gamma\left(\frac{1}{\sqrt{v}}\right)\psi\right]
\left(\frac{x}{\sqrt{uv}}\right)e^{-\frac{\langle x, x
\rangle}{2uvr}}\right| \nonumber\\
& \leq & \int_{{\mathbb R}^d}|f(x - y)| \cdot |g(y)| \cdot
e^{-\frac{1}{2p'q'r'}\left\langle \frac{q'}{v}x - r'y, \frac{q'}{v}x
- r'y\right\rangle}d_Ny. \label{L2}
\end{eqnarray}
{\bf Claim 1:} \ For all $d \geq 1$, $f \in L^p({\mathbb R}^d$,
$d_Nx)$, and $g \in L^q({\mathbb R}^d$, $d_Ny)$, we have:
\begin{eqnarray}
\left\{\int_{{\mathbb R}^d}\left[\int_{{\mathbb R}^d}|f(x - y)|
\cdot |g(y)| \cdot J_d(x, y)d_Ny\right]^rd_Nx\right\}^{1/r} & \leq &
C^d\||f|\|_p \cdot \||g|\|_q, \label{neededone}
\end{eqnarray}
where:
\begin{eqnarray*}
J_d(x, y) & = & e^{-\frac{1}{2p'q'r'}\left\langle \frac{q'}{v}x -
r'y, \frac{q'}{v}x - r'y\right\rangle}
\end{eqnarray*}
and
\begin{eqnarray*}
C^2 & = & v^{\frac{1}{r} - \frac{1}{p}}u^{\frac{1}{r} -
\frac{1}{q}}.
\end{eqnarray*}
To prove this claim, we reduce the problem to the one--dimensional
case, via Minkowski's inequality, copying the argument from
\cite{ll2001} (page 201). Namely, let us assume that
(\ref{neededone}) holds for two dimensions $d_1 = m$ and $d_2 = n$.
We can prove that (\ref{neededone}) holds for $d = m + n$, using
Minkowski's inequality in the form in which the discrete summation
is replaced by the continuous integration, in the following way. Let
$x = (x_m$, $x_n)$ be a generic vector in ${\mathbb R}^{m + n}$,
where $x_m$ and $x_n$ are generic vectors in ${\mathbb R}^m$ and
${\mathbb R}^n$, respectively. Let us observe that, for all $x =
(x_m$, $x_n)$ and $y = (y_m$, $y_n)$ in ${\mathbb R}^{m + n}$, we
have:
\begin{eqnarray*}
J_{m + n}(x, y) & = & J_m(x_m, y_m) \cdot J_n(x_n, y_n).
\end{eqnarray*}
We have:
\begin{eqnarray}
& \ & \left\{\int_{{\mathbb R}^{m + n}}\left[\int_{{\mathbb R}^{m +
n}}|f(x - y)| \cdot |g(y)| \cdot J_{m + n}(x,
y)d_Ny\right]^rd_Nx\right\}^{1/r} \nonumber\\
& = & \left\{\int_{{\mathbb R}^{m}}\int_{{\mathbb
R}^{n}}\left[\int_{{\mathbb R}^{m}}\int_{{\mathbb R}^{n}}|f(x_m -
y_m, x_n - y_n)| \cdot |g(y_m,
y_n)|\right.\right. \nonumber\\
& \ & \ \ \ \ \ \ \ \ \ \ \ \ \ \ \times \left.\left.J_{m +
n}\left((x_m, x_n), (y_m,
y_n)\right)d_Ny_nd_Ny_m\right]^rd_Nx_nd_Nx_m\right\}^{1/r} \nonumber\\
& \leq & \left\{\int_{{\mathbb R}^{m}}\left\{\int_{{\mathbb
R}^{m}}J_m(x_m, y_m)\left[\int_{{\mathbb R}^{n}}\left(\int_{{\mathbb
R}^{n}}|f(x_m - y_m, x_n - y_n)| \cdot |g(y_m,
y_n)|\right.\right.\right.\right. \nonumber\\
& \ & \ \ \ \ \ \ \ \ \ \ \ \ \ \ \ \ \ \ \ \ \ \ \ \ \ \ \ \ \ \ \
\ \ \ \ \ \ \times \left.\left.\left.\left.J_n(x_n,
y_n)d_Ny_n\right)^rd_Nx_n\right]^{1/r}d_Ny_m\right\}^rd_Nx_m\right\}^{1/r}
\label{Minkowski}\\
& \leq & \left\{\int_{{\mathbb R}^{m}}\left[\int_{{\mathbb
R}^{m}}J_m(x_m, y_m)C^n\||f(x_m - y_m, \cdot)|\|_p \cdot \||g(y_m,
\cdot)|\|_q
d_Ny_m\right]^rd_Nx_m\right\}^{1/r} \label{M1}\\
& = & C^n\left\{\int_{{\mathbb R}^{m}}\left[\int_{{\mathbb
R}^{m}}J_m(x_m, y_m)\||f(x_m - y_m, \cdot)|\|_p \cdot \||g(y_m,
\cdot)|\|_q
d_Ny_m\right]^rd_Nx_m\right\}^{1/r} \nonumber\\
& \leq & C^n \cdot C^m\||f|\|_p \cdot \||g|\|_q \label{M2}\\
& = & C^{m + n}\||f|\|_p \cdot \||g|\|_q. \nonumber
\end{eqnarray}
This shows that in order to prove (\ref{neededone}), it is enough to
prove it for the dimension $d = 1$ only. To achieve this, since the
function $(x$, $y) \mapsto [(q'/v)x - r'y]^2$ is non--negative,
according to Lieb theorem, it is enough to check it for exponential
functions of the form $f(x) = c_1\exp[-(s/2)x^2]$ and $g(x) =
c_2\exp[-(t/2)x^2]$, where $s > 0$, $t > 0$, and $c_1$ and $c_2$ are
positive constants chosen such that $\||f|\|_p = \||g|\|_q = 1$. Let
us first compute the values of $c_1$ and $c_2$. We have:
\begin{eqnarray*}
\||f|\|_p & = & c_1\left[\int_{{\mathbb R}}e^{-\frac{ps}{2}x^2}d_Nx\right]^{1/p}\\
({\rm let} \ x' := \sqrt{ps}x) \quad & = &
c_1\left[\frac{1}{\sqrt{ps}}\int_{{\mathbb
R}}e^{-\frac{x'^2}{2}}d_Nx'
\right]^{1/p}\\
& = & c_1\frac{1}{(\sqrt{ps})^{1/p}}.
\end{eqnarray*}
Thus, in order to have $\||f|\|_p = 1$, we must have:
\begin{eqnarray}
c_1 & = & (\sqrt{ps})^{\frac{1}{p}}. \label{c1}
\end{eqnarray}
Similarly, in order to have $\||g\||_q = 1$, we must have:
\begin{eqnarray}
c_2 & = & (\sqrt{qt})^{\frac{1}{q}}. \label{c2}
\end{eqnarray}
Hence, we have:
\begin{eqnarray*}
& \ & \left\|\left|\int_{{\mathbb R}}f(\cdot -
y)g(y)e^{-\left(\frac{q'}{v}\cdot -
r'y\right)^2}d_Ny\right|\right\|_r\\
& = &
\left(\sqrt{ps}\right)^{1/p}\left(\sqrt{qt}\right)^{1/q}\left\{\int_{{\mathbb
R}}\left[\int_{\mathbb R}e^{-\frac{s}{2}(x - y)^2}
e^{-\frac{t}{2}y^2}e^{-\frac{1}{2p'q'r'}\left(\frac{q'}{v}x -
r'y\right)^2}d_Ny\right]^rd_Nx\right\}^{1/r}.
\end{eqnarray*}
Let $\alpha := q'/(v\sqrt{p'q'r'})$, $\beta := r'/\sqrt{p'q'r'}$,
and $\gamma := p'/(u\sqrt{p'q'r'})$. Let us observe first that
$\alpha + \gamma = \beta$, since $(p'/u) + (q'/v) = r'$. We have:
\begin{eqnarray*}
& \ & \left\|\left|\int_{{\mathbb R}}f(\cdot -
y)g(y)e^{-\left(\frac{q'}{v}\cdot -
r'y\right)^2}d_Ny\right|\right\|_r\\
& = & (\sqrt{ps})^{1/p}(\sqrt{qt})^{1/q}\left\{\int_{{\mathbb
R}}\left[\int_{\mathbb R}e^{-\frac{s}{2}(x -
y)^2}e^{-\frac{t}{2}y^2}e^{-\frac{1}{2}(\alpha x
- \beta y)^2}d_Ny\right]^rd_Nx\right\}^{1/r}\\
& = & (\sqrt{ps})^{1/p}(\sqrt{qt})^{1/q}\left\{\int_{{\mathbb
R}}\left[e^{-\frac{\left(s + \alpha^2\right)}{2}x^2}\int_{\mathbb
R}e^{-\frac{\left(s + t + \beta^2\right)}{2}y^2 + (s +
\alpha\beta)xy}d_Ny\right]^rd_Nx\right\}^{1/r}.
\end{eqnarray*}
In the last integral we make the change of variable $y' = \sqrt{s +
t + \beta^2} \cdot y$. Completing the square, we obtain:
\begin{eqnarray*}
& \ & \left\|\left|\int_{{\mathbb R}}f(\cdot -
y)g(y)e^{-\left(\frac{q'}{v}\cdot -
r'y\right)^2}d_Ny\right|\right\|_r\\
& = & (\sqrt{ps})^{1/p}(\sqrt{qt})^{1/q}\left\{\int_{{\mathbb
R}}\left[e^{-\frac{\left(s + \alpha^2\right)}{2}x^2}\frac{1}{\sqrt{s
+ t + \beta^2}}\int_{\mathbb R}e^{-\frac{1}{2}y^2 + \frac{s +
\alpha\beta}{\sqrt{s + t +
\beta^2}}xy}d_Ny\right]^rd_Nx\right\}^{1/r}\\
& = & \sqrt{\frac{(ps)^{1/p}(qt)^{1/q}}{s + t +
\beta^2}}\left\{\int_{{\mathbb R}}\left[e^{-\frac{\left(s +
\alpha^2\right)}{2}x^2} \cdot e^{\frac{(s + \alpha\beta)^2}{2(s + t
+ \beta^2)}x^2}\int_{\mathbb R}e^{-\frac{1}{2}\left(y - \frac{s +
\alpha\beta}{\sqrt{s + t +
\beta^2}}x\right)^2}d_Ny\right]^rd_Nx\right\}^{1/r}.
\end{eqnarray*}
Therefore,
\begin{eqnarray*}
& \ & \left\|\left|\int_{{\mathbb R}}f(\cdot -
y)g(y)e^{-\left(\frac{q'}{v}\cdot -
r'y\right)^2}d_Ny\right|\right\|_r\\
& = & \sqrt{\frac{(ps)^{1/p}(qt)^{1/q}}{s + t +
\beta^2}}\left\{\int_{{\mathbb R}}\left[e^{-\frac{\left(s +
\alpha^2\right)}{2}x^2 + \frac{(s + \alpha\beta)^2}{2(s + t
+ \beta^2)}x^2}\right]^rd_Nx\right\}^{1/r}\\
& = &\sqrt{\frac{(ps)^{1/p}(qt)^{1/q}}{s + t +
\beta^2}}\left\{\int_{{\mathbb R}}\left[e^{-\frac{\left(s +
\alpha^2\right)\left(s + t + \beta^2\right) + (s +
\alpha\beta)^2}{2(s + t +
\beta^2)}x^2}\right]^rd_Nx\right\}^{1/r}\\
& = & \sqrt{\frac{(ps)^{1/p}(qt)^{1/q}}{s + t +
\beta^2}}\left\{\int_{{\mathbb R}}\left[e^{-\frac{\left[s(\beta -
\alpha)^2 + t\alpha^2 + st\right]}{2(s + t +
\beta^2)}x^2}\right]^rd_Nx\right\}^{1/r}\\
& = & \sqrt{\frac{(ps)^{1/p}(qt)^{1/q}}{s + t +
\beta^2}}\left\{\int_{{\mathbb R}}e^{-\frac{r\left(s\gamma^2 +
t\alpha^2 + st\right)}{2(s + t + \beta^2)}x^2}d_Nx\right\}^{1/r}\\
& = & \sqrt{\frac{(ps)^{1/p}(qt)^{1/q}}{s + t + \beta^2}}\cdot
\sqrt{\frac{(s + t + \beta^2)^{1/r}}{r^{1/r}(s\gamma^2 + t\alpha^2 +
st)^{1/r}}}\\
& = & \sqrt{\frac{p^{1/p}q^{1/q}}{r^{1/r}}} \cdot
\sqrt{\frac{s^{1/p}t^{1/q}}{(s + t + \beta^2)^{1/r'}(\gamma^2s +
\alpha^2t + st)^{1/r}}}.
\end{eqnarray*}
To finish our proof we need to show that:
\begin{eqnarray}
\sup_{s > 0, t > 0}\left\{\frac{p^{1/p}q^{1/q}}{r^{1/r}} \cdot
\frac{s^{1/p}t^{1/q}}{(s + t + \beta^2)^{1/r'}(\gamma^2s + \alpha^2t
+ st)^{1/r}}\right\} & = & v^{\frac{1}{r} -
\frac{1}{p}}u^{\frac{1}{r} - \frac{1}{q}}. \label{sup=1}
\end{eqnarray}
Before we compute this supremum, we would like to outline the
intuition behind what we are going to do next. Let us observe that
the numerator $s^{1/p}t^{1/q}$, being a product, is somehow like a
geometric mean, while the factors from the denominator $(s + t +
\beta^2)$ and $(\gamma^2s + \alpha^2t + st)$, being sums, are like
arithmetic means. We know from the inequality between the geometric
and arithmetic means of positive numbers, that the geometric mean is
always dominated by the arithmetic mean, and this classic inequality
is based on the concavity of the logarithmic function.\\
Let $S := (pv)s$ and $T := (qu)t$. We have:
\begin{eqnarray}
& \ & \frac{p^{1/p}q^{1/q}}{r^{1/r}} \cdot \frac{s^{1/p}t^{1/q}}{(s
+ t + \beta^2)^{1/r'}(\gamma^2s + \alpha^2t + st)^{1/r}} \nonumber\\
& = & \frac{p^{1/p}q^{1/q}}{r^{1/r}} \cdot
\frac{\frac{1}{(pv)^{1/p}}S^{1/p}\frac{1}{(qu)^{1/q}}
T^{1/q}}{\left(\frac{1}{pv} \cdot S + \frac{1}{qu} \cdot T + \beta^2
\cdot 1\right)^{1/r'}\left(\frac{\gamma^2}{pv}\cdot S +
\frac{\alpha^2}{qu}\cdot T + \frac{1}{pquv} \cdot ST\right)^{1/r}} \nonumber\\
& = & \frac{1}{v^{1/p}u^{1/q}r^{1/r}} \cdot
\frac{S^{1/p}T^{1/q}}{\left(\frac{1}{pv} \cdot S + \frac{1}{qu}
\cdot T + \beta^2 \cdot
1\right)^{1/r'}\left(\frac{\gamma^2}{pv}\cdot S +
\frac{\alpha^2}{qu}\cdot T + \frac{1}{pquv} \cdot ST\right)^{1/r}}.
\label{formula}
\end{eqnarray}
Let us observe that:
\begin{eqnarray}
\frac{1}{pv} + \frac{1}{qu} + \beta^2 & = & 1. \label{J1}
\end{eqnarray}
Indeed, we have:
\begin{eqnarray*}
\frac{1}{pv} + \frac{1}{qu} + \beta^2 & = & \frac{1}{pv} +
\frac{1}{qu} + \frac{r'}{p'q'}\\
& = & \frac{1}{pv} + \frac{1}{qu} + \frac{\frac{p'}{u} +
\frac{q'}{v}}{p'q'}\\
& = & \frac{1}{pv} + \frac{1}{qu} + \frac{1}{q'u} + \frac{1}{p'v}\\
& = & \frac{1}{v}\left(\frac{1}{p} + \frac{1}{p'}\right) +
\frac{1}{u}\left(\frac{1}{q} + \frac{1}{q'}\right)\\
& = & \frac{1}{v} \cdot 1 + \frac{1}{u} \cdot 1\\
& = & 1.
\end{eqnarray*}
Let us also observe that:
\begin{eqnarray}
\frac{\gamma^2}{pv} + \frac{\alpha^2}{qu} + \frac{1}{pquv} & = &
\frac{1}{uvr}. \label{J2}
\end{eqnarray}
Indeed, we have:
\begin{eqnarray*}
& \ & \frac{\gamma^2}{pv} + \frac{\alpha^2}{qu} + \frac{1}{pquv}\\
& = & \frac{p'}{u^2q'r'} \cdot \frac{1}{pv} + \frac{q'}{v^2p'r'}
\cdot \frac{1}{qu} + \frac{1}{pquv}\\
& = & \frac{1}{uvr'}\left(\frac{p'}{puq'} + \frac{q'}{qvp'} +
\frac{1}{pq} \cdot r'\right)\\
& = & \frac{1}{uvr'}\left[\frac{p'}{puq'} + \frac{q'}{qvp'} +
\frac{1}{pq} \cdot \left(\frac{p'}{u} + \frac{q'}{v}\right)\right]\\
& = & \frac{1}{uvr'}\left[\frac{p'}{puq'} + \frac{q'}{qvp'} +
\frac{p'}{pqu} + \frac{q'}{pqu}\right]\\
& = & \frac{1}{uvr'}\left[\frac{p'}{pu}\left(\frac{1}{q'} +
\frac{1}{q}\right) + \frac{q'}{qv}\left(\frac{1}{p'} +
\frac{1}{p}\right)\right]\\
& = & \frac{1}{uvr'}\left[\frac{p'}{pu} \cdot 1 + \frac{q'}{qv}
\cdot 1\right]\\
& = & \frac{1}{uvr'}\left[\frac{p/(p - 1)}{pu} + \frac{q/(q -
1)}{qv}
\right]\\
& = & \frac{1}{uvr'}\left[\frac{1}{u(p - 1)} + \frac{1}{v(q -
1)}\right]\\
& = & \frac{1}{uvr/(r - 1)}\cdot \frac{1}{r - 1}\\
& = & \frac{1}{uvr}.
\end{eqnarray*}
We go back to the denominator of formula (\ref{formula}) and apply
the Jensen inequality for the strictly concave downward function
$L(x) = \ln(x)$. From (\ref{J1}) we conclude that:
\begin{eqnarray*}
\ln\left(\frac{1}{pv} \cdot S + \frac{1}{qu} \cdot T + \beta^2 \cdot
1\right) & \geq & \frac{1}{pv}\ln(S) + \frac{1}{qu}\ln(T) +
\beta^2\ln(1)\\
& = & \ln\left(S^{1/(pv)}T^{1/{(qu)}}\right).
\end{eqnarray*}
Exponentiating both sides of this inequality and then rasing them to
the power $1/r'$, we obtain:
\begin{eqnarray}
\left(\frac{1}{pv} \cdot S + \frac{1}{qv} \cdot T + \beta^2 \cdot
1\right)^{1/r'} & \geq & S^{1/(pvr')}T^{1/(qur')}. \label{r'}
\end{eqnarray}
Formula (\ref{J2}) shows that in order to obtain a convex
combination in the sum:
\begin{eqnarray*}
& \ & \frac{\gamma^2}{pv}\cdot S + \frac{\alpha^2}{qu}\cdot T +
\frac{1}{pquv} \cdot ST
\end{eqnarray*}
we need first to multiply it by $K := uvr$. After doing this,
applying again the strict concavity of the function $\ln$, we
obtain:
\begin{eqnarray*}
\ln\left(K\frac{\gamma^2}{pv}\cdot S + K\frac{\alpha^2}{qu}\cdot T +
K\frac{1}{pquv} \cdot ST\right) & \geq & K\frac{\gamma^2}{pv}\ln(S)
+ K\frac{\alpha^2}{qu}\ln(T) + K\frac{1}{pquv}\ln(ST)\\
& = & \ln\left(S^{\frac{K\gamma^2}{pv} +
\frac{K}{pquv}}T^{\frac{K\alpha^2}{qu} + \frac{K}{pquv}}\right).
\end{eqnarray*}
This inequality is equivalent to:
\begin{eqnarray}
\left(\frac{\gamma^2}{pv}\cdot S + \frac{\alpha^2}{qu}\cdot T +
\frac{1}{pquv} \cdot ST\right)^{1/r} & \geq &
\frac{1}{K^{1/r}}S^{\frac{K\gamma^2}{pvr} +
\frac{K}{pquvr}}T^{\frac{K\alpha^2}{qur} +
\frac{K}{pquvr}}\nonumber\\
& = & \frac{1}{(uvr)^{1/r}}S^{\frac{u\gamma^2}{p} + \frac{1}{pq}}
T^{\frac{v\alpha^2}{q} + \frac{1}{pq}}. \label{r}
\end{eqnarray}
Going now back to the formula (\ref{formula}) and using the
inequalities (\ref{r'}) and (\ref{r}), we obtain:
\begin{eqnarray*}
& \ & \frac{p^{1/p}q^{1/q}}{r^{1/r}} \cdot \frac{s^{1/p}t^{1/q}}{(s
+ t + \beta^2)^{1/r'}(\gamma^2s + \alpha^2t + st)^{1/r}} \nonumber\\
& = & \frac{1}{v^{1/p}u^{1/q}r^{1/r}} \cdot
\frac{S^{1/p}T^{1/q}}{\left(\frac{1}{pv} \cdot S + \frac{1}{qv}
\cdot T + \beta^2 \cdot
1\right)^{1/r'}\left(\frac{\gamma^2}{pv}\cdot S +
\frac{\alpha^2}{qu}\cdot T + \frac{1}{pquv} \cdot ST\right)^{1/r}}\\
& \leq & \frac{1}{v^{1/p}u^{1/q}r^{1/r}} \cdot
\frac{S^{1/p}T^{1/q}}{S^{1/(pvr')}T^{1/(qur')}
\frac{1}{(uvr)^{1/r}}S^{(u\gamma^2)/p + 1/(pq)} T^{(v\alpha^2)/q +
1/(pq)}}.
\end{eqnarray*}
The exponent of $S$ in the denominator of the last fraction is:
\begin{eqnarray*}
\frac{1}{pvr'} + \frac{p'}{upq'r'} + \frac{1}{pq} & = &
\frac{1}{pr'}\left[\frac{1}{v} + \frac{p'}{uq'} + \frac{1}{q} \cdot
r'\right]\\
& = & \frac{1}{pr'}\left[\frac{1}{v} + \frac{p'}{uq'} +
\frac{1}{q}\left(\frac{p'}{u} + \frac{q'}{v}\right)\right]\\
& = & \frac{1}{pr'}\left[\frac{1}{v} +
\frac{p'}{u}\left(\frac{1}{q'} + \frac{1}{q}\right) + \frac{q'}{v}
\cdot \frac{1}{q}\right]\\
& = & \frac{1}{pr'}\left[\frac{1}{v} + \frac{p'}{u} \cdot 1 +
\frac{q'}{v}\left(1 - \frac{1}{q'}\right)\right]\\
& = & \frac{1}{pr'}\left[\frac{1}{v} + \frac{p'}{u} + \frac{q'}{v} -
\frac{1}{v}\right]\\
& = & \frac{1}{pr'} \cdot r'\\
& = & \frac{1}{p}.
\end{eqnarray*}
Similarly, the exponent of $T$ in the denominator of the same
fraction is $1/q$. Hence, for all $s$, $t > 0$, we have:
\begin{eqnarray}
\frac{p^{1/p}q^{1/q}}{r^{1/r}} \cdot \frac{s^{1/p}t^{1/q}}{(s + t +
\beta^2)^{1/r'}(\gamma^2s + \alpha^2t + st)^{1/r}} & \leq & v^{(1/r)
- (1/p)}u^{(1/r) - (1/q)} \cdot
\frac{S^{1/p}T^{1/q}}{S^{1/p}T^{1/q}} \nonumber\\
& = & v^{(1/r) - (1/p)}u^{(1/r) - (1/q)}. \label{supremum}
\end{eqnarray}
The equality in (\ref{supremum}) holds if and only if $S = T = 1$,
due to the strict concavity of the function $y = \ln(x)$. This is
equivalent to $s = 1/(pv)$ and $t = 1/(qu)$, since $S = (pv)s$ and
$T = (qu)t$. Thus, we have:
\begin{eqnarray*}
\sup_{s > 0, t > 0}\left\{\frac{p^{1/p}q^{1/q}}{r^{1/r}} \cdot
\frac{s^{1/p}t^{1/q}}{(s + t + \beta^2)^{1/r'}(\gamma^2s + \alpha^2t
+ st)^{1/r}}\right\} & = & v^{(1/r) - (1/p)}u^{(1/r) - (1/q)}.
\end{eqnarray*}
Going back to the inequality (\ref{L2}), we conclude that:
\begin{eqnarray*}
& \ & \left\{\int_{{\mathbb
R}^d}\left|\left[\Gamma\left(\frac{1}{\sqrt{u}}\right)\varphi
\diamond \Gamma\left(\frac{1}{\sqrt{v}}\right)\psi\right]
\left(\frac{x}{\sqrt{uv}}\right)e^{-\frac{\langle x, x
\rangle}{2uv}\cdot\frac{1}{r}}\right|^rd_Nx\right\}^{1/r}\\
& \leq & \left\{\int_{{\mathbb R}^d}\left[\int_{{\mathbb R}^d}|f(x -
y)| \cdot |g(y)| \cdot e^{-\frac{1}{2p'q'r'}\left\langle
\frac{q'}{v}x - r'y, \frac{q'}{v}x -
r'y\right\rangle}d_Ny\right]^rd_Nx\right\}^{1/r}\\
& \leq & \sqrt{v^{\frac{d}{r} - \frac{d}{p}}u^{\frac{d}{r} -
\frac{d}{q}}}
\||f|\|_p\||g|\|_q\\
& = & v^{\frac{d}{2r} - \frac{d}{2p}}u^{\frac{d}{2r} -
\frac{d}{2q}}\left[\int_{{\mathbb
R}^d}\left|\varphi\left(\frac{x}{\sqrt{v}}\right)\right|^pe^{-\frac{\langle
x, x \rangle}{2v}}d_Nx\right]^{1/p}\left[\int_{{\mathbb
R}^d}\left|\psi\left(\frac{x}{\sqrt{u}}\right)\right|^qe^{-\frac{\langle
x, x \rangle}{2u}}d_Nx\right]^{1/q}.
\end{eqnarray*}
This inequality is equivalent to:
\begin{eqnarray*}
& \ & (uv)^{-\frac{d}{2r}}\left\{\int_{{\mathbb
R}^d}\left|\left[\Gamma\left(\frac{1}{\sqrt{u}}\right)\varphi
\diamond \Gamma\left(\frac{1}{\sqrt{v}}\right)\psi\right]
\left(\frac{x}{\sqrt{uv}}\right)\right|^re^{-\frac{\langle x, x
\rangle}{2uv}}d_Nx\right\}^{1/r}\\
& \leq & v^{-\frac{d}{2p}}\left[\int_{{\mathbb
R}^d}\left|\varphi\left(\frac{x}{\sqrt{v}}\right)\right|^pe^{-\frac{\langle
x, x
\rangle}{2v}}d_Nx\right]^{1/p}u^{-\frac{d}{2q}}\left[\int_{{\mathbb
R}^d}\left|\psi\left(\frac{x}{\sqrt{u}}\right)\right|^qe^{-\frac{\langle
x, x \rangle}{2u}}d_Nx\right]^{1/q}.
\end{eqnarray*}
Making now the changes of variable $x' := x/\sqrt{uv}$ in the left,
and $x_1 := x/\sqrt{v}$ and $x_2 := x/\sqrt{u}$ in the right, and
moving back to the Gaussian norms, we obtain:
\begin{eqnarray*}
\left\|\Gamma\left(\frac{1}{\sqrt{u}}\right)\varphi \diamond
\Gamma\left(\frac{1}{\sqrt{v}}\right)\psi\right\|_r & \leq &
\|\varphi\|_p \cdot \|\psi\|_q.
\end{eqnarray*}
To prove the inequality in the infinite dimensional case we proceed
in the following way. Let $H$ be a separable Gaussian Hilbert space.
Let $\{e_n\}_{n \geq 1}$ be an orthonormal basis of centered
Gaussian random variables from $H$. For all $d \geq 1$, let
\begin{eqnarray*}
H_d & := & {\mathbb C}e_1 \oplus {\mathbb C}e_2 \oplus \cdots \oplus
{\mathbb C}e_d.
\end{eqnarray*}
Let ${\mathcal F}_d := {\mathcal F}(H_d)$, i.e., the smallest
sigma--algebra with respect to which $e_1$, $e_2$, $\dots$, $e_d$
are measurable. If $\varphi \in L^{p}(\Omega$, ${\mathcal F}(H)$,
$P)$ and $\psi \in L^{q}(\Omega$, ${\mathcal F}(H)$, $P)$, and if we
denote the conditional expectations of $\varphi$ and $\psi$, with
respect to ${\mathcal F}_d$, by $\varphi_d$ and $\psi_d$,
respectively, i.e., $\varphi_d := E[\varphi \mid {\mathcal F}_d]$
and $\psi_d := E[\psi \mid {\mathcal F}_d]$, then it is not hard to
see that:
\begin{eqnarray}
E[\Gamma(1/\sqrt{u})\varphi \mid {\mathcal F}_d] & = &
\Gamma(1/\sqrt{u})\varphi_d,
\end{eqnarray}
\begin{eqnarray}
E[\Gamma(1/\sqrt{v})\psi \mid {\mathcal F}_d] & = &
\Gamma(1/\sqrt{v})\psi_d,
\end{eqnarray}
and
\begin{eqnarray}
E[\Gamma(1/\sqrt{u})\varphi \diamond \Gamma(1/\sqrt{v})\psi \mid
{\mathcal F}_d] & = & \Gamma(1/\sqrt{u})\varphi_d \diamond
\Gamma(1/\sqrt{v})\psi_d.
\end{eqnarray}
Since $\{{\mathcal F}_d\}_{d \geq 1}$ is an increasing family of
sigma--algebras and the sigma--algebra generated by them is
${\mathcal F}(H)$, using the Martingale Convergence Theorem we
conclude that:
\begin{eqnarray*}
E[\Gamma(1/\sqrt{u})\varphi \diamond \Gamma(1/\sqrt{v})\psi \mid
{\mathcal F}_d] & \to & \Gamma(1/\sqrt{u})\varphi \diamond
\Gamma(1/\sqrt{v})\psi,
\end{eqnarray*}
\begin{eqnarray*}
E[\varphi \mid {\mathcal F}_d] & \to & \varphi,
\end{eqnarray*}
and
\begin{eqnarray*}
E[\psi \mid {\mathcal F}_d] & \to & \psi,
\end{eqnarray*}
as $d \to \infty$, both almost surely and in $L^1(\Omega$,
${\mathcal F}(H)$, $P)$. Using now the fact that the result is true
in the finite dimensional case and Fatou's Lemma as before, we
conclude that:
\begin{eqnarray*}
\left\|\Gamma(1/\sqrt{u})\varphi \diamond
\Gamma(1/\sqrt{v})\psi\right\|_r & \leq &
\|\varphi\|_p\cdot\|\psi\|_q.
\end{eqnarray*}
\end{proof}


\begin{thebibliography}{3}
\bibitem{b1975} Beckner, W.: Inequalities in Fourier Analysis,
{\em Ann. of Math.} {\bf 102}, 1975, 159--182.
\bibitem{bl1976} Brascamp, H.J. and Lieb, E.H.: Best constants in
Young's inequality, its converse, and its generalization to more
than three functions, {\em Adv. in Math.} {\bf 20}, 1976, 151--173.
\bibitem{cc09} Carlen, E. and Cordero-Erausquin, D.: Subadditivity
of the entropy and its relation to Brascamp-Lieb type inequalities,
{\em Geom. Funct. Analysis} {\bf 19} (2009), 373--405.
\bibitem{cll04} Carlen, E., Lieb, E., and Loss, M.: A sharp form of
Young's inequality on {\bf $S^N$} and related entropy inequalities,
{\em Jour. Geom. Analysis} {\bf 14} (2004), 487--520.
\bibitem{jan97} Janson, S:
{\it Gaussian Hilbert Spaces}, Cambridge Tracts in Math. {\bf 129}:
Cambridge University Press, 1997.
\bibitem{k1996} Kuo, H.--H.: {\em White Noise Distribution Theory},
Probability and Stochastic Series, CRC Press, Inc. 1996.
\bibitem{kss2002} Kuo, H.--H., Sait\^o, K., and Stan, A.I.: A
Hausdorff--Young inequality for white noise analysis, in {\em
Quantum Information IV}, T. Hida and K. Sait\^o, Eds., pp. 115--126,
World Scientific, River Edge, NJ, USA, 2002.
\bibitem{ls2008} Lanconelli, A. and Stan, A.I.: H\"older Type
Inequalities for Norms of Wick Products, {\em J. of Appl. Math.
Stoch. An.}, vol. 2008, Article ID 254897, 22 pages, 2008.
doi:10.1155/2008/254897.
\bibitem{ls2010} Lanconelli, A. and Stan, A.I.: Some inequalities
for norms of Gaussian Wick products, {\em Stoch. An. Appl.}, Vol.
28, Issue 3, 2010, 523--539.
\bibitem{l1990} Lieb, E.H.: Gaussian kernels have only Gaussian
maximizers, {\em Invent. Math.} {\bf 102}, 1990, 179--208.
\bibitem{ll2001} Lieb, E.H. and Loss, M.: {\em Analysis} Second Edition,
Graduate Studies in Mathematics, {\bf 14}, AMS, Providence, Rhode
Island, 2001.
\bibitem{o1994} Obata, N.: {\em White Noise Calculus on Fock Space},
Lecture Notes in Math. {\bf 1577}, Springer--Verlag, 1994.
\end{thebibliography}
\end{document}